\newtheorem{theorem}{Theorem}[section]
\newtheorem{lemma}[theorem]{Lemma}
\newtheorem{proposition}[theorem]{Proposition}
\newtheorem{remark}[theorem]{Remark}
\title{\Large\bf{Time adaptive numerical solution of a highly degenerate diffusion-reaction biofilm model based on regularisation}}
\author{Maryam Ghasemi and Hermann J. Eberl\\ \\
Department of Mathematics and Statistics \\
University of Guelph, Guelph, ON, N1G 2W1, Canada\\
}
\date{}
\begin{document}
\baselineskip=24pt
\maketitle

\begin{abstract}
We consider a quasilinear degenerate diffusion-reaction system that describes biofilm formation. The model exhibits two non-linear effects: a power law degeneracy as one of the dependent variables vanishes and a super diffusion singularity as it approaches unity. Biologically relevant solutions are characterized by a moving interface and gradient blow-up there.

Discretisation of  the PDE in space by a standard Finite Volume scheme leads to a singular system of ordinary differential equations. We show that regularisation of this system allows the application of error controlled adaptive integration techniques to solve the underlying PDE. This overcomes the major limitation of existing methods for this type of problem which work with fixed time-steps.

We apply the resulting numerical method to study the effect of signal diffusion in the aqueous phase on quorum sensing induction in a biofilm colony.

\textbf{Keywords}: Biofilm, \and Degenerate diffusion-reaction equation, \and Quorum sensing, \and Regularization, \and Semi-discretization, \and Time adaptivity

\textbf{MSC}:  {\it primary: 35K65, 65M08; secondary: 68U20,92D25}
\end{abstract}

\section{Introduction}
\label{intro}

Bacterial biofilms are microbial communities on immersed surfaces, embedded in layers of self-produced extracellular polymeric substances (EPS), which protect the sessile cells against mechanical or chemical washout \cite{DFF:1996,HCS:2004,IS:2014,LL:2012,MYT:2008,WK:2000}. Biofilms are prevalent in natural, industrial and hospital settings. Depending on the context, they can be harmful or beneficial. For example biofilms can lead to corrosion problems in fresh water pipes, and oil pipelines \cite{SC:2001}; foremost cause of failure of medical implants are bacterial infections caused by biofilm formation \cite{AFS:2000,SLA:2008}; biofilms can lead to crop disease in plants \cite{DC:2002}; dental plaque is an oral biofilm on teeth that causes dental disease \cite{ABO:2007}. On the other hand, the adsorption and absorption properties and enhanced mechanical stability of biofilms make them advantageous to environmental engineering technologies, e.g. for waste water treatment, remediation of contaminated soil, and elimination of petroleum oil from contaminated oceans or marine systems \cite{WEV:2006}.

Although the term biofilm suggests a homogeneous film-like layer, biofilms on the meso-scale ($10\mu m \sim 1 mm$, the~actual~biofilm~scale) in reality are spatially heterogeneous assemblages of colonies which may merge as they grow and expand \cite{LHEKP:2002}. The architecture of a biofilm depends on some biological factors such as maximum cell density, specific growth rate, and local nutrient availability. 
In  environments with unlimited amount of substrate, biofilms tend to grow quickly and form homogeneous, compact, thick layer. On the other hand, biofilms in nutrient limited regimes tend to form in patchy heterogeneous structures \cite{EPV:2001}.

Several mathematical models have been proposed in the literature to describe the growth of spatially heterogeneous biofilms, ranging from stochastic individual based models to cellular automata models to deterministic continuum models, cf \cite{KD:2010,LHEKP:2002,WZ:2010,WEV:2006} and the references that they cite. The underlying mathematical principles of these models are quite different and, therefore, the mathematical and computational challenges vary from model to model. Nevertheless, they all show the same qualitative behaviour in predicting the development of biofilm morphologies in response to substrate limitation \cite{WEV:2006}.

An early prototype biofilm growth model has been introduced in \cite{EPV:2001}, and was extended later to account for further biofilm aspects and processes, such as quorum sensing, response to antibiotics, internally triggered dispersal, EPS production, and multispecies systems, e.g. in \cite{EC:2009,ES:2008,EHK:2015,FKH:2010,FKH:2011,KEE:2009,MD:2014,RSE:2015}. The model has been derived both from the view point of biofilms as spatially structured populations \cite{KhHE:2009} and as a fluid \cite{FKH:2010,NS:2016}, thus reflecting the ecological-mechanical duality of biofilms. In its basic form as a biofilm growth model, it consists of a density-dependent diffusion-reaction equation for biomass that is coupled with a semi-linear diffusion-reaction equation for growth limiting nutrients. The biomass equation shows two interacting non-linear diffusion effects: (i) the diffusion coefficient vanishes where the biomass vanishes as in the porous medium equation, and  (ii) a super-diffusion singularity as the dependent variable in the diffusion coefficient approaches the known maximum density. 
The interplay of both effects assures that the solution of the biomass equation is bounded by the maximum cell density \cite{EZE:2009}, that spatial expansion of the biofilm only takes place if locally no space is available to produce new biomass (i.e. a volume filling effect), and that initial data with compact support have solutions with compact support \cite{EPV:2001}. The latter property is a characteristic property of the porous medium equation and related problems. In our application this expresses itself as a sharp biofilm/water interface, at which the biomass gradient blows up. Since  biofilm colonies grow, their interfaces with the aqueous phase are not stationary but change over time. Eventually neighbouring colonies can merge into a bigger colony, in which case their interfaces merge and dissolve.

Interface problems of this type pose difficulties for numerical and analytical methods.
The steep biomass gradients at the interface can lead to interface smearing or spurious oscillations in numerical solutions. The singularity in the density-dependent diffusion coefficient leads to a blow up of model coefficients and forces the numerical method to work with very small time-steps.  
Several methods have been proposed in the literature for the numerical treatment of this highly non-linear degenerate model.

In \cite{EPV:2001} a hybrid time-integration strategy was applied. The slower biomass processes were solved explicitly with a Runge-Kutta-Fehlberg method, whereas the faster nutrient processes were solved implicitly. This required inefficiently small time steps for the biomass equation when the biomass density approaches maximum density somewhere. 
A different idea was explored in \cite{DE:2006} in which the degenerate equation is transformed into new dependent variables such that the spatial operator converted into the Laplacian operator and all non-linear effects appeared in the time-derivative. 
A fully implicit, fixed time-step method for the new equation could handle the diffusion singularity effect very well, but emphasized interface oscillations. 
In \cite{KE:2006} a fully adaptive in space and time method was proposed in the 1D case, based on a weak moving frame formulation.
This method explicitly tracked the biofilm/water interface and was able to deal well with the porous medium degeneracy but had problems with the super-diffusion singularity. In \cite{KE:2008} the transformation idea of \cite{DE:2006} was combined with the moving frame approach of \cite{KE:2006}. This method worked very well in 1D but in the 2D case becomes cumbersome; moreover, it was not obvious how it could be adapted for multi-species biofilm systems. 

A semi-implicit method that is based on a non-local (in time) representation of the non-linear density dependent  biomass flux is described in \cite{ED:2007,SH:2012}. This method can be efficiently parallelised for shared memory architectures \cite{ME:2010} and has been used for several extensions and applications of the underlying biofilm model \cite{EC:2009,ES:2008,EHK:2015,FKH:2010,FKH:2011,KEE:2009,RE:2014}. This low order method has shown to be robust and to handle the steep biomass gradients at the biofilm/water interface reasonably well.  
This method has also been studied and used for a variant of the biofilm model with a slightly different biomass diffusion coefficient in \cite{NS:2016}.
Minor adaptations and extensions of this approach were suggested in \cite{MD:2014,MH:2012}, however without major documented improvement. 
A related fully-implicit first order time integration method was proposed in \cite{E:2016} for a cellulosic biofilm system in which nutrients are stationary and do not diffuse. It uses a fixed-point iteration at each time-step to solve the resulting non-linear system. This method reduces to the semi-implicit approach of \cite{ED:2007} as a special case if only one iteration step is carried out. In \cite{BLV} the implicit trapezoidal rule was used for time integration in the spatially one-dimensional case and explicit variants were proposed in \cite{MR:2013,S:2015}, however, without demonstrated gain.
 All these methods use fixed time steps and because of the low regularity of the solution they do not possess error control properties for time integration. Choosing the time step such that an acceptable trade-off between accuracy and compute time is achieved, and that a breakdown of the method near the super-diffusion singularity is avoided, requires some user experience. 

In this paper we want to overcome the constraints around time integration. 
Our problem at hand has two sources of stiffness. One stems from the disparity of characteristic time scales of substrate diffusion and uptake on one hand, and biomass growth on the other hand \cite{EPV:2001}. The other one comes into play when the biomass density approaches the super-diffusion singularity. For stiff ordinary differential equations with sufficiently smooth solutions, a host of such methods exists. Due to the singularity in the biomass diffusion coefficient and the resulting missing regularity of solutions, it is not {\it a priori} clear whether these can be applied for the time integration of the biofilm equation. Furthermore, it will be important that the resulting numerical scheme is able to deal with sharp biomass gradients at the biofilm/water interface without introducing spurious oscillation or extensive interface smearing. To address these questions, and to propose a fully adaptive method for time integration of the biofilm model is our main objective. 

We will demonstrate the utility of this method by studying a question that arises in the context of quorum sensing in biofilms.
Bacterial cells produce and secrete chemical signals to communicate with each other, and measure the concentration of the signal molecules in their environment. This mechanism is known as quorum sensing. 
Once a critical signal concentration is reached, changes in gene expressions are induced and bacteria collectively synchronize their behaviour \cite{FKH:2011,HKM:2007}. Since the producer cells respond to their own signal these are sometimes referred to as autoinducers (e.g., acyl-homoserine lactones (AHL) in Gram-negative bacteria).
In most bacterial autoinducer systems, the autoinducer synthase gene is upregulated, i.e a positive feedback stimulus is induced, leading to production of AHL molecules at an increased rate \cite{FKH:2011}.

The name quorum sensing stems from an interpretation of autoinduction as a mechanism to estimate cell density. However, in environments that are not completely mixed, such as spatially structured biofilm systems, the autoinducer concentration is affected by transport of chemical signals in the aqueous environment. In fact they might use the autoinducer concentration as an estimate of any of the factors that can affect the concentration of this molecule, such as diffusion limitation \cite{HKM:2007}. This has lead to the interpretation of quorum sensing as diffusion sensing  and a mechanism to survey the environment \cite{R:2002}.
An important question to ask is then how the environment affects the time to autoinduction in a biofilm colony, and the size of the colony that is required for autoinduction, i.e. for the signal concentration to exceed induction threshold. 

In a recent study  \cite{TSZ:2014} it was investigated experimentally using a synthetic biofilm matrix and mathematically using a simple diffusion model, how much autoinducer needs to be added instantaneously in a colony center to achieve concentrations in the biofilm above induction threshold for different colony sizes.  Our goal is to build on this using a model that accounts also for biofilm growth and dynamic autoinducer production, and that also considers diffusion of signal molecules into the environment. Related, older studies are \cite{CH:2003,CKMP:2002}, where the relationship between biofilm size and induction was studied in a one-dimensional setting, using a Wanner-Gujer type biofilm growth model. The 1D setting in that study, however, is limiting in that it does not allow to investigate effects of signal diffusion parallel to the substratum, and thus effects of neighboring colonies or confined domains. How convective transport of signal molecules due to bulk flow hydrodynamics affects washout of autoinducers and the onset of induction in narrow flow channels, on the other hand, was simulated in \cite{FKH:2010,VSC:2010}; the presence of background flow added another complexity that does not allow to hone in on aspects of diffusion sensing.

Our paper is organized as follows: In Section \ref{mathmodel} we introduce the prototype biofilm model and obtain its semi-discrete version by applying a Finite Volume Method to discretize the partial differential equations in space. In Section \ref{sec3} we analyze the semi-discrete model; in particular we show, using regularization and the method of super and sub solutions, that the solution of the resulting ODE never attains the singularity and is thus sufficiently smooth to apply standard numerical methods for time integration, such as embedded Rosenbrock-Wanner methods, which we will use.
In order to demonstrate that the resulting numerical method is able to handle gradient blow-up at the biofilm/water interface, we apply in section \ref{sec4} the numerical scheme to a simpler problem (without super-diffusion singularity), for which an exact solution is known, namely the porous medium equation with linear growth. 
In section \ref{sec5} we investigate the behaviour of our numerical method for the full prototype biofilm growth model. In particular we give a grid refinement study and explore quantitatively the dependence of the numerical method on the regularization parameter that was introduced for analytical treatment.
In section \ref{DS}, finally we present an illustrative application of the method in the context of quorum sensing. Concluding remarks are provided in Section \ref{conclusion}.


\section{Mathematical Model}\label{mathmodel}

\subsection{Governing equations}

The mathematical model studied in this paper was originally introduced in \cite{EPV:2001}. It is formulated as a density-dependent degenerate diffusion-reaction equation over domain $\Omega\subset \mathbb{R}^2$. The dependent variables are the volume fraction occupied by biomass, $u$, and the concentration of a growth limiting nutrient, $c$.  Domain $\Omega$ is divided into region $\Omega_1(t)=\left\lbrace (x,y)\in \Omega \subset \mathbb{R}^2: u(t,x,y)=0\right\rbrace$ that describes the aquatic phase (bulk liquid, channels and pores of a biofilm) without biomass, and region $\Omega_2(t)=\left\lbrace (x,y)\in \Omega \subset \mathbb{R}^2: u(t,x,y)>0\right\rbrace$, which is the actual biofilm with positive density, cf. Figure \ref{figbio2}. 

\begin{figure}[h!]
\centering
\includegraphics[scale=0.4]{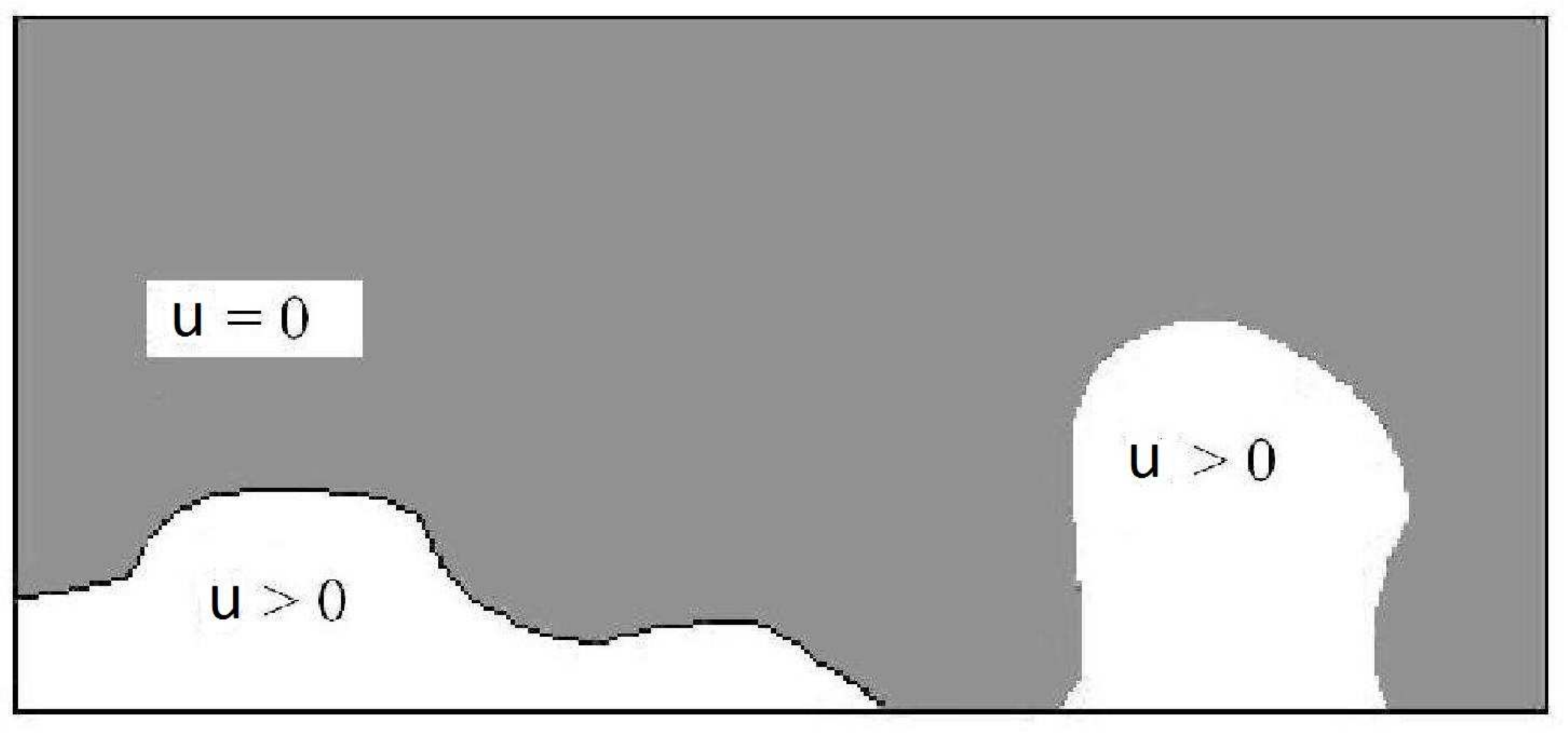}
\caption{The domain $\Omega\subset \mathbb{R}^2$ with liquid region $\Omega_1(t)=\left\lbrace (x,y)\in \Omega \subset \mathbb{R}^2: u(t,x,y)=0\right\rbrace$ and biofilm region $\Omega_2(t)=\left\lbrace (x,y)\in \Omega \subset \mathbb{R}^2: u(t,x,y)>0\right\rbrace$.}
\label{figbio2}
\end{figure}

The independent variables $t\geq0$ and $(x,y)\in \Omega$ denote here time and spatial location, respectively. Both regions are separated by the biofilm/water interface $\Gamma(t) =\partial\Omega_{1}\cap\partial\Omega_{2}$ which changes as the biofilm grows, i.e. as $u$ changes.

As is usual in mathematical models of biofilms, the EPS that is produced by the bacteria is subsumed in the biofilm volume fraction in this prototype biofilm model. An extension of this model that accounts for EPS explicitly has been proposed, for example, in \cite{FKH:2011}.

The model in non-dimensionalised form reads \cite{EPV:2001}:
\begin{equation}\left\{%
\begin{array}{lllll}
\frac{\partial u}{\partial t}=\nabla(D(u)\nabla u)+\frac{c}{K_{U}+c}u-ku,\\
\frac{\partial c}{\partial t}=d_c\Delta c-\frac{\nu_U c}{K_{U}+c}u,\label{21}
\end{array}%
\right.
\end{equation}
$D(u)$ is a density-dependent diffusion coefficient defined as:
\begin{equation}\label{22}
D(u)=\delta\frac{u^{\alpha}}{(1-u)^{\beta}},~\alpha,\beta\geq1,~d_c\gg\delta>0.
\end{equation}
Diffusion coefficient $D(u)$ shows two non-linear effects: (i) a power law degeneracy as in the porous medium equation, i.e. $D(u)$ vanishes as $u$ vanishes and (ii) a super diffusion singularity as $u$ approaches to unity. The porous medium degeneracy, $u^{\alpha}$, guarantees that the biofilm does not spread notably if the biomass density is small, $u\ll 1$, and it is also responsible for the formation of a sharp interface between biofilm and surrounding liquid, i.e. initial data with compact support lead to solutions with
compact support. The second effect (ii) enforces the solution to be bounded by unity \cite{ED:2007,WEV:2006}. This is counteracted by the degeneracy as $u = 0$ at the interface. Consequently, $u$ squeezes in the biofilm region and approaches its maximum value 1. Hence, the interaction of both non-linear diffusion effects with the growth term is needed to describe spatial biomass spreading \cite{ED:2007}.
Diffusion of the dissolved nutrient is assumed to be Fickian, i.e. $d_c$ is a constant. Biomass spreading is much slower than diffusion of dissolved substrate, thus the biomass motility coefficient, $\delta$, is several orders of magnitude smaller than the substrate diffusion coefficient. 

In (\ref{21}), parameter $k$ is the cell lysis rate, $K_U$ the Monod half saturation concentration, and $\nu_U$ the maximum substrate uptake rate.

To study the mathematical model (\ref{21}) we restrict ourselves to the rectangular domain $\Omega=[0,L]\times[0,H]$. The substratum, on which biofilm colonies form is the bottom boundary, $y=0$. We assume the substratum is impermeable to biomass and dissolved substrate. At the lateral boundaries, $x=0$ and $x=L$, we assume a symmetry boundary condition for both dependent variables, which allows us to view the domain as a part of a continuously repeating larger domain. At the top boundary, $y=H$, we pose a homogeneous Dirichlet condition for the biomass and a Robin condition for the nutrient which reflects that the substrate is added to the system through this segment of the domain boundary. Thus the imposed boundary conditions on domain $\Omega=[0,L]\times[0,H]$ are defined as:
 \begin{equation} \label{23}
\left\{%
\begin{array}{lllll}
 &&\partial_nu=\partial_nc=0~ \ \mbox{at} \ x=0, x=L~~~\mbox{and}~~~y=0,\\
 &&u=0,~~~~c+\lambda\partial_n c=1~\ \mbox{at} \ y=H,
 \end{array}%
\right.
\end{equation}
Here $\lambda$ can be understood as the external concentration boundary layer thickness, and $\partial_n$ denotes the outward normal derivative. The concentration boundary layer is introduced to mimic the convective contribution of external bulk flow to substrate supply, i.e. it is
related to the bulk flow velocity, in the sense that a small bulk flow velocity implies a thick concentration boundary layer, while a thin concentration boundary layer represents fast bulk flow \cite{EC:2009}.

For the reader's convenience we recall the main result of \cite{EZE:2009} that studies the longtime behaviour of PDE model (\ref{21}), without proof:
\begin{theorem}[Existence]\label{PDEexist:thm}
Let the initial data $(u_0,c_0)$ satisfy the conditions
\begin{equation}\left\{%
\begin{array}{lllll}
1.~c_0\in L^{\infty}(\Omega)\cap H^1(\Omega),~~0\leq c_0 \leq 1,~c_0\mid_{\partial\Omega}=1,\\
2.~u_0\in L^{\infty}(\Omega),~F(u_0)\in H_0^1(\Omega),\\
3.~u_0\geq0,~\|u_0\|_{\small{L^{\infty}}}<1,\label{26}
\end{array}%
\right.
\end{equation}
where
\begin{equation}
F(u)=F_{\infty}(u):=\int_{0}^{u} \frac{v^{\alpha}}{(1-v)^{\beta}}dv,~0\leq u<1.
\end{equation}\label{27}
Then, there exists a solution $(u(t),c(t))$ of degenerate problem (\ref{21}) (in the sense of distributions) belonging to the following class:
\begin{equation}\left\{%
\begin{array}{lllll}
1.~u,c\in L^{\infty}(\mathbb{R_+}\times\Omega)\cap C([0,\infty),L^2(\Omega)),\\ 
2.~c,F(u)\in L^{\infty}(\mathbb{R_+},H^1(\Omega))\cap C([0,\infty),L^2(\Omega)),\\
3.~0\leq u(t,x,y),c(t,x,y)\leq 1,~\|u\|_{\small{L^{\infty}(R_+\times\Omega})}<1.
\end{array}%
\right.\label{28}
\end{equation}
\end{theorem}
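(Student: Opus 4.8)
The plan is to prove existence by regularisation, followed by uniform a priori estimates and a compactness passage to the limit. Because the diffusion coefficient $D(u)=\delta u^{\alpha}/(1-u)^{\beta}$ simultaneously degenerates at $u=0$ and blows up at $u=1$, the biomass equation is neither uniformly parabolic nor does it have globally defined coefficients, so I would first introduce a family $D_\varepsilon$ that is smooth, bounded, and bounded below by a positive constant, chosen so as to retain the essential barrier at $u=1$ — for instance by lifting the degeneracy at the origin, $u^\alpha\mapsto u^\alpha+\varepsilon$, while capping the singular factor at the level $1-\varepsilon$ — and likewise leaving the Monod term $c/(K_U+c)$ (already bounded and Lipschitz for $c\ge 0$) essentially untouched. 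For each fixed $\varepsilon>0$ the regularised system is uniformly parabolic with globally Lipschitz coefficients, so I would obtain a global smooth solution $(u_\varepsilon,c_\varepsilon)$ by a Schauder fixed-point argument that decouples the two equations: solve the semilinear $c$-equation for a given $u$, solve the quasilinear $u$-equation for the resulting $c$, and iterate, invoking standard linear and quasilinear parabolic theory.

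The core of the argument is a family of a priori bounds uniform in $\varepsilon$. First I would establish nonnegativity $u_\varepsilon,c_\varepsilon\ge 0$ and the upper bound $c_\varepsilon\le 1$ by the maximum principle, using that each reaction term vanishes at its respective zero state, that the uptake term acts as a sink in the $c$-equation, and that the boundary data in (\ref{23}) satisfy $c\le 1$. Next, exploiting that $\nabla F(u)=\delta^{-1}D(u)\nabla u$, so that the biomass flux is exactly $\delta\nabla F(u)$, I would test the $u_\varepsilon$-equation with $F_\varepsilon(u_\varepsilon)$ and the $c_\varepsilon$-equation with $c_\varepsilon$ to derive the energy estimates $F(u_\varepsilon)\in L^\infty(\mathbb{R}_+,H^1)$ and $c_\varepsilon\in L^\infty(\mathbb{R}_+,L^2)\cap L^2_{loc}(\mathbb{R}_+,H^1)$, all bounded independently of $\varepsilon$. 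Finally, I would bound $\partial_t u_\varepsilon$ and $\partial_t c_\varepsilon$ in a negative-order Sobolev space, which sets up the compactness needed for the limit.

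With these bounds in hand, the Aubin--Lions--Simon lemma yields strong $L^2$ convergence of a subsequence of $(u_\varepsilon,c_\varepsilon)$; weak convergence of $\nabla F(u_\varepsilon)$ combined with the strong convergence of $u_\varepsilon$ then identifies the limit flux, and the strong convergence also disposes of the nonlinear Monod reaction term. Passing to the limit in the weak formulation produces a distributional solution $(u,c)$, and the continuity-in-time statements $u,c\in C([0,\infty),L^2)$ together with $F(u)\in L^\infty(\mathbb{R}_+,H^1)$ follow from the uniform estimates and weak lower semicontinuity, placing the solution in the class (\ref{28}).

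The step I expect to be the main obstacle is the \emph{strict} bound $\|u\|_{L^\infty}<1$, preserved uniformly as $\varepsilon\to 0$. In two space dimensions the energy bound $F(u)\in H^1$ does not embed into $L^\infty$, so one cannot simply read off that $1-u$ stays bounded away from zero; moreover, a constant supersolution $\bar u\equiv 1$ fails in general because the growth term can exceed the lysis term. Instead I would exploit the super-diffusion singularity itself: approaching $u=1$ forces $F(u)$ — which diverges like $(1-u)^{1-\beta}$ for $\beta>1$ and logarithmically for $\beta=1$ — to blow up, while the correspondingly large diffusivity spreads biomass so rapidly that accumulation at the maximum density is suppressed. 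Making this quantitative requires constructing an $\varepsilon$-independent supersolution near the singularity, or running a De Giorgi/Moser iteration tuned to the weight $(1-u)^{-\beta}$, to bound $u_\varepsilon$ away from $1$ on each finite time interval, and then verifying that this gap does not close in the limit. This balance between the growth reaction, which pushes $u$ upward, and the singular diffusion, which caps it, is precisely where the characteristic structure of the model is used, and it is the technically delicate heart of the proof.
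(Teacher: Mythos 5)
The paper does not actually prove this theorem: it is recalled from Efendiev, Zelik and Eberl \cite{EZE:2009} explicitly ``without proof'', so there is no in-paper argument to compare against step by step. That said, your strategy --- regularise $D$ by lifting the degeneracy at $u=0$ and capping the singularity near $u=1$, prove nonnegativity and $c\le 1$ by the maximum principle, derive $\varepsilon$-uniform energy bounds by testing with $F_\varepsilon(u_\varepsilon)$ and $c_\varepsilon$, and pass to the limit by Aubin--Lions compactness --- is precisely the route of the cited reference; indeed the regularisation you describe is literally the $D_\epsilon$ of (\ref{25}) that the paper borrows from \cite{EZE:2009} for its own semi-discrete analysis. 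Your observation that the biomass flux is $\delta\nabla F(u)$, so that the natural energy quantity is $F(u_\varepsilon)$ in $H^1$, matches the function class (\ref{28}) in the statement and confirms that the estimates you propose are the right ones.

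The substantive reservation is that the decisive step, the $\varepsilon$-uniform strict bound $\|u_\varepsilon\|_{L^\infty}<1$, is only diagnosed in your write-up, not established: you correctly note that $F(u)\in H^1$ gives no $L^\infty$ control in two dimensions and that a constant supersolution fails against the growth term, and you then name two possible remedies (a singular barrier, or a De Giorgi/Moser iteration weighted by $(1-u)^{-\beta}$) without carrying either out. Since this bound is exactly what keeps the limit flux meaningful and is singled out in the theorem's conclusion, the proposal as written is an accurate plan rather than a complete proof. If you want to close the gap, Section \ref{sec3} of the paper is a useful template: there the discrete counterpart $\mathbf{U}^\epsilon<\mathbf{1}$ is obtained by comparison with an auxiliary problem (\ref{31}) whose explicit stationary supersolution lies strictly below $1$ because the capped diffusivity $\delta\epsilon^{-\beta}$ dominates the bounded reaction term; the continuous analogue of that barrier construction is essentially how \cite{EZE:2009} obtains $u_\epsilon\le 1-\xi$ uniformly for small $\epsilon$, which is the fact your sketch still needs.
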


\begin{theorem}[{Uniqueness}]\label{PDEunique:thm}
Let $(u_1(t),c_1(t))$ and $(u_2(t),c_2(t))$ be two solutions of (\ref{21}) belonging to the class (\ref{28}). Then, the following estimate is valid:
\begin{eqnarray}
&&\| c_1(t)-c_2(t)\|_{\small{L^1(\Omega)}}+\| u_1(t)-u_2(t)\|_{\small{L^1(\Omega)}} \nonumber\\
&&\leq e^{K}t \left( \| c_1(0)-c_2(0)\|_{\small{L^1(\Omega)}}+\| u_1(0)-u_2(0)\|_{\small{L^1(\Omega)}}\right),\label{29}
\end{eqnarray}
for some positive constant $K$ that depends on reaction parameters.
In particular, the solution of (\ref{21}) is unique in the class (\ref{28}).
\end{theorem}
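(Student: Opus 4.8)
The plan is to establish a single differential inequality for the combined $L^1$-norm of the two differences and then close the argument with Gronwall's lemma, from which the stated estimate (\ref{29}) is immediate and uniqueness follows by taking equal initial data. Write $U=u_1-u_2$ and $C=c_1-c_2$. The first observation is that the density-dependent diffusion term is in divergence form, $\nabla\!\cdot(D(u)\nabla u)=\Delta\Phi(u)$ with $\Phi(u)=\int_0^u D(v)\,dv$, so that $\Phi$ is continuous and strictly increasing on $[0,1)$; moreover, by the regularity class (\ref{28}), $\Phi(u_i)\in L^\infty(\mathbb{R}_+,H^1(\Omega))$. Subtracting the two biomass equations and the two nutrient equations yields evolution equations for $U$ and $C$ whose homogeneous boundary data are inherited from (\ref{23}): Neumann on the lateral and bottom segments, homogeneous Dirichlet for $U$ and homogeneous Robin for $C$ at $y=H$.

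First I would treat the biomass equation, where the real difficulty lies. The porous-medium degeneracy $\Phi'(0)=D(0)=0$ forbids testing directly with a regularised sign of $U$; the standard remedy is to test instead with $p_\varepsilon(\Phi(u_1)-\Phi(u_2))$, where $p_\varepsilon$ is a smooth odd nondecreasing approximation of the signum function. Since $\Phi(u_i)\in H^1$, the quantity $w:=\Phi(u_1)-\Phi(u_2)$ lies in $H^1$, and the diffusion contribution integrates by parts to $-\int_\Omega p_\varepsilon'(w)\,|\nabla w|^2\,dx\le 0$, because $p_\varepsilon'\ge0$ and the boundary terms vanish (the normal derivative $\partial_n w$ is zero where $u_i$ satisfies Neumann data, and $w=0$ at the top Dirichlet segment). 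As $\Phi$ is a strict homeomorphism of $[0,1)$ onto its image, $\operatorname{sign}(w)=\operatorname{sign}(U)$, so letting $\varepsilon\to0$ converts the time-derivative pairing into $\tfrac{d}{dt}\|U\|_{L^1}$. The reaction terms are controlled by the splitting $g(c_1)u_1-g(c_2)u_2=g(c_1)U+(g(c_1)-g(c_2))u_2$ with $g(c)=c/(K_U+c)$: since $0\le g\le1$, $g$ is Lipschitz with constant $1/K_U$, and $\|u_2\|_{L^\infty}\le 1$, while the lysis term $-kU$ contributes $-k\|U\|_{L^1}\le0$. This gives $\tfrac{d}{dt}\|U\|_{L^1}\le \|U\|_{L^1}+K_U^{-1}\|C\|_{L^1}$.

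The nutrient equation is easier because its diffusion is Fickian. Testing $\partial_tC=d_c\Delta C-\nu_U(g(c_1)u_1-g(c_2)u_2)$ with a regularised sign of $C$, the linear diffusion term is nonpositive after integration by parts, the homogeneous Robin boundary contribution at $y=H$ having the favourable sign $-|C|/\lambda\le0$; the same Lipschitz splitting bounds the reaction by $\nu_U(\|U\|_{L^1}+K_U^{-1}\|C\|_{L^1})$, so $\tfrac{d}{dt}\|C\|_{L^1}\le \nu_U\|U\|_{L^1}+\nu_U K_U^{-1}\|C\|_{L^1}$. Adding the two inequalities yields $\tfrac{d}{dt}(\|U\|_{L^1}+\|C\|_{L^1})\le K(\|U\|_{L^1}+\|C\|_{L^1})$ for a constant $K$ depending only on the reaction parameters $\nu_U$ and $K_U$, and Gronwall's inequality then produces exactly (\ref{29}).

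The main obstacle is not the algebra but the rigorous justification of the two sign arguments at the level of the weak formulation, and it is concentrated on the degenerate biomass diffusion. Two points need care. First, the pairing $\langle\partial_t U,\,p_\varepsilon(w)\rangle$ is a priori only meaningful in a dual space, so one should either work with Steklov time-averages or, more robustly, carry out the estimate on the nondegenerate approximating problems used to construct the solutions in Theorem \ref{PDEexist:thm} and pass to the limit. Second, one must confirm that the boundary integrals arising from pairing $\Delta w$ with $p_\varepsilon(w)$ genuinely vanish despite the low regularity of $\nabla w$ near the interface where $\Phi(u)$ degenerates; this is precisely where building the test function from $\Phi(u_1)-\Phi(u_2)$ rather than from $u_1-u_2$, together with $\Phi(u_i)\in H^1$, is indispensable. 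Once the limit $\varepsilon\to0$ is secured, the remaining reaction estimates are routine.
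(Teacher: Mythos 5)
The paper itself offers no proof of this theorem: it is explicitly recalled from \cite{EZE:2009} ``without proof'', so there is no in-paper argument to compare yours against. Judged on its own, your sketch is the standard $L^1$-contraction argument for degenerate parabolic systems and is, in outline, the strategy of the cited source: pass to the Kirchhoff transform $\Phi(u)=\int_0^u D(v)\,dv$, test the difference of the biomass equations with a smooth odd monotone approximation $p_\varepsilon$ of the signum applied to $w=\Phi(u_1)-\Phi(u_2)$ (admissible because $F(u_i)\in H_0^1$ by class (\ref{28}) and $p_\varepsilon(0)=0$, and useful because $\operatorname{sign}w=\operatorname{sign}(u_1-u_2)$ by strict monotonicity of $\Phi$), discard the nonpositive term $-\int p_\varepsilon'(w)|\nabla w|^2$, control the Monod nonlinearity through its Lipschitz constant $1/K_U$ together with $0\le u_2\le1$, repeat the easier computation for the linear nutrient diffusion with its favourable Robin sign, add, and apply Gronwall; the resulting constant $K$ can be taken as $(1+\nu_U)\max(1,1/K_U)$, depending only on reaction parameters as claimed, and Gronwall delivers the factor $e^{Kt}$ (the ``$e^{K}t$'' in (\ref{29}) is evidently a typographical slip). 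The two technical points you flag are indeed the only places where the sketch falls short of a complete proof, and your proposed remedies (Steklov averaging, or running the estimate on the nondegenerate approximating problems and passing to the limit) are the right ones. One small correction: for solutions in class (\ref{28}) the equation holds only in the weak form $\int_\Omega\nabla\Phi(u_i)\cdot\nabla\varphi\,dx$, so no boundary integrals from pairing ``$\Delta w$'' with $p_\varepsilon(w)$ ever arise --- the Neumann data are built into the weak formulation and the Dirichlet segment is handled by $p_\varepsilon(w)$ vanishing there --- which removes, rather than complicates, the second worry you raise.
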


An extension of these results to more general boundary conditions is described in \cite{EZE:2009}.

\subsection{Spatial discretization}\label{sec2.2}
We introduce a uniform grid of size $N \times M$ for the rectangular domain $[0,L]\times [0,H]$. Integrating the first equations of (\ref{21}) over each grid cell and using the Divergence Theorem yields
\begin{eqnarray}\label{FVM:eq}
&&\frac{d}{dt}\int_{v_{i,j}}udxdy=\int_{\partial v_{i,j}}J_nds+\int_{v_{i,j}}R(c)udxdy, \quad i=1,...,N,  j=1,...,M\nonumber\\
&&
\end{eqnarray}
where $v_{i,j}$ denotes the domain of the cell with grid index $(i,j)$, $J_n = D(u)\partial_nu$ denotes the outward normal flux across the grid cell boundary, and $R(c)=c/(K_U+c)-k$ stands for the reaction terms.

To evaluate the area integrals in (\ref{FVM:eq}), we evaluate the dependent variables at the center of the grid cells, 
\begin{equation}
U_{i,j}(t):=u(t,x_i,y_j)\approx u\left(t,\left(i-\frac{1}{2}\right)\Delta x,\left(j-\frac{1}{2}\right)\Delta x\right)
\end{equation}
and similarly for the nutrient concentration
\begin{equation}
C_{i,j}(t):=c(t,x_i,y_j)\approx c\left((t,\left(i-\frac{1}{2}\right)\Delta x,\left(j-\frac{1}{2}\right)\Delta x\right).
\end{equation}
for $i=1,...,N$ and $j=1,...,M$ with $\Delta x=L/N=H/M$, and we approximate the integrals
by the midpoint rule.
 Similarly, the line integral in (\ref{FVM:eq}) is evaluated by considering every edge of the grid cell separately, using the midpoint rule.   To this end, the diffusion coefficient $D(u)$ in the midpoint of the cell edge is approximated by arithmetic averaging from the neighbouring grid cell center points, and the derivative of $u$ across the cell edge by a central finite difference.
We get then for the biomass density in the grid cell center the ordinary differential equation
\begin{equation}\label{211}
\frac{d}{dt}U_{i,j}=\frac{1}{\Delta x} \left(J_{i+\frac{1}{2},j}+J_{i-\frac{1}{2},j}+J_{i,j+\frac{1}{2}}+J_{i,j-\frac{1}{2}}\right)+ R_{i,j}U_{i,j},
\end{equation}
where $R_{ij}=C_{i,j}/(K_U+C_{i,j})-k$ and for the fluxes we have, accounting for the boundary conditions,
\begin{equation}\label{210a}
J_{i+\frac{1}{2},j}=\left\{\begin{array}{ll}
                             \frac{1}{2\Delta x}\big(D(U_{i+1,j})+D(U_{i,j})\big)(U_{i+1,j}-U_{i,j})  &  \mbox{for}\quad i<N, \\
                             -\frac{2}{\Delta x} D(0) U_{N,j}  & \mbox{for}\quad i=N,
                           \end{array}\right. 
\end{equation}
\begin{equation}\label{Uflux2:eq}
J_{i-\frac{1}{2},j}=\left\{\begin{array}{ll}
                             0 & \mbox{for}\quad i=1,\\
                             \frac{1}{2\Delta x}\big(D(U_{i,j})+D(U_{i-1,j})\big)(U_{i-1,j}-U_{i,j})  &  \mbox{for}\quad i>1,
                           \end{array}\right.
\end{equation}
\begin{equation}\label{Uflux3:eq}  
J_{i,j+\frac{1}{2}}=\left\{\begin{array}{ll}
                           \frac{1}{2\Delta x}\big(D(U_{i,j+1})+D(U_{i,j})\big)(U_{i,j+1}-U_{i,j}) & \mbox{for}\quad j<M,\\
                           0 & \mbox{for}\quad j=M,
                          \end{array}\right.     
\end{equation}
\begin{equation}\label{Uflux4:eq}
J_{i,j-\frac{1}{2}}=\left\{\begin{array}{ll}
                           0 & \mbox{for}\quad j=1,\\
                          \frac{1}{2\Delta x}\big(D(U_{i,j-1})+D(U_{i,j})\big)(U_{i,j-1}-U_{i,j}),& \mbox{for}\quad j>1.
                          \end{array}\right.     
\end{equation}

The spatial discretization of the equation for the nutrient follows the same principle, with the major difference being that at the top of the domain we have a Robin boundary condition instead of homogeneous Dirichlet condition. That $d_c$ is constant simplifies the flux expressions. We have then
\begin{equation}
\frac{d}{dt}C_{i,j}=\frac{1}{\Delta x} \left(\hat J_{i+\frac{1}{2},j}+ \hat J_{i-\frac{1}{2},j}+\hat J_{i,j+\frac{1}{2}}+ \hat J_{i,j-\frac{1}{2}}\right)- \hat R_{i,j}U_{i,j}
\end{equation}
with
\begin{equation}
\hat R_{i,j} = \frac{\nu_U C_{i,j}}{K_U+C_{i,j}}
\end{equation}
and
\begin{equation}
\hat J_{i+\frac{1}{2},j}=\left\{\begin{array}{ll}
                             \frac{d_c}{\Delta x}(C_{i+1,j}-C_{i,j})  &  \mbox{for}\quad i<N, \\
                             \frac{d_c}{\Delta x}\left(\frac{2 \Delta x}{2\lambda+\Delta x} - C_{i,j} \left(1+\frac{\triangle x-2\lambda}{\Delta x+2\lambda}\right)\right)   & \mbox{for}\quad i=N,
                           \end{array}\right. \label{BCtopC:eq}
\end{equation}
\begin{equation}
\hat J_{i-\frac{1}{2},j}=\left\{\begin{array}{ll}
                             0 & \mbox{for}\quad i=1,\\
                             \frac{d_c}{\Delta x}(C_{i-1,j}-C_{i,j})  &  \mbox{for}\quad i>1,
                           \end{array}\right.
\end{equation}
\begin{equation}      
\hat J_{i,j+\frac{1}{2}}=\left\{\begin{array}{ll}
                           \frac{d_c}{\Delta x}(C_{i,j+1}-C_{i,j}) & \mbox{for}\quad j<M,\\
                           0 & \mbox{for}\quad j=M,
                          \end{array}\right.     
\end{equation}
\begin{equation}
\hat J_{i,j-\frac{1}{2}}=\left\{\begin{array}{ll}
                           0 & \mbox{for}\quad j=1,\\
                          \frac{d_c}{\Delta x}(C_{i,j-1}-C_{i,j}),& \mbox{for}\quad j>1.
                          \end{array}\right.     
\end{equation}

By introducing the lexicographical grid ordering 
\begin{equation}\label{gridordering:eq}
\pi :\left\lbrace 1,...,N\right\rbrace\times \left\lbrace 1,...,M\right\rbrace\rightarrow \left\lbrace1,...,NM\right\rbrace,~(i,j)\mapsto p=(i-1)N+j
\end{equation}
and the vector notation $\textbf{U}=(U_1,...,U_{NM})$, $\textbf{C}=(C_1,...,C_{NM})$
with $U_{p}:=U_{\pi(i,j)}=U_{i,j}$, $C_{p}:=C_{\pi(i,j)}=C_{i,j}$ for $i=1,...,N$, $j=1,...,M$,
we arrive at the coupled system of $2 \cdot N \cdot M$ ordinary differential equations
\begin{equation}\label{220}
\left\{%
\begin{array}{lllll}
\frac{d\textbf{U}}{dt}=\mathfrak{D}(\textbf{U})\textbf{U}+\mathfrak{R}_{U}(\textbf{C})\textbf{U}\\
\frac{d\textbf{C}}{dt}=\mathfrak{L}\textbf{C}-\mathfrak{R}_{C}(\textbf{C})\textbf{U}+\textbf{b}.
\end{array}%
\right.
\end{equation}

\begin{remark}\label{Rem1}
By construction the $NM \times NM$ matrices $\mathfrak{D}(U), \mathfrak{L}$ are symmetric, and weakly diagonally dominant with non-positive main diagonals and non-negative off-diagonals. They contain the spatial derivative terms.
They can efficiently be stored in sparse diagonal format with 4 off-diagonals, with offsets $\pm 1, \pm M$. The matrices $\mathfrak{R}_{U}(\textbf{C}), \mathfrak{R}_{C}(\textbf{C})$ are diagonal matrices which contain the reaction terms; in particular the $p$th entry in $\mathfrak{R}_{C}(\textbf{C})$ vanishes if $C_p=0$. The vector $\textbf{b}$ contains contributions from the Robin boundary conditions in (\ref{BCtopC:eq}); its entries are zero for all grid cells $(i,j)$ with $i<N$, and $b_{\pi(i,j)}=\frac{d_c}{\Delta x}\frac{2}{2\lambda+\Delta x}  >0$ for grid points with $i=N$.
\end{remark}
\begin{remark}
The discretization can easily be extended to more general boundary conditions, such as Robin conditions everywhere, in analogy to (\ref{BCtopC:eq}).
\end{remark}

\section{The regularized semi-discrete system}\label{sec3}

Due to the singularity in the biomass diffusion coefficient (\ref{22}) standard arguments from the theory for ordinary differential equations, such as the Picard-Lindel\"of theorem and the tangent condition cannot be readily applied to study the well-posedness of the ODE system (\ref{220}) and to prove that the biomass volume fraction $\textbf{U}$ is indeed bounded by unity, as is required for physical reasons and for agreement with the underlying PDE (\ref{21}), as per Theorem \ref{PDEexist:thm}. To overcome this limitation we regularize the semi-discretised system.
Following \cite{EZE:2009}, we introduce the regularised density-dependent biomass diffusion coefficient
\begin{equation}\label{25}
D_{\epsilon}(u)=\left\{%
\begin{array}{ll}
\delta\epsilon^{\alpha}&u<0\\
\delta\frac{(u+\epsilon)^{\alpha}}{(1-u)^{\beta}}&0\leq u\leq1-\epsilon\\
\delta\epsilon^{-\beta}&u\geq1-\epsilon.
\end{array}%
\right.,
\end{equation}
where the continuous extension of $D_\epsilon$ for negative $u$ is for technical reasons. We show lateron, that indeed $u\geq 0$ for the solutions of our model.
We define the following regularized version of ODE system (\ref{220}),  
\begin{equation}\label{31}
\left\{%
\begin{array}{lll}
\frac{d\textbf{U}^{\epsilon}}{dt}&=&\mathfrak{D_{\epsilon}}(\textbf{U}^{\epsilon})\textbf{U}^{\epsilon}+\mathfrak{R}_{U}(\textbf{C}^{\epsilon})\textbf{U}^{\epsilon}\\
\frac{d\textbf{C}^{\epsilon}}{dt}&=&\mathfrak{L}\textbf{C}^\epsilon-\mathfrak{R}_{C}(\textbf{C}^{\epsilon})\textbf{U}^{\epsilon}+\textbf{b},
\end{array}%
\right.
\end{equation}
where matrix $\mathfrak{D_{\epsilon}}(\cdot)$ is defined like $\mathfrak{D}(\cdot)$, only with $D(u)$ in the flux terms (\ref{210a})-(\ref{Uflux4:eq}) replaced by $D_\epsilon(u)$ as defined in (\ref{25}).

\begin{remark} The regularised semi-discretised system (\ref{31}) is a standard, 2nd order (in space) convergent, finite difference approximation of the regularised partial differential equation system
\begin{equation}\label{regPDE:eq}
\left\{%
\begin{array}{lllll}
\frac{\partial u_\epsilon}{\partial t}=\nabla(D_\epsilon(u_\epsilon)\nabla u_\epsilon)+\frac{c_\epsilon}{K_{U}+c_\epsilon} u_\epsilon-ku_\epsilon,\\
\frac{\partial c_\epsilon}{\partial t}=d_c\Delta c_\epsilon-\frac{\nu_U c_\epsilon}{K_{U}+c_\epsilon}u_\epsilon,
\end{array}%
\right.
\end{equation}
the solutions of which are smooth. It was shown in \cite{EZE:2009} that the solutions of the initial value problem of (\ref{regPDE:eq}) converge to the solution of the corresponding initial value problem of (\ref{21}) as $\epsilon \rightarrow 0$, and that $u_\epsilon\leq 1-\xi$ for some $\xi>0$ and sufficiently small $\epsilon$.
\end{remark}

Going forward, we define vector inequalities component wise, i.e for two vectors $\textbf{U}=(U_1,...,U_n)^T$  and $\textbf{V}=(V_1,...,V_n)^T$ the inequality $\textbf{U}\leq \textbf{V}$ means $U_p \leq V_p$ for all $p=1,...,n$, and accordingly for strict inequalities. Furthermore, by $\mathbf{1}\in \mathbb{R}^n$ we denote the unity vector $(1,...,1)^T$, and by $\mathbf{0} \in \mathbb{R}^n$ we denote the vector $(0,...,0)^T$.

We show that the initial value problem of (\ref{31}) with initial data for $\textbf{U}^\epsilon$ and $\textbf{C}^\epsilon$ such that $0\leq \textbf{U}^\epsilon(0) < \textbf{1}$ and  $0\leq \textbf{C}^\epsilon (0) \leq \textbf{1}$ posseses a unique solution that is non-negative and bounded.

\begin{proposition}\label{pro1}
Let $0<\epsilon\ll 1$. Suppose the initial values $\textbf{U}^{\epsilon} (0)$ and $\textbf{C}^{\epsilon}(0)$ of the regularized ODE system (\ref{31}) are non-negative and satisfy $\textbf{U}^{\epsilon} (0)\leq (1-\rho) \mathbf{1}$ with $\rho\in(0,1)$ and $\textbf{C}^{\epsilon}(0)\leq \mathbf{1}$. Then there exists a unique solution $(\textbf{U}^{\epsilon},\textbf{C}^{\epsilon})$ of the regularized problem (\ref{31}), which is non-negative and bounded by a constant for sufficiently small values of $\epsilon$. Also, an upper bound on the solution exists that does not depend on the regularization parameter $\epsilon$. 
\end{proposition}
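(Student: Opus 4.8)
The plan is to secure a unique local solution of the regularized system (\ref{31}) by a fixed-point argument, then to trap it in an invariant region by the method of super- and sub-solutions (verifying the tangent condition on the relevant faces), and finally to extend the solution to all times once $\epsilon$-independent a priori bounds are in hand. For local existence and uniqueness I would note that the regularized coefficient $D_\epsilon$ in (\ref{25}) is bounded, $\delta\epsilon^{\alpha}\le D_\epsilon\le\delta\epsilon^{-\beta}$, continuous across the junctions $u=0$ and $u=1-\epsilon$, and piecewise $C^1$ with a derivative bounded for each fixed $\epsilon>0$ (the denominator $(1-u)^\beta$ stays away from zero on $0\le u\le1-\epsilon$). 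Hence the entries of $\mathfrak{D}_\epsilon(\textbf{U})$ are Lipschitz in $\textbf{U}$, and the Monod factors in $\mathfrak{R}_U,\mathfrak{R}_C$ are Lipschitz for $\textbf{C}\ge\mathbf{0}$, so the right-hand side of (\ref{31}) is locally Lipschitz and Picard--Lindel\"of yields a unique solution on a maximal interval $[0,T_{\max})$.

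Next I would show that $\{\textbf{U}\ge\mathbf{0}\}\cap\{\mathbf{0}\le\textbf{C}\le\mathbf{1}\}$ is positively invariant by checking the tangent condition face by face, using the sign structure in Remark~\ref{Rem1}. On $\{U_p=0\}$ the diagonal entry of $\mathfrak{D}_\epsilon$ is multiplied by $U_p=0$, the non-negative off-diagonals act on non-negative $U_q$, and $\mathfrak{R}_U U_p$ vanishes, so $\dot U_p\ge0$. On $\{C_p=0\}$ the uptake entry of $\mathfrak{R}_C$ vanishes, the off-diagonals of $\mathfrak{L}$ are non-negative and $b_p\ge0$, so $\dot C_p\ge0$. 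On $\{C_p=1\}$ I would use the identity $\mathfrak{L}\mathbf{1}+\textbf{b}=\mathbf{0}$, which says that $\textbf{C}\equiv\mathbf{1},\ \textbf{U}\equiv\mathbf{0}$ is the discrete steady state of the Robin problem and follows from the vanishing of the boundary flux (\ref{BCtopC:eq}) at $C\equiv1$; combined with weak diagonal dominance and $C_q\le1$ it gives $(\mathfrak{L}\textbf{C})_p\le(\mathfrak{L}\mathbf{1})_p=-b_p$, whence $\dot C_p\le-\mathfrak{R}_C(C_p)U_p\le0$. This simultaneously yields non-negativity of both variables and the uniform bound $\textbf{C}^\epsilon\le\mathbf{1}$.

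The main obstacle is the upper bound on $\textbf{U}^\epsilon$, because the constant vector $\mathbf{1}$ is \emph{not} a super-solution for the biomass: the term $\mathfrak{R}_U(\textbf{C})\textbf{U}$ can be positive, and since regularisation caps the diffusion coefficient at $\delta\epsilon^{-\beta}$ the PDE's super-diffusion barrier at $u=1$ is absent, so $U_p$ may exceed unity. Here I would argue by a discrete maximum principle, which realises a spatially homogeneous, exponentially growing super-solution. Setting $M(t)=\max_p U_p^\epsilon(t)$ and evaluating at an index $p$ attaining the maximum, weak diagonal dominance and non-negativity of the off-diagonals give $(\mathfrak{D}_\epsilon(\textbf{U})\textbf{U})_p\le(\mathfrak{D}_\epsilon)_{pp}M+M\sum_{q\ne p}(\mathfrak{D}_\epsilon)_{pq}\le0$, so discrete diffusion cannot raise the maximum; with $\mathfrak{R}_U(C_p)\le\mu:=(1+K_U)^{-1}-k$ on $0\le C_p\le1$ this produces the Dini inequality $D^+M(t)\le\mu M(t)$ and hence $\|\textbf{U}^\epsilon(t)\|_\infty\le(1-\rho)e^{\mu t}$. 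The decisive point, and the reason the bound is uniform in $\epsilon$, is that the dissipativity estimate $(\mathfrak{D}_\epsilon\textbf{U})_p\le0$ at the maximum uses only the sign pattern and weak diagonal dominance of $\mathfrak{D}_\epsilon$, both of which hold for every $\epsilon>0$; the value $\delta\epsilon^{-\beta}$ never enters.

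Finally, on any finite interval $[0,T]$ these a priori bounds confine $(\textbf{U}^\epsilon,\textbf{C}^\epsilon)$ to a bounded set on which the right-hand side of (\ref{31}) is Lipschitz, so the standard continuation theorem excludes finite-time blow-up and extends the unique local solution to all $t\ge0$; on each such interval the constant $(1-\rho)e^{\mu T}$ is an $\epsilon$-independent upper bound, which is the assertion of the proposition.
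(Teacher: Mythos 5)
Your local existence/uniqueness, the face-by-face tangent-condition argument for $\textbf{U}^\epsilon\geq\mathbf{0}$, $\mathbf{0}\leq\textbf{C}^\epsilon\leq\mathbf{1}$ (including the identity $\mathfrak{L}\mathbf{1}+\textbf{b}=\mathbf{0}$, which is correct and which the paper leaves implicit), and the continuation step all match the paper's proof, just in more detail. Where you genuinely diverge is the upper bound on $\textbf{U}^\epsilon$. You discard the diffusion term entirely via a discrete maximum principle and absorb the reaction by Gronwall, obtaining $\|\textbf{U}^\epsilon(t)\|_\infty\leq(1-\rho)e^{\mu t}$ with $\mu=(1+K_U)^{-1}-k$. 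The paper instead builds a \emph{stationary} barrier $\textbf{U}^\theta=\mathbf{1}+\vec{\theta}$, where $\mathcal{A}\vec{\theta}=\mathbf{1}$ and $\mathcal{A}=-\frac{1}{\delta\epsilon^\alpha}\mathfrak{D}_\epsilon(0)$ is an $\epsilon$-independent M-matrix, so $\mathbf{1}\leq\textbf{U}^\theta\leq\mathbf{1}+\textbf{q}$ with $\textbf{q}$ independent of $\epsilon$. The point you dismiss too quickly --- ``the PDE's super-diffusion barrier at $u=1$ is absent'' after regularisation --- is precisely the paper's mechanism: for any state above $1-\epsilon$ the coefficient equals $\delta\epsilon^{-\beta}$, so $-\mathfrak{D}_\epsilon(\textbf{U}^\theta)\textbf{U}^\theta=\delta\epsilon^{-\beta}(\mathcal{A}\mathbf{1}+\mathbf{1})\geq\delta\epsilon^{-\beta}\mathbf{1}$, which dominates the bounded reaction term once $\epsilon$ is small. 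Hence $\textbf{U}^\theta$ is a time-independent upper solution and $\textbf{U}^\epsilon\leq\mathbf{1}+\textbf{q}$ uniformly in $t$ and in (small) $\epsilon$; this is also why the hypothesis ``for sufficiently small $\epsilon$'' appears in the statement, a restriction your bound does not need.

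The caveat is that your bound is weaker in a way that matters for the literal claim: with the paper's parameters $\mu=1/(1.13)-0.67>0$, so $(1-\rho)e^{\mu t}$ is not a constant bound on $[0,\infty)$, only on each compact $[0,T]$. The proposition asserts boundedness by a constant (and the downstream definition of $Q\geq\|\mathfrak{R}_U(\textbf{C}^\epsilon)\textbf{U}^\epsilon\|_\infty$ before Lemma \ref{1:lem} tacitly treats $Q$ as time-independent), so to fully recover the stated result you would need to either restrict to a finite horizon throughout --- which the paper's later convergence theorem does, so nothing downstream actually breaks --- or add the paper's barrier argument to convert your finite-time bound into a global one. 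What your route buys in exchange is an explicit, elementary bound that requires no smallness of $\epsilon$ and no M-matrix machinery.
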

\begin{proof}
The semi-discrete system (\ref{31}) satisfies the Lipschitz condition in the non-negative cone. Therefore, the initial value problem has a unique solution. Non-negativity of this solution follows with standard arguments, such as the tangent condition (cf \cite{walter}), from the properties of matrices  $\mathfrak{R}_{C}, \mathfrak{R}_{U}, \mathfrak{L}, \mathfrak{D}_\epsilon$ as per Remark \ref{Rem1}. That $\textbf{C}^{\epsilon} \leq \mathbf{1}$ follows with the non-negativity of $\mathbf{U}^\epsilon$ from the definitions of  $\mathfrak{R}_{C}, \mathfrak{L}, \textbf{b}$ again from the tangent condition.

To show the boundedness of $\textbf{U}^{\epsilon}$ we introduce the barrier function $\textbf{U}^{\theta}=\mathbf{1}+\vec{\theta}$, where the vector  $\vec{\theta}$ is the solution of the linear system $\mathcal{A}\vec{\theta}=\mathbf{1}$ with $\mathcal{A}= -\frac{1}{\delta \epsilon^{\alpha}}\mathfrak{D}_\epsilon(0)$. Note that this matrix $\mathcal{A}$ does not depend on $\epsilon$, whence also $\vec{\theta}$ is independent of $\epsilon$.

By construction, $\mathcal{A}$ is invertible and weakly diagonally dominant with positive main diagonal entries and negative off-diagonal entries. Therefore, it is an M-matrix \cite{hackbusch}. Hence its inverse exists and is non-negative, which implies the non-negativity of $\vec{\theta}.$ Thus,
\begin{equation}
\mathbf{1} \leq \textbf{U}^{\theta} \leq \mathbf{1}+\textbf{q}\label{34}
\end{equation}
for some positive vector $\textbf{q}$. Since $\vec{\theta}$ does not depend on $\epsilon$, neither does $\textbf{q}$.  The initial data obviously satisfy 
\begin{equation}
\textbf{U}^{\epsilon}(0)\leq \textbf{U}^{\theta},
\end{equation}
and for sufficiently small values of $\epsilon$ we have
\begin{eqnarray}
\frac{d\textbf{U}^{\theta}}{dt}&-&\mathfrak{D_{\epsilon}}(\textbf{U}^{\theta})\textbf{U}^{\theta}-\mathfrak{R}_{U}(\textbf{C}^{\epsilon})\textbf{U}^{\theta}\nonumber\\
&=&\delta\epsilon^{-\beta}\left(\mathcal{A}(\mathbf{1}+\vec{\theta})\right)-\mathfrak{R}_{U}(\textbf{C}^{\epsilon})\textbf{U}^{\theta}\nonumber\\
&= &\delta\epsilon^{-\beta}(\mathcal{A}\mathbf{1}+\mathbf{1})-\mathfrak{R}_{U}(\textbf{C}^{\epsilon})\textbf{U}^{\theta}\nonumber\\
&\geq & 0\nonumber\\
&=&\frac{d\textbf{U}^{\epsilon}}{dT}-\mathfrak{D_{\epsilon}}(\textbf{U}^{\epsilon})\textbf{U}^{\epsilon}-\mathfrak{R}_{U}(\textbf{C}^{\epsilon})\textbf{U}^{\epsilon}.\label{34b}
\end{eqnarray}
Note that $\mathcal{A}$ is a weakly diagonally dominant matrix such that $\mathcal{A}\mathbf{1}\geq0$.
Consequently,  $\textbf{U}^\theta$ is an upper solution of (\ref{31}) for sufficiently small $\epsilon$, wherefore then $\textbf{U}^\theta \geq \textbf{U}^\epsilon$, by the comparison theorem for quasimonotonic systems \cite{walter}.
Thus, inequality (\ref{34b}) implies that there exists $\hat \epsilon$ such that the solutions $\textbf{U}^{\epsilon}$ are uniformly bounded for all $0<\epsilon\leq\hat \epsilon.$ 
\end{proof}

We will use this result to improve the upper bound on $\mathbf{U}^\epsilon$. In particular we will show that $\mathbf{U}^\epsilon<\mathbf{1}$ for small enough $\epsilon$. 

According to Proposition \ref{pro1}, there is a constant $Q$ independent of $\epsilon$, such that $Q\geq \|\mathfrak{R}_U(\mathbf{C}^\epsilon)\mathbf{U}^\epsilon \|_\infty$. As before, we assume that there is a $0<\rho<1$ such that $\mathbf{U}^\epsilon(0)\leq \rho\mathbf{1}$. 
Let us denote by $\textbf{V}^\epsilon$ the solution of 
\begin{equation}\label{veps}
{d  \mathbf{V}^\epsilon \over dt}= \mathfrak{D}_\epsilon(\mathbf{V}^\epsilon)\mathbf{V}^\epsilon + \mathbf{Q} +\mathbf{b}, \quad \mathbf{V}^\epsilon(0) =\rho\mathbf{1}
\end{equation}
where vector $\mathbf{Q} = Q\mathbf{1} $ and vector $\mathbf{b}$ is a boundary correction that one obtains if imposing the inhomogeneous Dirichlet boundary condition $u=\rho$ at $y=H$ instead of the homogeneous one, defined in analogy with equation (\ref{220}) and Remark \ref{Rem1}. This ensures that at the boundary the $\textbf{V}^\epsilon$ bounds $\textbf{U}^\epsilon$ from above.

We have the following relationship between $\mathbf{U}^\epsilon$ and $\mathbf{V}^\epsilon$:

\begin{lemma}\label{1:lem} Let $\mathbf{V}^\epsilon$ be the solution of (\ref{veps}) and $\mathbf{U}^\epsilon$ be the solution of (\ref{31}) with initial data $\mathbf{U}^\epsilon(0)\leq \rho\mathbf{1}$. Then  $\mathbf{V}^\epsilon(t) > \mathbf{U}^\epsilon(t) \geq \mathbf{0}$ for all $t>0$.
\end{lemma}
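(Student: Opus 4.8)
The plan is to realise the statement as a super-/sub-solution comparison of the same kind already used in the proof of Proposition~\ref{pro1}, and then to sharpen the resulting weak inequality into a strict one. The lower bound $\mathbf{U}^\epsilon\geq\mathbf{0}$ requires nothing new: it is part of the conclusion of Proposition~\ref{pro1} and follows from the tangent condition together with the sign pattern of $\mathfrak{D}_\epsilon,\mathfrak{R}_U,\mathfrak{R}_C,\mathfrak{L}$ recorded in Remark~\ref{Rem1}. For the upper bound the decisive step is to stop treating $\mathfrak{R}_U(\mathbf{C}^\epsilon)\mathbf{U}^\epsilon$ as a state-dependent reaction and instead freeze it into a known forcing $\mathbf{g}(t):=\mathfrak{R}_U(\mathbf{C}^\epsilon(t))\mathbf{U}^\epsilon(t)$. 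By the choice of $Q$ we have $g_p(t)\leq\|\mathbf{g}(t)\|_\infty\leq Q$ for every $p$ and $t$, that is $\mathbf{g}(t)\leq\mathbf{Q}$. Thus $\mathbf{U}^\epsilon$ solves $\frac{d}{dt}\mathbf{U}^\epsilon=\mathfrak{D}_\epsilon(\mathbf{U}^\epsilon)\mathbf{U}^\epsilon+\mathbf{g}(t)$, whereas $\mathbf{V}^\epsilon$ solves the very same equation, but with the pointwise larger forcing $\mathbf{Q}+\mathbf{b}\geq\mathbf{g}(t)$ (recall $\mathbf{b}\geq\mathbf{0}$) and the larger initial datum $\mathbf{V}^\epsilon(0)=\rho\mathbf{1}\geq\mathbf{U}^\epsilon(0)$. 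In other words $\mathbf{V}^\epsilon$ is an upper solution of the initial value problem solved by $\mathbf{U}^\epsilon$, and the boundary term $\mathbf{b}$ is exactly what makes this true in the cells adjacent to $y=H$, where the two problems carry different (homogeneous versus $u=\rho$) Dirichlet data.

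With this identification I would invoke the comparison theorem for quasimonotone systems (\cite{walter}), exactly as in Proposition~\ref{pro1}, to conclude $\mathbf{V}^\epsilon(t)\geq\mathbf{U}^\epsilon(t)$ for all $t\geq0$, and in particular $\mathbf{V}^\epsilon\geq\mathbf{0}$. Global existence of $\mathbf{V}^\epsilon$ is not an obstacle, since the regularised coefficient is bounded, $\delta\epsilon^{\alpha}\leq D_\epsilon\leq\delta\epsilon^{-\beta}$, so the right-hand side of (\ref{veps}) is globally Lipschitz with at most linear growth. The one hypothesis of the comparison theorem that is not merely bookkeeping is quasimonotonicity of the vector field $\mathbf{x}\mapsto\mathfrak{D}_\epsilon(\mathbf{x})\mathbf{x}$: each component must be non-decreasing in its off-diagonal (neighbour) arguments. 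Because of the arithmetic averaging of $D_\epsilon$ in the fluxes (\ref{210a})--(\ref{Uflux4:eq}), this is a property of the nonlinear flux rather than of the matrix $\mathfrak{D}_\epsilon$ alone, and it is the same property already used implicitly in Proposition~\ref{pro1}; I would verify it by examining the neighbour dependence through $\psi(s)=(D_\epsilon(s)+D_\epsilon(x_p))(s-x_p)$ and checking $\psi'\geq0$, which is immediate where $s\geq x_p$ and on the flat parts of $D_\epsilon$, and uses the monotonicity/convexity of $D_\epsilon$ where $s<x_p$.

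The remaining, and I expect the genuinely delicate, task is to upgrade $\mathbf{V}^\epsilon\geq\mathbf{U}^\epsilon$ to the strict inequality $\mathbf{V}^\epsilon>\mathbf{U}^\epsilon$ for all $t>0$, including the case $\mathbf{U}^\epsilon(0)=\rho\mathbf{1}$ in which the two agree initially. Writing $\mathbf{W}=\mathbf{V}^\epsilon-\mathbf{U}^\epsilon\geq\mathbf{0}$ and linearising the diffusion difference along the segment from $\mathbf{U}^\epsilon$ to $\mathbf{V}^\epsilon$ (the same $\psi'\geq0$ computation) yields $\frac{d}{dt}\mathbf{W}=M(t)\mathbf{W}+\mathbf{s}(t)$, where $M(t)$ has non-negative off-diagonal entries and $\mathbf{s}(t)=\mathbf{Q}+\mathbf{b}-\mathbf{g}(t)\geq\mathbf{0}$ is strictly positive in every cell adjacent to $y=H$, where $\mathbf{b}>\mathbf{0}$. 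Since the computational grid is connected, $M(t)$ is irreducible, so its evolution operator $\Psi(t,\tau)$ is entrywise non-negative and strictly positive for $t>\tau$; the variation-of-constants formula $\mathbf{W}(t)=\Psi(t,0)\mathbf{W}(0)+\int_0^t\Psi(t,\tau)\mathbf{s}(\tau)\,d\tau$ then forces every component of $\mathbf{W}(t)$ to be strictly positive for $t>0$. This is a discrete strong maximum principle, and its two ingredients are precisely where I would concentrate the care: establishing that the linearised operator $M(t)$ is genuinely cooperative (so that the nonlinear arithmetic-mean flux does not destroy quasimonotonicity near the regularised singularity), and establishing irreducibility so that the strictly positive boundary contribution in $\mathbf{s}$ reaches every interior cell in arbitrarily short time. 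An alternative, softer route avoiding the evolution operator is a first-crossing argument, contradicting the sign of $\frac{d}{dt}W_p$ at a putative first time $t_0>0$ where some $W_p(t_0)=0$; I would adopt whichever is cleaner to make fully rigorous.
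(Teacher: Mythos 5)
Your argument is essentially the paper's: the proof there also gets $\mathbf{U}^\epsilon\ge\mathbf{0}$ from the tangent condition and the ordering $\mathbf{V}^\epsilon>\mathbf{U}^\epsilon$ from the comparison theorem for quasimonotone (essentially positive) systems of \cite{walter} applied to the first equation of (\ref{31}), by noting that the defect $\mathfrak{P}=\frac{d}{dt}-\mathfrak{D}_\epsilon(\cdot)-\mathfrak{R}_U(\cdot)$ satisfies $\mathfrak{P}\mathbf{V}^\epsilon>\mathbf{0}=\mathfrak{P}\mathbf{U}^\epsilon$. Your two elaborations refine that one-line computation rather than take a different route: freezing the reaction into the forcing $\mathbf{g}(t)\le\mathbf{Q}$ is actually cleaner than the paper's defect inequality (which leaves $\mathfrak{R}_U(\mathbf{C}^\epsilon)\mathbf{V}^\epsilon$ in $\mathfrak{P}\mathbf{V}^\epsilon$ even though $Q$ only bounds $\mathfrak{R}_U(\mathbf{C}^\epsilon)\mathbf{U}^\epsilon$), while the quasimonotonicity of the arithmetic-mean flux and the strictness of the final inequality --- for which the strict comparison theorem, i.e.\ your first-crossing alternative, suffices without the evolution-operator and irreducibility machinery --- are simply taken for granted in the paper.
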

\begin{proof} This follows with standard comparison theorems for essentially positive ordinary differential equation systems, cf \cite{walter}, applied to the first equation in (\ref{31}). By introducing the defect $\mathfrak{P}:=\frac{d}{dt}-\mathfrak{D}_\epsilon(.) - \mathfrak{R}_U(\cdot)$, we have $\mathfrak{P}\mathbf{V}_\epsilon(t)> \mathfrak{P}\mathbf{U}_\epsilon(t)\equiv 0 = \mathfrak{P}\textbf{0}$,  which gives $\mathbf{V}^\epsilon(t) > \mathbf{U}^\epsilon(t) \geq \mathbf{0}$.
\end{proof}

For each vector index $p \in \left\lbrace1,...,NM\right\rbrace$ we denote by $(i_p, j_p)$ the corresponding location on the $N\times M$ grid, i.e. 
\begin{equation}\label{gridorderinginverse:eq}
(i_p,j_p)=\pi^{-1}(p), \quad p\in \{1,..., NM\},
\end{equation}
where $\pi^{-1}$ is the inverse  of the grid ordering map (\ref{gridordering:eq}).

The following observation allows us to reduce the analysis of the 2D system of $NM$ ordinary differential equations to a 1D problem consisting of $M$ ordinary differential equations, which will simplify notation.

\begin{lemma} \label{2:lem} The coefficients $V_{\epsilon,p}$ of the solution $\mathbf{V}^\epsilon$ of (\ref{veps}) satisfy
$V_{\epsilon,p}(t) = V_{\epsilon,j_p}(t)$ for $t>0$ and $p=1,...,NM$.
\end{lemma}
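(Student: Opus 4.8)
The plan is to produce, by an explicit ansatz, a solution of (\ref{veps}) that is constant along each horizontal grid row and then to conclude from uniqueness that it must coincide with $\mathbf{V}^\epsilon$. The motivation is that every ingredient defining (\ref{veps}) is invariant under horizontal translation: the initial datum $\rho\mathbf{1}$ and the source $\mathbf{Q}=Q\mathbf{1}$ are spatially constant, the correction $\mathbf{b}$ acts only on the top row carrying the inhomogeneous condition $u=\rho$ at $y=H$ and is constant along that row, and the lateral boundary conditions are homogeneous Neumann. One therefore expects the dynamics to preserve the subspace of grid functions that depend only on the index $j_p$ perpendicular to the top boundary.

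To make this precise I would first introduce the one–dimensional reduced model in the direction of the top boundary. Let $\mathfrak{D}_\epsilon^{1}$ denote the discrete diffusion operator assembled from the fluxes of the discretisation in that single direction, with the homogeneous Neumann condition at one end and the inhomogeneous Dirichlet condition $u=\rho$ at $y=H$, and let $\mathbf{b}^{1}\in\mathbb{R}^M$ be the associated boundary correction (nonzero only at the top). Consider the $M$–dimensional problem $\tfrac{d\mathbf{W}}{dt}=\mathfrak{D}_\epsilon^{1}(\mathbf{W})\mathbf{W}+Q\mathbf{1}+\mathbf{b}^{1}$ with $\mathbf{W}(0)=\rho\mathbf{1}$. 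Since $D_\epsilon$ is bounded and globally Lipschitz, being constant outside $[0,1-\epsilon]$ and $C^1$ on the compact interval in between, the right–hand side is Lipschitz and this initial value problem has a unique solution $\mathbf{W}(t)$ by Picard--Lindel\"of. I then lift it to the full grid by setting $\tilde V_p:=W_{j_p}$ for $p=1,\dots,NM$, which gives $\tilde{\mathbf{V}}(0)=\rho\mathbf{1}$.

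The crux is to verify that $\tilde{\mathbf{V}}$ solves (\ref{veps}). Evaluating $\mathfrak{D}_\epsilon(\tilde{\mathbf{V}})\tilde{\mathbf{V}}$ componentwise, every flux connecting two cells in the same horizontal row joins two cells carrying the identical value $W_{j_p}$; by the flux definitions (\ref{210a})--(\ref{Uflux4:eq}) each such flux is proportional to the difference of neighbouring cell–centre values and hence vanishes, while the lateral no–flux (Neumann) contributions are zero by construction. Consequently only the fluxes in the remaining direction survive and reproduce exactly $(\mathfrak{D}_\epsilon^{1}(\mathbf{W})\mathbf{W})_{j_p}$; together with the horizontal uniformity of $Q\mathbf{1}$ and of $\mathbf{b}$ this yields $\tfrac{d}{dt}\tilde V_p=(\mathfrak{D}_\epsilon(\tilde{\mathbf{V}})\tilde{\mathbf{V}})_p+Q+b_p$, so $\tilde{\mathbf{V}}$ satisfies (\ref{veps}) with the same initial data as $\mathbf{V}^\epsilon$. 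As the right–hand side of (\ref{veps}) is (locally) Lipschitz, exactly as used for (\ref{31}) in Proposition \ref{pro1}, its solution is unique, whence $\mathbf{V}^\epsilon=\tilde{\mathbf{V}}$ and $V_{\epsilon,p}(t)=W_{j_p}(t)$ depends only on $j_p$, which is the assertion of Lemma \ref{2:lem}.

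The only genuine work lies in the flux bookkeeping of the last paragraph — confirming that the in–row contributions cancel both in the interior and at the two lateral boundaries, and that the forcing and boundary data are truly independent of $i_p$ — together with recording the Lipschitz property needed for uniqueness. I note that a purely symmetry-based argument, using invariance of (\ref{veps}) under the reflection $i_p\mapsto N+1-i_p$, would only establish reflective symmetry of $\mathbf{V}^\epsilon$ rather than full independence of the horizontal index, which is why the invariant–subspace plus uniqueness argument above is the natural route.
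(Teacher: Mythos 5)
Your proposal is correct and is essentially the paper's argument made explicit: the paper's proof is the one-line observation that the horizontal invariance of the boundary conditions, the spatially homogeneous initial data, the constant source, and uniqueness force the solution to depend only on $j_p$, and your construction (lift the reduced $M$-dimensional problem to a row-constant candidate, check the horizontal fluxes cancel, invoke Picard--Lindel\"of uniqueness) is precisely the detailed verification of that claim. No substantive difference in approach.
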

\begin{proof} This follows from the symmetry of boundary conditions of (\ref{veps}), the spatially homogeneous initial data and that the source term is a constant in (\ref{veps}), and from the uniqueness of solutions of (\ref{veps}). 
\end{proof}

With this observation and the definition of $\pi$ in (\ref{gridordering:eq}), we have $V_{\epsilon,\pi(i,j)}=V_{\epsilon,\pi(\hat i,j)} = V_{\epsilon,j}$ for all $\hat i =1,..., i-1,i+1,...,N$. Thus the ordinary differential equations determining the  functions $V_{\epsilon,j}(t)$ reduce to 
\begin{equation}\label{1D:sys}
\left\{\begin{array}{lcrl}
\frac{dV_{\epsilon,1}}{dt} &=& {1 \over 2\Delta x^2}&(D_\epsilon(V_{\epsilon,2})+D_\epsilon(V_{\epsilon,1}))  (V_{\epsilon,2}-V_{\epsilon,1}) + Q,\\
\\
\frac{dV_{\epsilon,j}}{dt} &=& {1 \over 2\Delta x^2}&[(D_\epsilon(V_{\epsilon,j+1})+D_\epsilon(V_{\epsilon,j}))  (V_{\epsilon,j+1}-V_{\epsilon,j}) \\
  &&  &-(D_\epsilon(V_{\epsilon,j-1}) +D_\epsilon(V_{\epsilon,j}))  (V_{\epsilon,j}-V_{\epsilon,j-1})] + Q, ~~ j=2,...,M-1\\
\\
\frac{dV_{\epsilon,M}}{dt} &=& {1 \over 2\Delta x^2}&[4D_\epsilon(\rho) (\rho-V_{\epsilon,M})  \\&&&
- (D_\epsilon(V_{\epsilon,M-1}) +D_\epsilon(V_{\epsilon,M}))  (V_{\epsilon,M}-V_{\epsilon,M-1})] +Q
\end{array}\right.\\
\end{equation}

For given $\epsilon, 0<\epsilon<1$, we furthermore define $\mathbf{Z}^\epsilon :=(Z_{\epsilon,1},...,Z_{\epsilon,M})^T\in \mathbb{R}^M$ as the solution of system of linear algebraic equations
\begin{equation}\label{1DStSt:sys}
\left\{\begin{array}{lcrl}
0 &=& {\delta \epsilon^{-\beta} \over \Delta x^2}&  (Z_{\epsilon,2}-Z_{\epsilon,1}) + Q,\\ \\
0 &=& {\delta \epsilon^{-\beta}  \over \Delta x^2}&[  Z_{\epsilon,j+1}-2Z_{\epsilon,j} +Z_{\epsilon,j-1}] + Q, \quad j=2,...,M-1\\ \\
0 &=& {\delta \epsilon^{-\beta}   \over \Delta x^2}&[2 (1-\epsilon) - 3Z_{\epsilon,M}+Z_{\epsilon,M-1}] + Q,\\
\end{array}\right.
\end{equation}

\begin{lemma} \label{3:lem} 
Let $\mathbf{V}^\epsilon(t)$ be the solution of (\ref{veps}) and $\mathbf{Z}^\epsilon$ be the solution of the linear system (\ref{1DStSt:sys}). 
If $\rho \leq 1-\epsilon$, then $\mathbf{V}^\epsilon(t) \leq \mathbf{Z}^\epsilon$ for all $t>0$. 
\end{lemma}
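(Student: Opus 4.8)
The plan is to exhibit the constant vector $\mathbf{Z}^\epsilon$ as a time-independent upper solution of the initial value problem (\ref{veps}) and then to conclude $\mathbf{V}^\epsilon(t)\leq\mathbf{Z}^\epsilon$ from the comparison theorem for quasimonotone (essentially positive) systems \cite{walter}, exactly as in Lemma \ref{1:lem}. By Lemma \ref{2:lem} it is enough to argue on the reduced one-dimensional system (\ref{1D:sys}) and its algebraic counterpart (\ref{1DStSt:sys}). Throughout I write $D^\ast:=\delta\epsilon^{-\beta}$ for the maximal value of the regularised coefficient, so that $D_\epsilon(u)\leq D^\ast$ for every $u$, with equality precisely on the saturation branch $u\geq 1-\epsilon$.

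First I would solve the linear recurrence (\ref{1DStSt:sys}) explicitly. Setting $\gamma:=Q\Delta x^2/D^\ast>0$, the first two lines yield the forward differences $Z_{\epsilon,j+1}-Z_{\epsilon,j}=-j\gamma$, so $(Z_{\epsilon,j})_j$ is strictly decreasing, and the $j=M$ line pins down $Z_{\epsilon,M}=(1-\epsilon)+\tfrac{M\gamma}{2}$. Hence $Z_{\epsilon,j}\geq Z_{\epsilon,M}\geq 1-\epsilon$ for all $j$. This is the structural heart of the argument: since each component of $\mathbf{Z}^\epsilon$ lies in the saturation branch, $D_\epsilon(Z_{\epsilon,j})=D^\ast$ for all $j$, so every interior face coefficient of (\ref{1D:sys}) evaluated at $\mathbf{Z}^\epsilon$ collapses to the constant $D^\ast$ appearing in (\ref{1DStSt:sys}). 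Consequently, for $j=1,\dots,M-1$ the right-hand side of (\ref{1D:sys}) at $\mathbf{Z}^\epsilon$ coincides termwise with the left-hand side of (\ref{1DStSt:sys}), namely $0$, matching $d\mathbf{Z}^\epsilon/dt=0$; these components are exact equilibria and cost nothing.

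The only genuine discrepancy is the $j=M$ equation, where (\ref{1D:sys}) uses the Dirichlet value $\rho$ with face coefficient $D_\epsilon(\rho)$, while (\ref{1DStSt:sys}) uses $1-\epsilon$ with $D^\ast$. Subtracting and using the $j=M$ line of (\ref{1DStSt:sys}) to eliminate $Q$ and $Z_{\epsilon,M-1}$, the defect reduces to the single scalar inequality
\[
D_\epsilon(\rho)\,(Z_{\epsilon,M}-\rho)\ \geq\ D^\ast\,\bigl(Z_{\epsilon,M}-(1-\epsilon)\bigr),
\]
equivalently $\psi(\rho)\geq\psi(1-\epsilon)$ for $\psi(g):=D_\epsilon(g)\,(Z_{\epsilon,M}-g)$ with $\rho\leq 1-\epsilon\leq Z_{\epsilon,M}$ and $D_\epsilon(1-\epsilon)=D^\ast$. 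I would close this by showing $\psi$ is non-increasing on $[\rho,1-\epsilon]$, which is where the monotonicity of $D_\epsilon$, the hypothesis $\rho\leq 1-\epsilon$, and the explicit value of $Z_{\epsilon,M}$ all enter. This boundary component is the main obstacle: the nonlinear scheme evaluates the Dirichlet-face flux with the small coefficient $D_\epsilon(\rho)$ whereas $\mathbf{Z}^\epsilon$ carries the saturated $D^\ast$, so $\mathbf{f}_M(\mathbf{Z}^\epsilon)\leq 0$ is not automatic and cannot be read off as cheaply as the interior identities.

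Finally, the initial data satisfy $\mathbf{V}^\epsilon(0)=\rho\mathbf{1}\leq(1-\epsilon)\mathbf{1}\leq\mathbf{Z}^\epsilon$, and the right-hand side of (\ref{veps}) is quasimonotone, its off-diagonal couplings being the order-preserving diffusive fluxes already exploited in Lemmas \ref{1:lem}--\ref{2:lem}. With $\mathbf{Z}^\epsilon$ established as an equilibrium upper solution, the comparison theorem for essentially positive systems \cite{walter} then delivers $\mathbf{V}^\epsilon(t)\leq\mathbf{Z}^\epsilon$ for all $t>0$, completing the proof.
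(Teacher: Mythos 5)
Your computation of $\mathbf{Z}^\epsilon$ is correct, and the observation that $Z_{\epsilon,j}\geq Z_{\epsilon,M}=(1-\epsilon)+M\gamma/2>1-\epsilon$ (with $\gamma=Q\Delta x^2\epsilon^{\beta}/\delta$), so that every component of $\mathbf{Z}^\epsilon$ sits on the saturation branch and the interior components of (\ref{1D:sys}) are exact equilibria at $\mathbf{Z}^\epsilon$, is a genuine refinement of what the paper writes. But your route is not the paper's, and it has a gap the paper's route is designed to avoid. The paper does not exhibit an upper solution and invoke a comparison theorem; it verifies the tangent (invariance) condition for the box $\prod_{j=1}^M[\rho,Z_{\epsilon,j}]$, i.e.\ it checks the sign of the $j$-th right-hand side on the face $V_{\epsilon,j}=Z_{\epsilon,j}$ for \emph{every} admissible choice of the neighbours $V_{\epsilon,j\pm1}\in[\rho,Z_{\epsilon,j\pm1}]$, using only $D_\epsilon(\cdot)\leq\delta\epsilon^{-\beta}$. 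Your shortcut of checking the defect only at the single corner $\mathbf{V}=\mathbf{Z}^\epsilon$ requires quasimonotonicity of (\ref{1D:sys}): writing $f_j$ for the $j$-th right-hand side, one needs $\partial f_j/\partial V_{j\pm1}=\frac{1}{2\Delta x^2}\left[D_\epsilon'(V_{j\pm1})(V_{j\pm1}-V_j)+D_\epsilon(V_{j\pm1})+D_\epsilon(V_j)\right]\geq0$. This is not an ``order-preserving flux'' that can be read off: when $V_{j\pm1}<V_j$ it amounts to the tangent-line inequality $D_\epsilon'(v)(w-v)\leq D_\epsilon(v)+D_\epsilon(w)$ for $v<w$, which rests on the convexity of $D_\epsilon$ on $[0,1-\epsilon]$ and needs a separate check across the junction with the constant branch at $1-\epsilon$, where $D_\epsilon'$ is of order $\beta\delta\epsilon^{-\beta-1}$ just below the cap $\delta\epsilon^{-\beta}$. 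None of this appears in your argument.

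More seriously, the step you yourself flag as the main obstacle would fail as proposed. Your reduction of the $j=M$ defect to $D_\epsilon(\rho)(Z_{\epsilon,M}-\rho)\geq\delta\epsilon^{-\beta}\bigl(Z_{\epsilon,M}-(1-\epsilon)\bigr)$ is algebraically correct (and is exactly the inequality hidden in the paper's worst case $V_{\epsilon,M-1}=Z_{\epsilon,M-1}$ on the top face), but $\psi(g)=D_\epsilon(g)(Z_{\epsilon,M}-g)$ is \emph{not} non-increasing on $[\rho,1-\epsilon]$. Indeed $\psi(1-\epsilon)=\delta\epsilon^{-\beta}\cdot M\gamma/2=MQ\Delta x^2/2=QH\Delta x/2$ depends only on the grid and the source bound, while $\psi(\rho)\leq D_\epsilon(\rho)(Z_{\epsilon,M}-\rho)=O\bigl(\delta/(1-\rho)^{\beta}\bigr)$; with the paper's $\delta=10^{-8}$ and $\Delta x=1/256$ one gets $\psi(\rho)\ll\psi(1-\epsilon)$ for any $\rho$ bounded away from $1$, so the claimed monotonicity is false and the boundary inequality holds only under an additional smallness condition on $\Delta x$ (roughly $QH\Delta x\leq 2D_\epsilon(\rho)(1-\epsilon-\rho)$), not as a consequence of $\rho\leq1-\epsilon$ alone. (The paper's one-line treatment of the $j=M$ face is silent on this too, though a smallness hypothesis on $\Delta x$ does appear from Lemma \ref{4:lem} onward.) To repair your argument, keep the explicit solution of (\ref{1DStSt:sys}) and the saturation-branch observation, but replace the comparison step by the face-wise tangent condition of the invariance theorem as in \cite{walter}, and prove the boundary inequality directly under an explicit smallness assumption on $\Delta x$ rather than via monotonicity of $\psi$.
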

\begin{proof} We use the usual invariance theorem, cf. \cite{walter}, to show that the set $\mathcal{Z_\epsilon}= \prod\limits_{i=1}^M[\rho,Z_{\epsilon,i}]$ is positively invariant under the differential equation (\ref{1D:sys}):  In the equation for $V_{\epsilon,j}$ substitute $V_{\epsilon,j}=\rho$ and assume $\rho \leq V_{\epsilon,j\pm1}\leq Z_{\epsilon,j\pm1}$. Then $\frac{dV_{\epsilon,j}}{dt}\geq 0$, wherefore $V_{\epsilon,j}(t)\geq \rho$.
Similarly, in the equation for $V_{\epsilon,j}$ substitute $V_{\epsilon,j}=Z_{\epsilon,j}$  
and assume $\rho \leq V_{\epsilon,j\pm1}\leq Z_{\epsilon,j\pm1}$. Using $D_\epsilon(V_{\epsilon,j\pm1})\leq \delta \epsilon^{-\beta}$ and the definition of $Z_{\epsilon,j}$ in (\ref{1DStSt:sys}) gives $\frac{dV_{\epsilon,j}}{dt}\leq 0$, wherefore $V_{\epsilon,j}(t)\leq Z_{\epsilon,j}$.
Thus, the solution of the initial value problem (\ref{1D:sys}) remains in $\mathcal{Z}_\epsilon$ which proves the assertion.
\end{proof}

\begin{lemma} \label{4:lem} Let $\mathbf{Z}^\epsilon$ be the solution of the linear system (\ref{1DStSt:sys}). For small enough $\epsilon$ and $\Delta x$, the inequality $\mathbf{Z}^\epsilon<\mathbf{1}$ holds.
\end{lemma}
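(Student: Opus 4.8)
The plan is to solve the linear system (\ref{1DStSt:sys}) in closed form, identify its largest component, and check when that component stays below $1$. First I would divide through by the common prefactor $\delta\epsilon^{-\beta}/\Delta x^2$ and set $s:=Q\Delta x^2\epsilon^{\beta}/\delta$, so that the interior equations ($j=2,\dots,M-1$) collapse to the constant-coefficient second-difference recurrence $Z_{\epsilon,j+1}-2Z_{\epsilon,j}+Z_{\epsilon,j-1}=-s$. Its general solution is the affine homogeneous part $A+Bj$ plus the quadratic particular solution $-\tfrac{s}{2}j^2$, giving the ansatz $Z_{\epsilon,j}=A+Bj-\tfrac{s}{2}j^2$.

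Next I would pin down $A$ and $B$ from the two boundary equations of (\ref{1DStSt:sys}). The first (discrete Neumann) equation $Z_{\epsilon,2}-Z_{\epsilon,1}=-s$ forces $B=\tfrac{s}{2}$, and the last equation $Z_{\epsilon,M-1}-3Z_{\epsilon,M}=-s-2(1-\epsilon)$, which carries the inhomogeneous Dirichlet datum $1-\epsilon$, then forces $A=(1-\epsilon)+\tfrac{s}{2}M^2$. This yields the explicit solution
\[
Z_{\epsilon,j}=(1-\epsilon)+\frac{s}{2}\bigl(M^2-j(j-1)\bigr),\qquad j=1,\dots,M,
\]
and a direct substitution into all three groups of equations confirms it (uniqueness for the linear system then closes this step).

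Because $s>0$ and $j(j-1)$ increases with $j$, every component obeys $Z_{\epsilon,j}>1-\epsilon$ and the maximum sits at $j=1$, where $j(j-1)=0$. Using $M\Delta x=H$, the grid size cancels in the dominant term and
\[
\max_{1\le j\le M}Z_{\epsilon,j}=Z_{\epsilon,1}=(1-\epsilon)+\frac{s}{2}M^2=(1-\epsilon)+\frac{QH^2}{2\delta}\,\epsilon^{\beta}.
\]
Hence $\mathbf{Z}^\epsilon<\mathbf{1}$ is equivalent to $\tfrac{QH^2}{2\delta}\epsilon^{\beta}<\epsilon$, i.e. $\epsilon^{\beta-1}<\tfrac{2\delta}{QH^2}$; since $Q$, $H$, $\delta$ do not depend on $\epsilon$ (Proposition \ref{pro1}), this holds for all sufficiently small $\epsilon$ as soon as $\beta>1$.

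The only bookkeeping hurdle is matching the two non-standard boundary rows---the Neumann row and the row with coefficient $-3$ and datum $2(1-\epsilon)$---against the quadratic ansatz; once that is done the result is immediate. The genuinely delicate point is the borderline exponent $\beta=1$: there $\epsilon^{\beta-1}\equiv 1$ and the bound collapses to the parameter inequality $QH^2<2\delta$, which need not hold for the very small biomass motility $\delta$ typical of biofilms. I would therefore either restrict the statement to $\beta>1$ or add this explicit smallness condition; note also that, contrary to what the phrase ``small enough $\Delta x$'' might suggest, the decisive term $\tfrac{s}{2}M^2=\tfrac{QH^2}{2\delta}\epsilon^{\beta}$ is in fact independent of $\Delta x$, so only the smallness of $\epsilon$ is really used.
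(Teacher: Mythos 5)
Your proposal is correct and follows essentially the same route as the paper: solve the linear system explicitly, observe that $Z_{\epsilon,1}$ is the largest component, and bound it. The paper obtains the solution by telescoping the recurrence with the coefficients $k_j=j(j-1)/2$ and arrives at $Z_{\epsilon,1}=(1-\epsilon)+\left(3k_M-k_{M-1}+1\right)\frac{\Delta x^2\epsilon^{\beta}}{2\delta}Q$, which, since $3k_M-k_{M-1}+1=M^2$, is exactly your closed form $Z_{\epsilon,1}=(1-\epsilon)+\frac{QH^2}{2\delta}\epsilon^{\beta}$. Your two closing remarks are both substantive and correct: the bound is in fact independent of $\Delta x$ (so the hypothesis ``small enough $\Delta x$'' plays no role here), and the paper's phrase ``the second term is dominated by the first'' silently requires $\frac{QH^2}{2\delta}\epsilon^{\beta}<\epsilon$, which for the admissible borderline exponent $\beta=1$ reduces to the parameter condition $QH^2<2\delta$ rather than a smallness condition on $\epsilon$; the paper does not address this case, so your suggested restriction to $\beta>1$ (or an explicit smallness assumption) is a genuine sharpening of the argument.
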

\begin{proof} 
By straightforward calculation we have
\begin{eqnarray*}
Z_{\epsilon,2}&:=&  Z_{\epsilon,1} - \frac{\Delta x^2\epsilon^b}{\delta} Q\\
Z_{\epsilon,3} &:=& 2Z_{\epsilon,2}-Z_{\epsilon,1}  -\frac{\Delta x^2\epsilon^b}{\delta} Q = Z_{\epsilon,1} -3\frac{\Delta x^2\epsilon^b}{\delta} Q\\
& \vdots&\\
Z_{\epsilon,j+1} &:=& 2Z_{\epsilon,j}-Z_{\epsilon,j-1}  -\frac{\Delta x^2\epsilon^b}{\delta} Q =
Z_{\epsilon,1}-k_{j+1}\frac{\Delta x^2\epsilon^b}{\delta} Q\\
\end{eqnarray*}
for $j=1,...,M-1$, where $k_1=0$ and $k_{j+1}:=k_j+j$.  Therefore,  $Z_{\epsilon,1} > ...> Z_{\epsilon,M}$. 
Finally we have from the last equation of (\ref{1DStSt:sys})
\begin{equation*}
Z_{\epsilon,1} = (1-\epsilon) +\left(3k_M-k_{M-1}+1\right) \frac{\Delta x^2\epsilon^b}{2\delta} Q. 
\end{equation*}
For small enough $\epsilon$ and $\Delta x$, the second term on the right hand side is dominated by the first term, i.e. $1> Z_{\epsilon,1}> ... > Z_{\epsilon,M}$.
\end{proof}

\begin{theorem} Let $\mathbf{U}^\epsilon(t)$ be a solution of (\ref{31}) with $\mathbf{U}^\epsilon(0)<\rho \mathbf{1}$ for some $\rho$ with $0<\rho<1$. Then for sufficiently small $\epsilon$ and $\Delta x$ it holds $\mathbf{U}^\epsilon(t)<\mathbf{1}$.
\end{theorem}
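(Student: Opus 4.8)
The plan is to assemble Lemmas \ref{1:lem}--\ref{4:lem} into a single chain of componentwise inequalities; essentially all the analytical work has already been carried out in those lemmas, so the theorem follows by composition once the various smallness restrictions on the parameters are reconciled. First I would fix the constant $Q$ furnished by Proposition \ref{pro1}, which is independent of $\epsilon$ and satisfies $Q\geq \|\mathfrak{R}_U(\mathbf{C}^\epsilon)\mathbf{U}^\epsilon\|_\infty$, so that the barrier system (\ref{veps}) and its solution $\mathbf{V}^\epsilon$ are well defined. Since the hypothesis $\mathbf{U}^\epsilon(0)<\rho\mathbf{1}$ gives in particular $\mathbf{U}^\epsilon(0)\leq\rho\mathbf{1}=\mathbf{V}^\epsilon(0)$, Lemma \ref{1:lem} applies and yields $\mathbf{U}^\epsilon(t)<\mathbf{V}^\epsilon(t)$ for all $t>0$.

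Next I would pass to the reduced one-dimensional description. By Lemma \ref{2:lem} the components of $\mathbf{V}^\epsilon$ depend only on the vertical index, $V_{\epsilon,p}(t)=V_{\epsilon,j_p}(t)$ with $j_p$ the second coordinate of $\pi^{-1}(p)$, so the full two-dimensional bound reduces to controlling the $M$ functions $V_{\epsilon,j}$ governed by (\ref{1D:sys}). Provided $\epsilon$ is small enough that $\rho\leq 1-\epsilon$ (possible since $\rho<1$ is fixed), Lemma \ref{3:lem} gives $V_{\epsilon,j}(t)\leq Z_{\epsilon,j}$ for all $t>0$, and Lemma \ref{4:lem} gives $Z_{\epsilon,j}<1$ once $\epsilon$ and $\Delta x$ are sufficiently small. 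Chaining these, for every grid index $p$ and every $t>0$,
\[
U_{\epsilon,p}(t) < V_{\epsilon,p}(t) = V_{\epsilon,j_p}(t) \leq Z_{\epsilon,j_p} < 1,
\]
that is $\mathbf{U}^\epsilon(t)<\mathbf{1}$; the case $t=0$ is immediate from $\mathbf{U}^\epsilon(0)<\rho\mathbf{1}<\mathbf{1}$.

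Finally I would make explicit that the smallness requirements on $\epsilon$ (for the $Q$-bound of Proposition \ref{pro1}, for $\rho\leq 1-\epsilon$, and for Lemma \ref{4:lem}) together with the mesh condition on $\Delta x$ are compatible: one simply takes $\epsilon$ below the minimum of the three thresholds and $\Delta x$ below the Lemma \ref{4:lem} threshold. Given the lemmas, there is no real obstacle in the composition itself; the genuine content lies in Lemma \ref{4:lem}, where one must verify that the steady barrier $\mathbf{Z}^\epsilon$ stays strictly below unity despite the $\delta\epsilon^{-\beta}$ blow-up of the diffusion coefficient. The mechanism I would emphasise is that this very blow-up works in our favour: inverting the stiff operator in (\ref{1DStSt:sys}) multiplies the source $Q$ by $\Delta x^2\epsilon^{\beta}/\delta$, so the deviation of $Z_{\epsilon,j}$ from the boundary value $1-\epsilon$ is only of order $\Delta x^2\epsilon^{\beta}Q/\delta$ and vanishes as $\epsilon,\Delta x\to 0$, pinning the profile below $1$.
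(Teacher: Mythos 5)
Your proposal is correct and follows essentially the same route as the paper, whose proof is precisely the chain $0\leq \mathbf{U}^\epsilon(t) < \mathbf{V}^\epsilon(t) < \hat{\mathbf{Z}}^\epsilon < \mathbf{1}$ obtained by composing Proposition \ref{pro1} with Lemmas \ref{1:lem}--\ref{4:lem}, where $\hat{\mathbf{Z}}^\epsilon$ is the two-dimensional lift of $\mathbf{Z}^\epsilon$ via $\hat Z_{\epsilon,\pi(i,j)}=Z_{\epsilon,j}$. Your additional remarks on reconciling the several smallness thresholds for $\epsilon$ and $\Delta x$ are a sensible elaboration of what the paper leaves implicit.
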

\begin{proof} Proposition \ref{pro1} and Lemmas \ref{1:lem}-\ref{4:lem} imply directly that for small enough $\epsilon$ and $\Delta x$ the following inequalities hold
\begin{equation*}
0\leq \mathbf{U}^\epsilon(t) < \mathbf{V}^\epsilon(t)<\hat{\mathbf{Z}}^\epsilon <\mathbf{1},
\end{equation*}
where $\hat{\mathbf{Z}}^\epsilon=(\hat Z_{\epsilon,1},...,\hat Z_{\epsilon,NM})^T$ with $\hat Z_{\epsilon,\pi(i,j)}:=Z_{\epsilon,j}$ for $1\leq i\leq N$, $1\leq j\leq M$.
\end{proof}

\begin{theorem} Let $\mathbf{U}(0)\leq \rho\mathbf{1}$ for some $0<\rho<1$,  and let $T>0$. For $\epsilon>0$, let $(\mathbf{U}^{\epsilon}(t), \mathbf{C}^\epsilon(t))$ be the solution of (\ref{31}) with $\mathbf{U}^{\epsilon}(0)=\mathbf{U}(0), \mathbf{C}^{\epsilon}(0)=\mathbf{C}(0)$. Then $(\mathbf{U}^{\epsilon}(t),\mathbf{C}^\epsilon(t))$, $0\leq t\leq T$ converges to the solution of (\ref{220}) as $\epsilon \rightarrow 0$.
\end{theorem}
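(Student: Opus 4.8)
The plan is to fix a finite horizon $[0,T]$ and argue by compactness followed by uniqueness. First I would collect the uniform bounds already in hand: Proposition \ref{pro1} gives a bound $0\le \mathbf{U}^\epsilon \le \mathbf{1}+\mathbf{q}$ that is independent of $\epsilon$, and together with the tangent-condition argument there we also have $0\le \mathbf{C}^\epsilon \le \mathbf{1}$. Thus the family $\{(\mathbf{U}^\epsilon,\mathbf{C}^\epsilon)\}_{\epsilon}$ is uniformly bounded on $[0,T]$. The goal is to extract a limit $(\mathbf{U},\mathbf{C})$, show it solves the unregularised system (\ref{220}), and then invoke uniqueness to conclude that the full family, not merely a subsequence, converges.

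The decisive ingredient is a separation from the singularity that is uniform in $\epsilon$: I would establish the existence of $\eta=\eta(T)>0$, independent of $\epsilon$, such that $\mathbf{U}^\epsilon(t)\le (1-\eta)\mathbf{1}$ for all $t\in[0,T]$ and all small $\epsilon$. The preceding theorem already yields $\mathbf{U}^\epsilon<\mathbf{1}$ via the barrier $\hat{\mathbf{Z}}^\epsilon$, but that barrier degrades to $\mathbf{1}$ as $\epsilon\to0$ (recall $Z_{\epsilon,1}=1-\epsilon+O(\epsilon^\beta\Delta x^2)$ in Lemma \ref{4:lem}), so it does not by itself give a uniform $\eta$. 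To obtain a uniform $\eta$ I would transport to the discrete level the corresponding $L^\infty$ estimate for the regularised PDE in \cite{EZE:2009}, which provides $u_\epsilon\le 1-\xi$ with $\xi>0$ independent of $\epsilon$, using the same super-/sub-solution machinery as in Lemmas \ref{1:lem}--\ref{4:lem} but with a time-dependent barrier tuned to the finite horizon rather than the steady state $\mathbf{Z}^\epsilon$. On the region $\{\mathbf{U}\le(1-\eta)\mathbf{1}\}$ the coefficient $D_\epsilon$ is smooth, bounded, and Lipschitz uniformly in $\epsilon$, and $D_\epsilon\to D$ uniformly on the compact interval $[0,1-\eta]$; consequently the right-hand sides of (\ref{31}) are uniformly bounded on $[0,T]$. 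This bounds $\tfrac{d}{dt}\mathbf{U}^\epsilon$ and $\tfrac{d}{dt}\mathbf{C}^\epsilon$ uniformly, so the family is equicontinuous and Arzel\`a--Ascoli delivers a subsequence converging uniformly on $[0,T]$ to some continuous $(\mathbf{U},\mathbf{C})$ with $\mathbf{U}\le(1-\eta)\mathbf{1}$.

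Next I would pass to the limit. Writing (\ref{31}) in integral (Duhamel) form, each flux entry of $\mathfrak{D}_\epsilon(\mathbf{U}^\epsilon)\mathbf{U}^\epsilon$ is a fixed continuous function of the relevant nodal values through $D_\epsilon$; since these nodal values converge uniformly into $[0,1-\eta]$ and $D_\epsilon\to D$ uniformly there, the fluxes converge uniformly to the corresponding fluxes of $\mathfrak{D}(\mathbf{U})\mathbf{U}$. The reaction matrices $\mathfrak{R}_U,\mathfrak{R}_C$ and the boundary vector $\mathbf{b}$ are independent of $\epsilon$ and Lipschitz on the bounded range of $(\mathbf{U},\mathbf{C})$, so those terms pass to the limit directly. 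Taking $\epsilon\to0$ in the integral identity therefore shows that $(\mathbf{U},\mathbf{C})$ satisfies the integral form of (\ref{220}); being continuous it is in fact a continuously differentiable solution.

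Finally, uniqueness upgrades subsequential to full convergence. On the invariant box $\{0\le \mathbf{U}\le(1-\eta)\mathbf{1},\ 0\le\mathbf{C}\le\mathbf{1}\}$ the right-hand side of (\ref{220}) is Lipschitz in $(\mathbf{U},\mathbf{C})$ --- the only potential difficulty, the factor $D$, is evaluated away from $u=1$ --- so by Picard--Lindel\"of the limit problem has a unique solution there. Hence every subsequence of $\{(\mathbf{U}^\epsilon,\mathbf{C}^\epsilon)\}$ has a further subsequence converging to this same limit, which forces convergence of the whole family. Alternatively, one can bypass compactness and estimate $\|\mathbf{U}^\epsilon-\mathbf{U}\|+\|\mathbf{C}^\epsilon-\mathbf{C}\|$ directly by Gr\"onwall, the inhomogeneity being controlled by $\|D_\epsilon-D\|_{C^0[0,1-\eta]}\to0$, which even yields an explicit convergence rate. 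The main obstacle throughout is the uniform-in-$\epsilon$ gap $\eta$: without it the diffusion coefficient is unbounded along the family and neither equicontinuity nor the limit passage is available, which is exactly why the delicate part is transferring the PDE-level bound $u_\epsilon\le 1-\xi$ of \cite{EZE:2009} to the semi-discrete setting.
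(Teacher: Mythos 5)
Your proposal follows the same overall strategy as the paper---uniform bounds, extraction of a convergent (sub)sequence, passage to the limit in the equation, identification of the limit as a solution of (\ref{220})---but it is substantially more careful and supplies ingredients that the paper's own proof omits. The paper's argument is: (i) boundedness of $(\mathbf{U}^\epsilon,\mathbf{C}^\epsilon)$ yields a componentwise convergent sequence $\epsilon_n\to 0$; (ii) the triangle inequality (\ref{pro}) together with ``boundedness of $\mathbf{U}^{\epsilon}$ and continuity of $D_{\epsilon}$'' gives $\mathfrak{D}_{\epsilon}(\mathbf{U}^{\epsilon})\mathbf{U}^{\epsilon}\to\mathfrak{D}(\mathbf{U})\mathbf{U}$. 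It never mentions equicontinuity, does not explain why one subsequence works for all $t$ simultaneously, does not address upgrading subsequential to full convergence, and---most importantly---does not establish the uniform-in-$\epsilon$ separation from the singularity that you correctly single out as the crux. Without an $\eta>0$ independent of $\epsilon$ with $\mathbf{U}^\epsilon\le(1-\eta)\mathbf{1}$, the first term on the right of (\ref{pro}) need not vanish, since $\mathfrak{D}_{\epsilon}(\mathbf{U}^{\epsilon})$ can blow up like $\epsilon^{-\beta}$ if some component of $\mathbf{U}^\epsilon$ approaches $1$ as $\epsilon\to 0$; and, exactly as you observe, the barrier $\hat{\mathbf{Z}}^\epsilon$ of the preceding theorem tends to $\mathbf{1}$ and so provides no uniform gap. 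Your Arzel\`a--Ascoli step, the limit passage in integral form, and the Picard--Lindel\"of/uniqueness argument on the invariant box are all sound and would make rigorous what the paper only sketches; the alternative Gr\"onwall route is also viable once the gap $\eta$ is in hand.

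The one place where your argument remains a promissory note is that uniform gap itself: you state that you \emph{would} establish $\mathbf{U}^\epsilon\le(1-\eta)\mathbf{1}$ on $[0,T]$ by transporting the estimate $u_\epsilon\le 1-\xi$ of \cite{EZE:2009} to the semi-discrete level via a time-dependent barrier, but you do not construct that barrier, and it is not obvious that the PDE-level argument discretises without further work. Since, as you say yourself, everything else hinges on this step, the proposal is a correct and well-targeted plan rather than a finished proof---though it should be said that the paper's own proof does not supply this step either.
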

\begin{proof} 
We have shown that $\textbf{U}^{\epsilon}(t)$ and $\textbf{C}^{\epsilon}(t)$ are bounded for small values of $\epsilon$ and $\Delta x$. Boundedness of $\textbf{U}^{\epsilon}(t)$ and $\textbf{C}^{\epsilon}(t)$ implies that there is a sequence $\epsilon_n$ that goes to zero when $n \rightarrow \infty$ such that $\textbf{U}^{\epsilon_n}(t)$ and $\textbf{C}^{\epsilon_n}(t)$ converge to some $\textbf{U}(t)$ and $\textbf{C}(t)$ component wise. We can now pass to the limit $\epsilon \rightarrow 0$ in the regular semi-discrete system (\ref{31}) and claim that $(\textbf{U},\textbf{C})$ are the desired solutions of degenerate semi-discrete problem (\ref{220}). In order to prove this hypothesis for $\textbf{U}$ it is sufficient to verify that $\mathfrak{D}_{\epsilon}(\textbf{U}^{\epsilon})\textbf{U}^{\epsilon} \rightarrow \mathfrak{D}(\textbf{U})\textbf{U}$ if $\epsilon \rightarrow 0$. We have
\begin{equation}\label{pro}
\parallel\mathfrak{D}_{\epsilon}(\textbf{U}^{\epsilon})\textbf{U}^{\epsilon}-\mathfrak{D}(\textbf{U})\textbf{U}\parallel_{\infty}\leq
\parallel\mathfrak{D}_{\epsilon}(\textbf{U}^{\epsilon})(\textbf{U}^{\epsilon}-\textbf{U})\parallel_{\infty}
+\parallel(\mathfrak{D}_{\epsilon}(\textbf{U})-\mathfrak{D}(\textbf{U}))\textbf{U}\parallel_{\infty}
\end{equation}
Boundedness of $\textbf{U}^{\epsilon}(t)$ and continuity of $D_{\epsilon}(u)$ imply that each term in the right hand side of inequality (\ref{pro}) tends to zero if $\epsilon \rightarrow 0$. This proves that $\textbf{U}(t)$ is indeed the solution of the biomass equation of  the degenerate semi-discrete system (\ref{220}).
In the same manner we can also show that $\textbf{C}(t)$ is the solution of the substrate equation of  (\ref{220}).

\end{proof}

\section{Numerical method}\label{sec4}
\subsection{Time integration}

In order to obtain a numerical approximation of (\ref{21}) we will solve the regularized semi-discrete problem, (\ref{31}) to approximate (\ref{regPDE:eq}), and pass $\epsilon$ to 0.  The regularized system (\ref{31}) satisfies a Lipschitz condition and has bounded solutions. This suggests the application of standard ODE solvers. 

In typical biofilm applications, different time scales for the substrate equation and the biomass equation induce stiffness, which can be exacerbated by non-linear diffusion effects if the biomass approaches unity somewhere.
We use a time adaptive, error controlled, embedded  Rosenbrock-Wanner method (ROW), more specifically ROS3PL, a third order method with 4 stages \cite{R:2013}. Rosenbrock-Wanner methods require the solutions of linear systems. In our case the coefficient matrix of these systems are sparse and non-symmetric. We use the stabilised bi-conjugate gradient method \cite{V:1992} to solve them. More specifically we use a routine from the SPARSKIT library \cite{S:1994}, that was prepared for parallel execution using OpenMP  in \cite{ME:2010}.

\subsection{Spatial discretisation and gradient blow-up at the interface}

The ordinary differential equation (\ref{220}) was derived by applying a space discretisation to the underlying partial differential equation (\ref{21}). Since at the biofilm/water interface the solutions of (\ref{21}) are known to have blow-up of the gradient, the question arises to which extent our spatial discretisation introduces smearing around the interface. We investigate this, applying our method to a related test problem with known exact solution, that has the same phenomenon.
Our test problem is the Porous Medium Equation with linear source term \cite{PZ:2011},
\begin{equation}
\frac{\partial u}{\partial t}=\nabla(u^{m}\nabla u)+ku,\quad,m>1\ \label{41}
\end{equation}
which admits the self-similar Barenblatt solution \cite{PZ:2011}
\begin{equation}
u(t,x,y)=e^{kt}\tau^{-(m+1)^{-1}}\left[\frac{m}{4(m+1)}\left(k_{0}^{2}-\frac{(x^2+y^2)}{\tau^{1/(m+1)}}\right)\right]_+^{\frac{1}{m}},\label{42}
\end{equation}
with $\tau=\frac{1}{km}e^{kmt}$ and $k_0^2=r_0^{2}(\frac{1}{km}e^{kmt_0})^{-(m+1)^{-1}}$, where $r_0$ is the radius of the initial spherical colony and $t_0$ is the initial time. Notation $u_+$ is defined as $u_+:=\max(u,0)$. This solution is induced by a Dirac delta function as initial data. For our numerical test we chose as initial data function (\ref{42}) evaluated at time  $t_0=0.1$ and $r_0=0.1$. The values of $m$ and $k$ used in our simulations are chosen as $m=4,~k=3$.

For our test simulations, we pose the homogeneous Neumann boundary condition on the boundaries and initially one semi-spherical colony is placed in the center of the domain. Note that (\ref{42}) and the solution of the biofilm model satisfy this boundary condition as well as the homogeneous Dirichlet condition up until the moving interface reaches the boundary somewhere. We terminated the simulations before this occurs.

After regularisation, (\ref{41}) becomes
\begin{equation}
\frac{\partial u_{\epsilon}}{\partial t}=\nabla\big((u+\epsilon)^m\nabla u_{\epsilon}\big)+ku_{\epsilon} \label{41eps}
\end{equation}
in analogy to the regularization in (\ref{25}).

\begin{table} 
\centering
\caption{ Least square error between $\mathbf{U}^\epsilon$ and the the non-regularised numerical solution with $\epsilon=0$, $E_0^{\epsilon}:=\frac{1}{256\times 256}\parallel \textbf{U}^{\epsilon}-\textbf{U}^{0} \parallel_{2}$, and the error between regularised and Barenblatt solutions, $E^{\epsilon}:=\frac{1}{256\times 256}\parallel \mathbf{U}-\mathbf{U}^{\epsilon}\parallel_{2}$, at $t=1$. The cell resolution is $N=M=256$ and the tolerance of the ROW method is set at $TOL=1e-7$.}\label{table2eps}
 \begin{tabular}{llllll}
 \hline
  $\epsilon$ &\qquad\qquad$E_0^{\epsilon}$&\qquad\qquad$E^{\epsilon}$\\
\hline\\
  $10^{-3}$&~~\quad$0.5456777671\times10^{-5}$&~~\quad$0.2670409320\times10^{-6}$\\
  $10^{-4}$&~~\quad$0.5458369969\times10^{-6}$&~~\quad$0.2332513375\times10^{-6}$\\
  $10^{-5}$&~~\quad$0.5483556720\times10^{-7}$&~~\quad$0.2302420952\times10^{-6}$\\
  $10^{-6}$&~~\quad$0.5489546560\times10^{-8}$&~~\quad$0.2299452153\times10^{-6}$\\
  $10^{-7}$&~~\quad$0.5511348599\times10^{-9}$&~~\quad$0.2299155176\times10^{-6}$\\
  \hline
  \end{tabular}
\end{table}

We solve (\ref{41eps}) on a uniform grid  of size $256\times 256$ for several choices of $\epsilon$ and compare the results against the exact solution (\ref{42}) and the numerical solution obtained with the choice $\epsilon=0$, i.e. the solution of (\ref{41}) in Table \ref{table2eps}. Reported are the 2-norm differences between the regularised numerical solutions and the Barenblatt solution,  $E^{\epsilon}:=\frac{1}{256\times 256}\parallel \mathbf{U}-\mathbf{U}^{\epsilon}\parallel_{2}$, and between $\mathbf{U}^\epsilon$ and  $\mathbf{U}^0=\mathbf{U}^\epsilon\mid_{\epsilon=0}$, i.e  $E_0^{\epsilon}:=\frac{1}{256\times 256}\parallel \mathbf{U}^\epsilon-\mathbf{U}^0\parallel_{2}$. We observe that with decreasing $\epsilon$ the error $E_0^\epsilon$  decreases to zero, i.e. convergence of the regularisation is confirmed. The error $E^\epsilon$ converges as well, but to a positive value. This suggests that error due to spatial discretisation is larger than the regularisation error.

\begin{table}
\centering
\caption{Error between the Barenblatt solution (\ref{42}) and the numerical solution with $\epsilon=0$  of PME (\ref{41eps}), $E_0^{N}=\frac{1}{N^2}\parallel \mathbf{U}-\mathbf{U}^{0} \parallel_{2}$, for different number of grid cells at $t=1$.}\label{table2}
\begin{tabular}{ll}
 \hline
  $N=M$ &\quad\quad\quad$E_0^{N}$\\
\hline\\
  $32$&~~\quad$1.0314434\times10^{-3}$\\
  $64$&~~\quad$1.9763426\times10^{-4}$\\
  $128$&~~\quad$8.7552260\times10^{-5}$\\
  $256$&~~\quad$2.2991221\times10^{-5}$\\
  $512$&~~\quad$6.8993723\times10^{-6}$\\
  $1024$&~~\quad$2.1955107\times10^{-6}$\\
  \hline
  \end{tabular}
\end{table}

\begin{figure*}
\centering
\begin{tabular}{ c c c}
\includegraphics[width=0.45\textwidth]{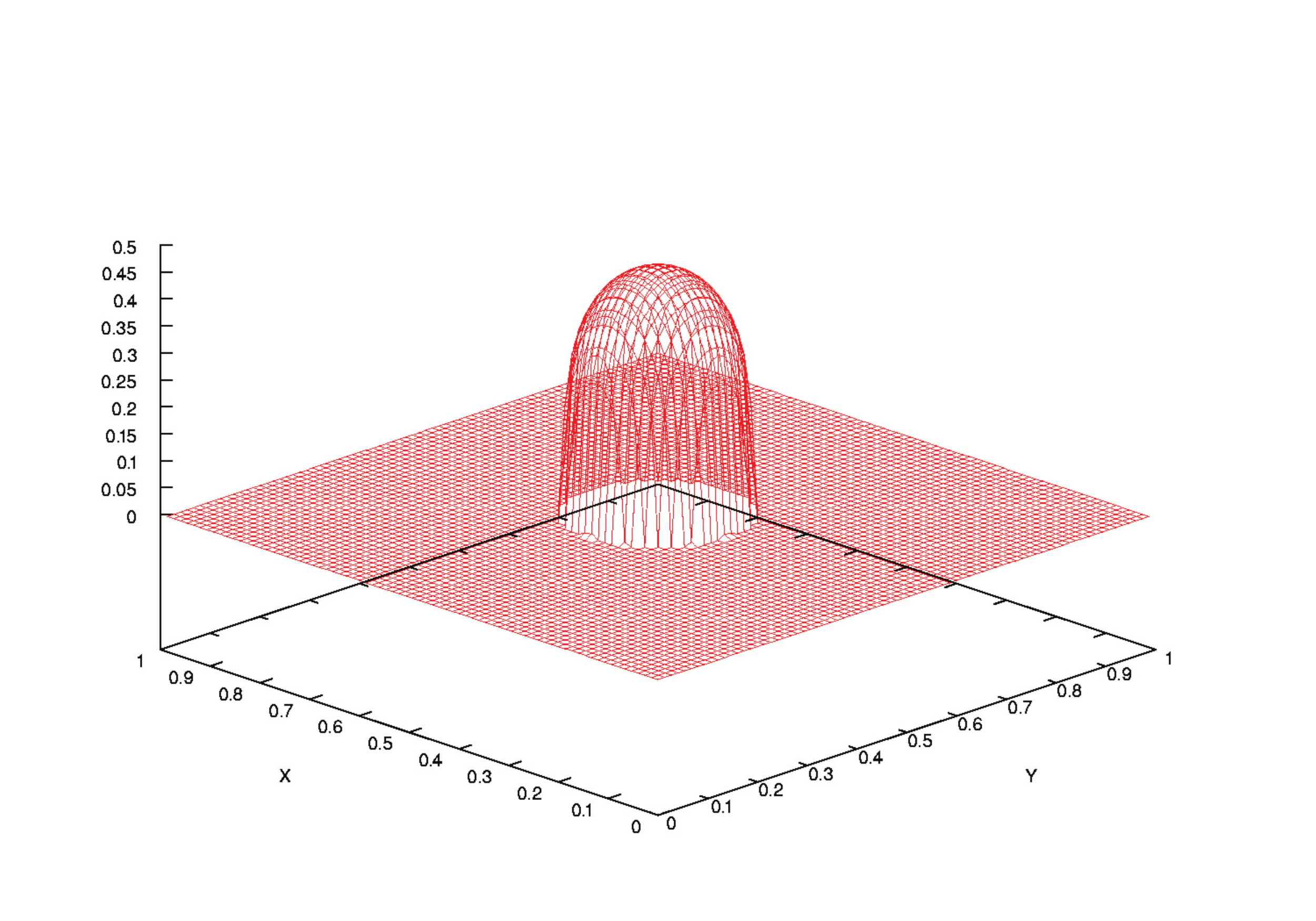} & \includegraphics[width=0.45\textwidth]{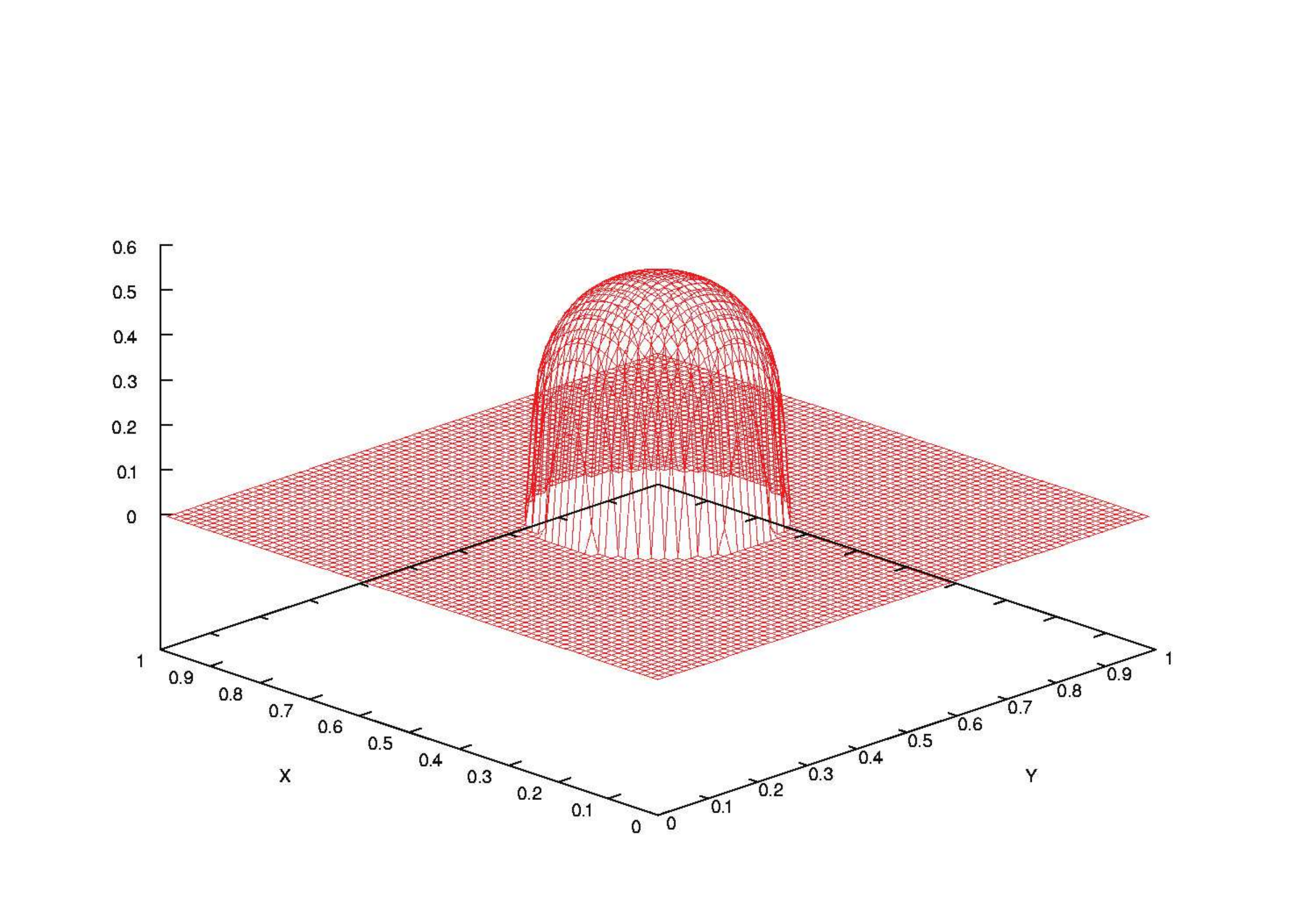}&\\
 \includegraphics[width=0.45\textwidth]{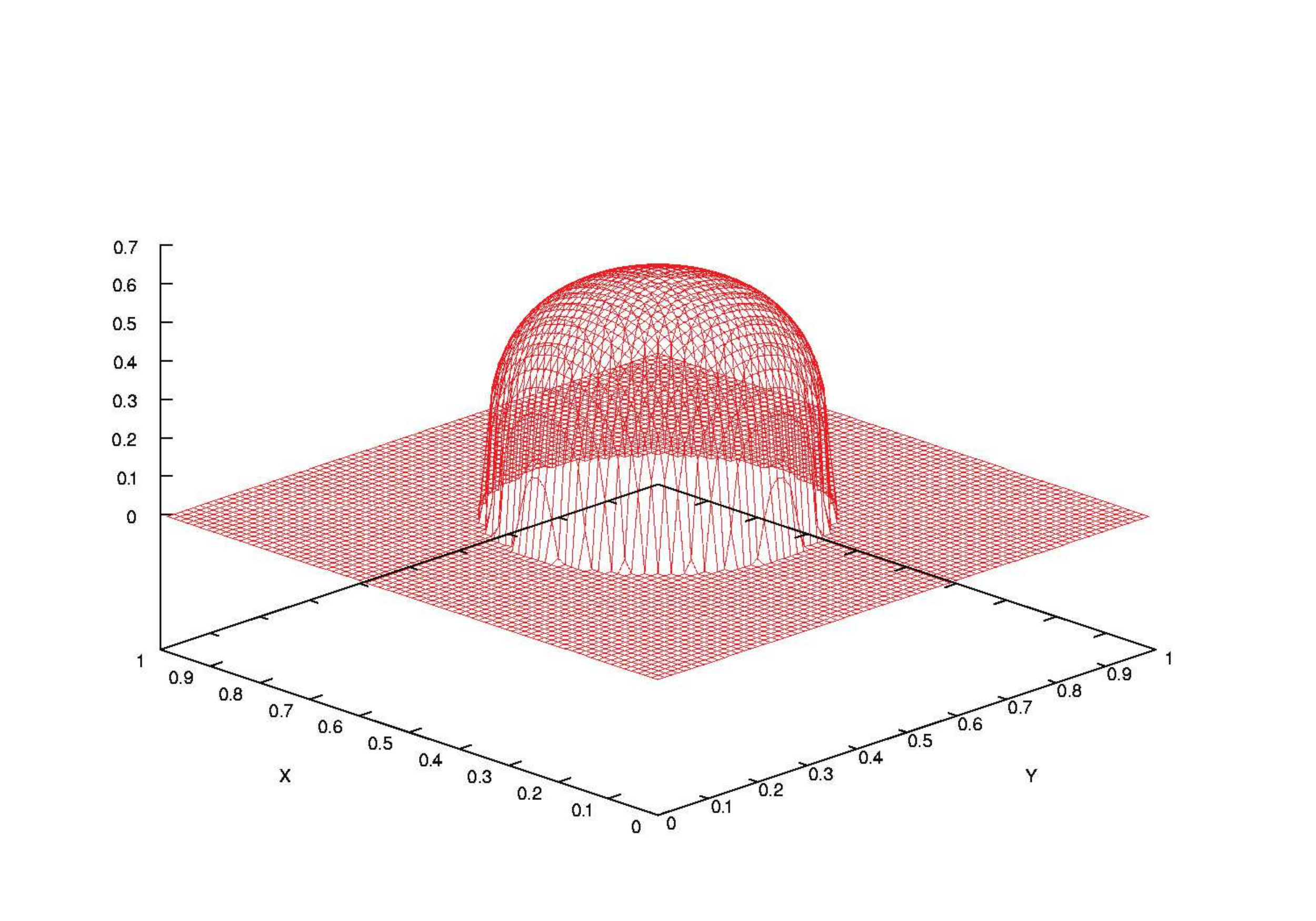} & \includegraphics[width=0.45\textwidth]{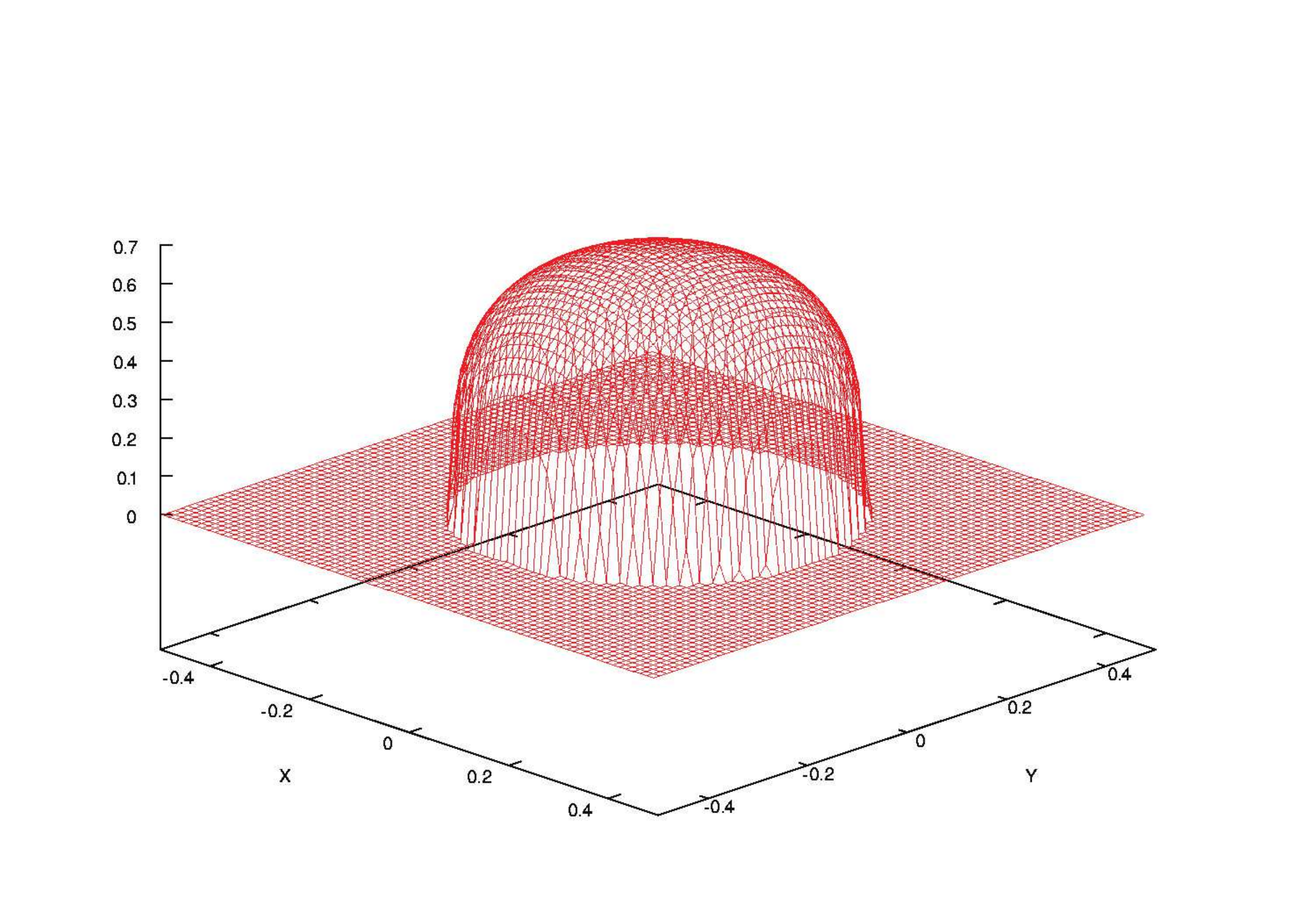}&\\
   \end{tabular}
    \caption{Snapshots of numerical solution of PME (\ref{41})  at $t=0.25,0.5,0.75,1$ from top left to the bottom right. The cell resolution is $N=M=256$ and the tolerance of the ROW method is $TOL=1e-7$ and the regularization parameter is $\epsilon=0$.}
   \label{fig1}
\end{figure*}

\begin{figure*}
\begin{tabular}{ c c c}
  \includegraphics[width=0.46\textwidth]{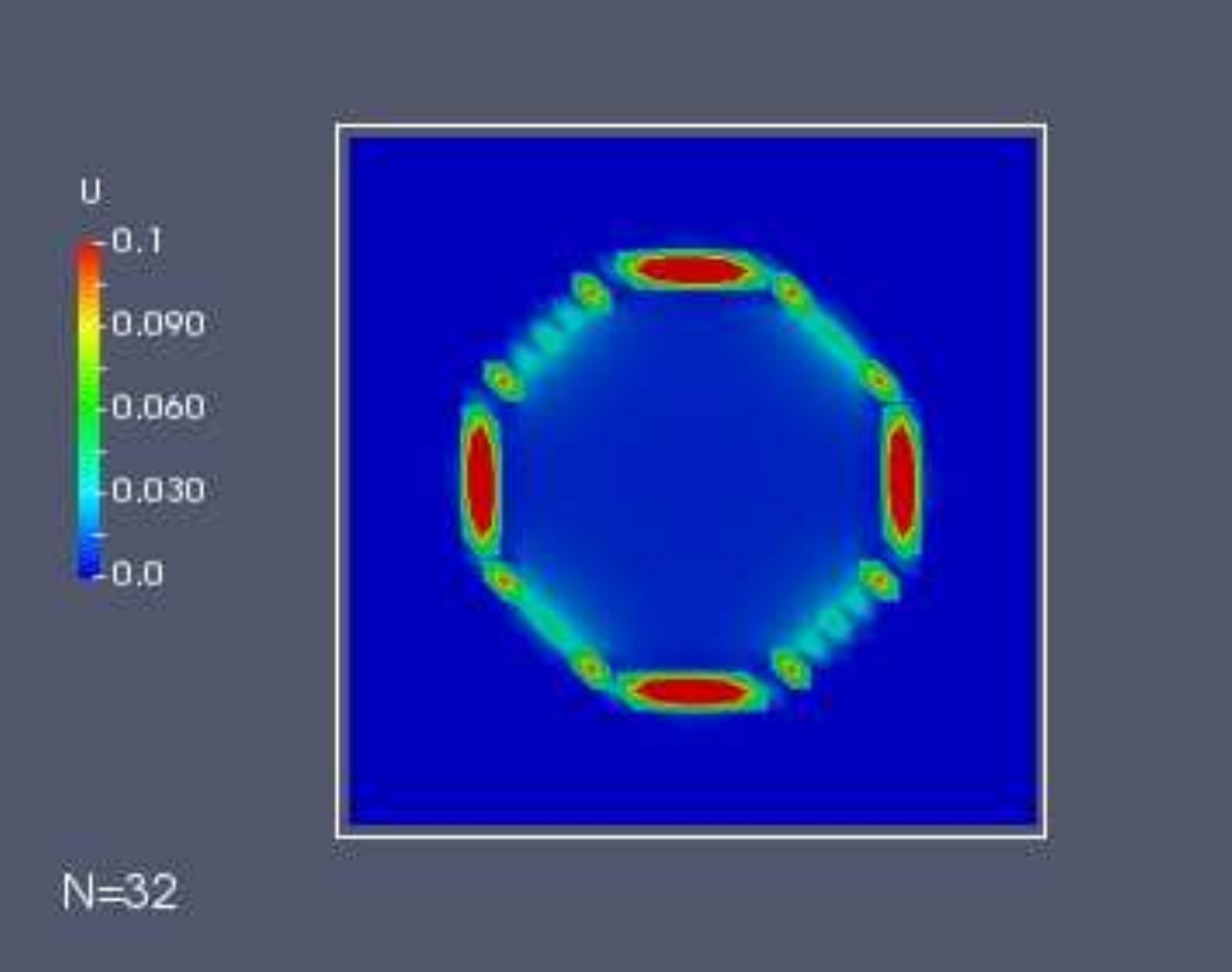} & \includegraphics[width=0.46\textwidth]{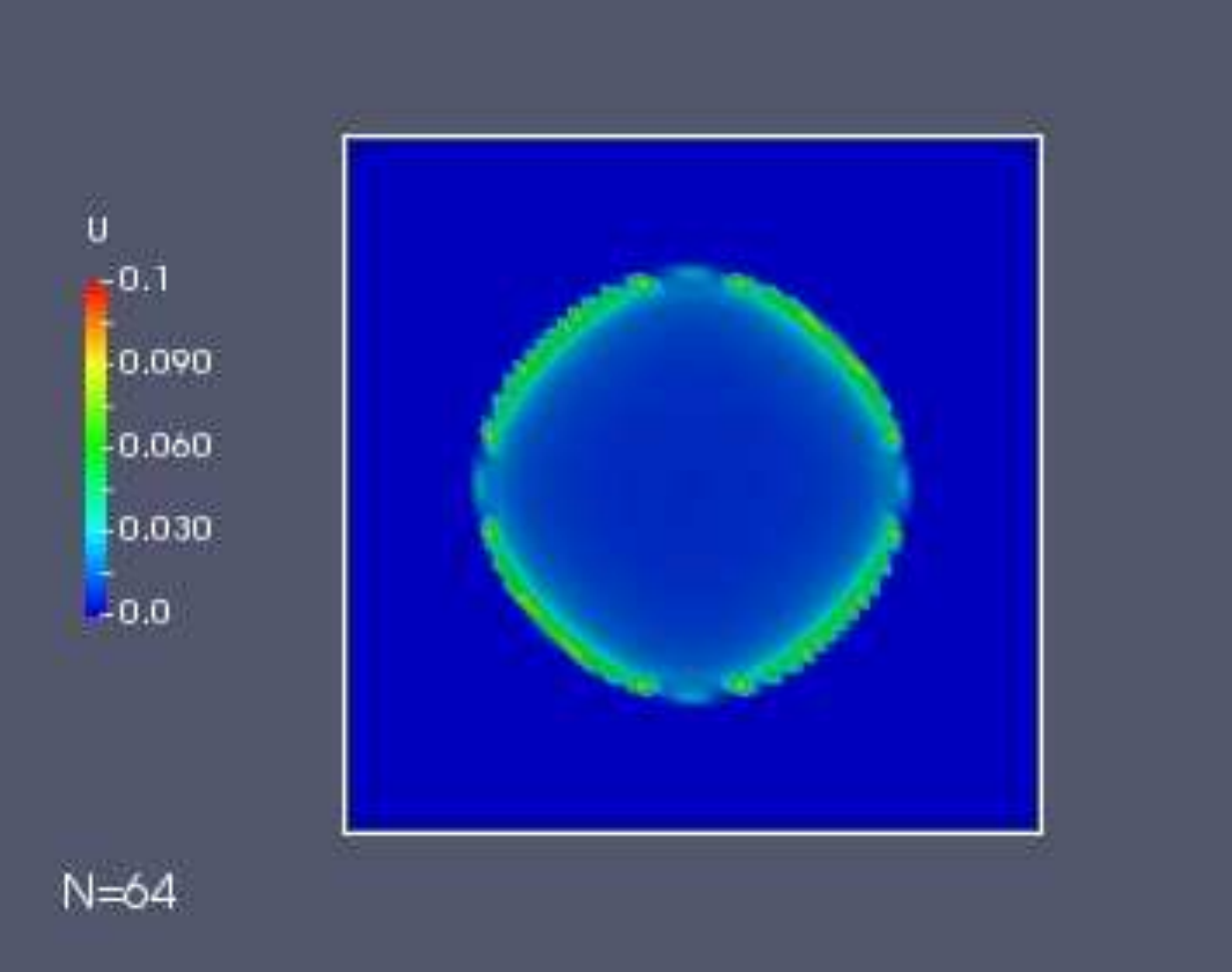}\\
  \includegraphics[width=0.46\textwidth]{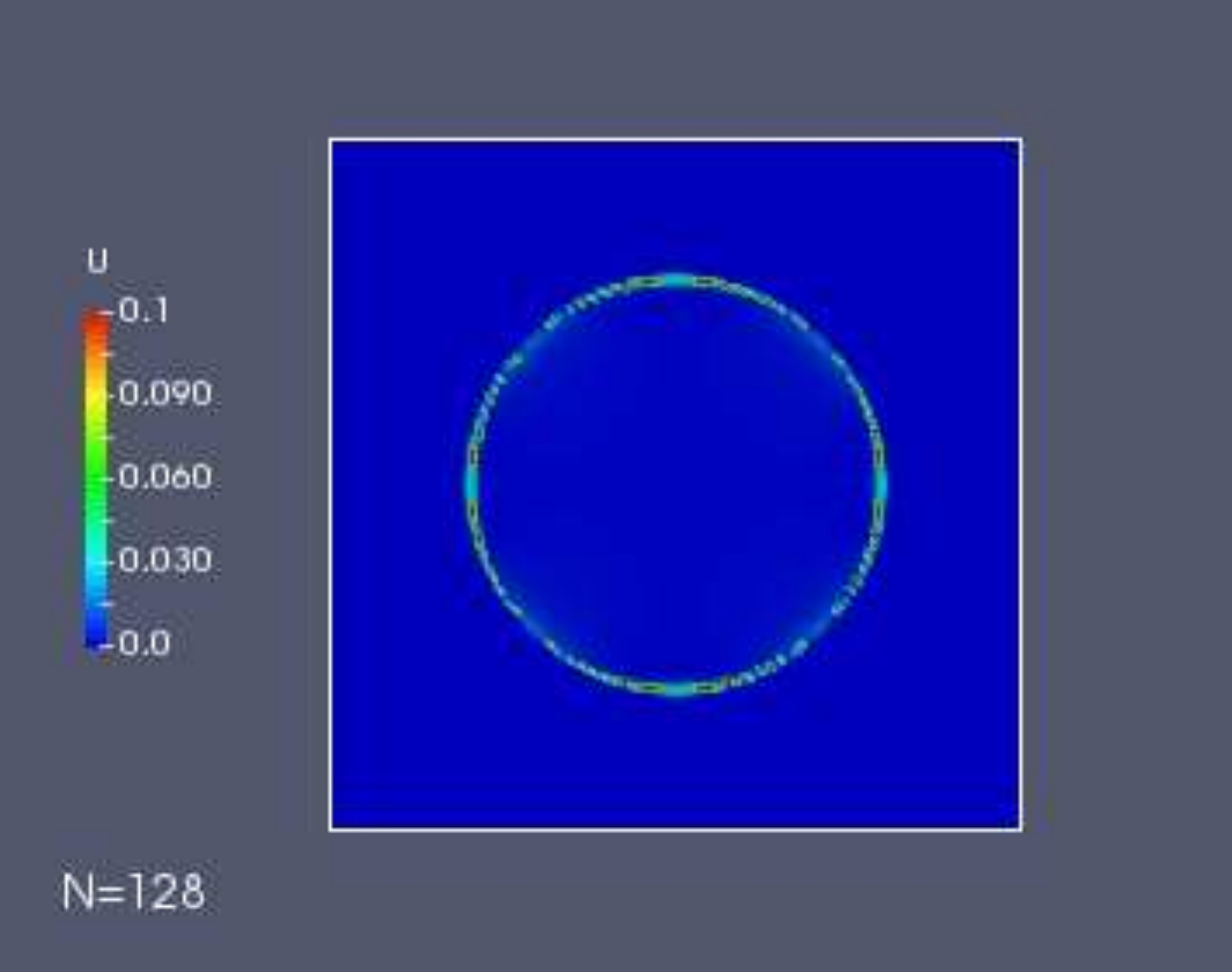} & \includegraphics[width=0.46\textwidth]{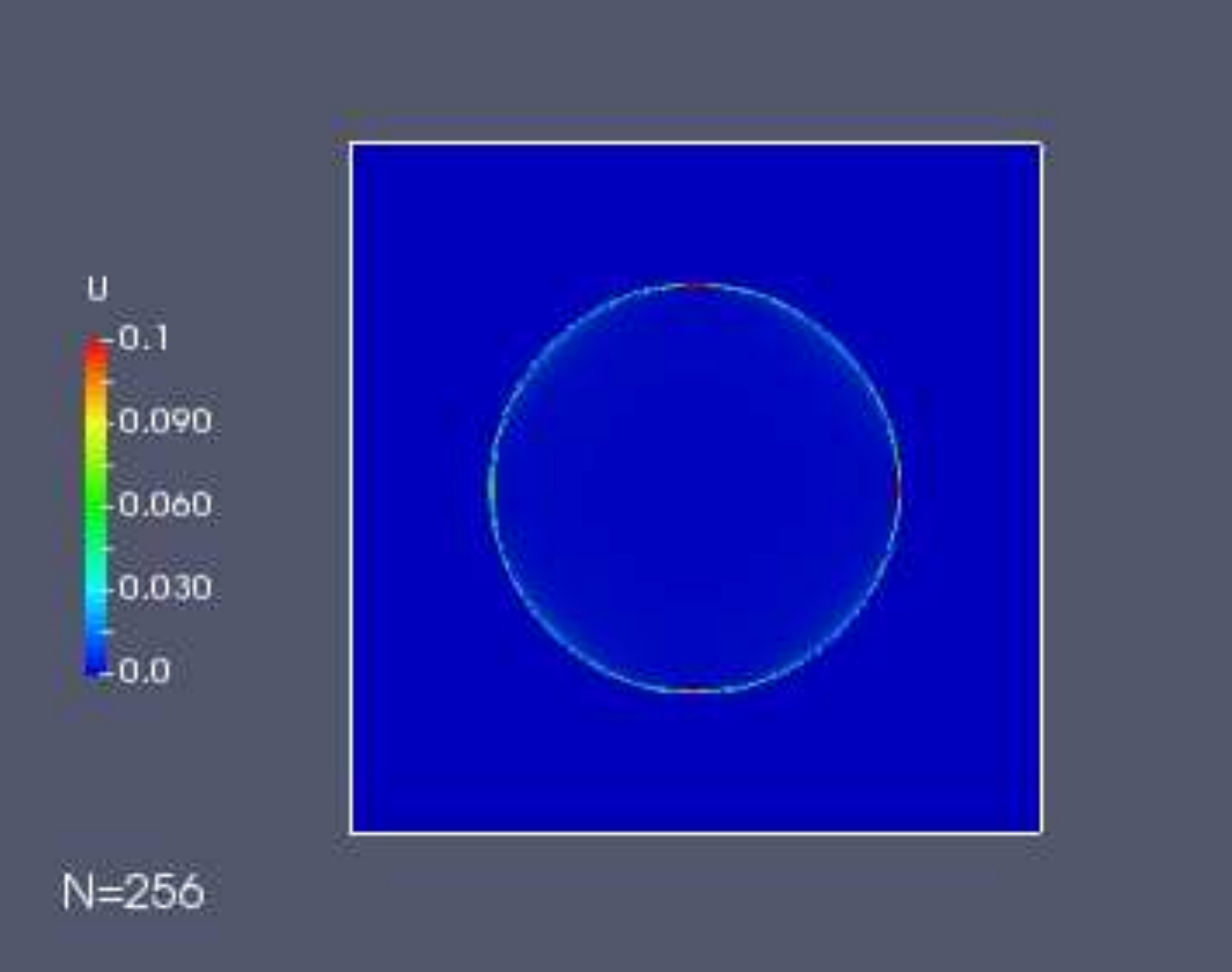}\\
  \includegraphics[width=0.47\textwidth]{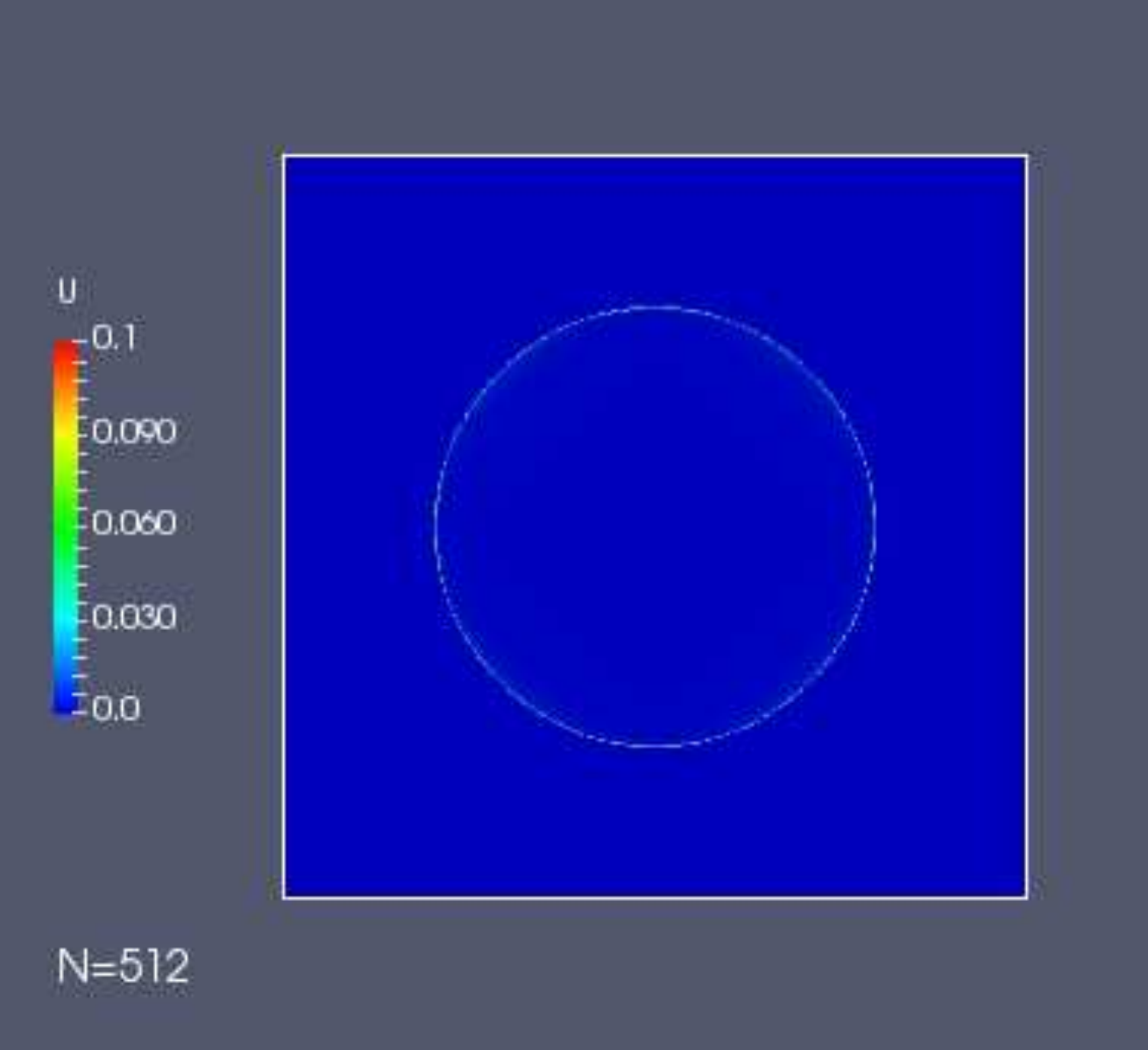} & \includegraphics[width=0.46\textwidth]{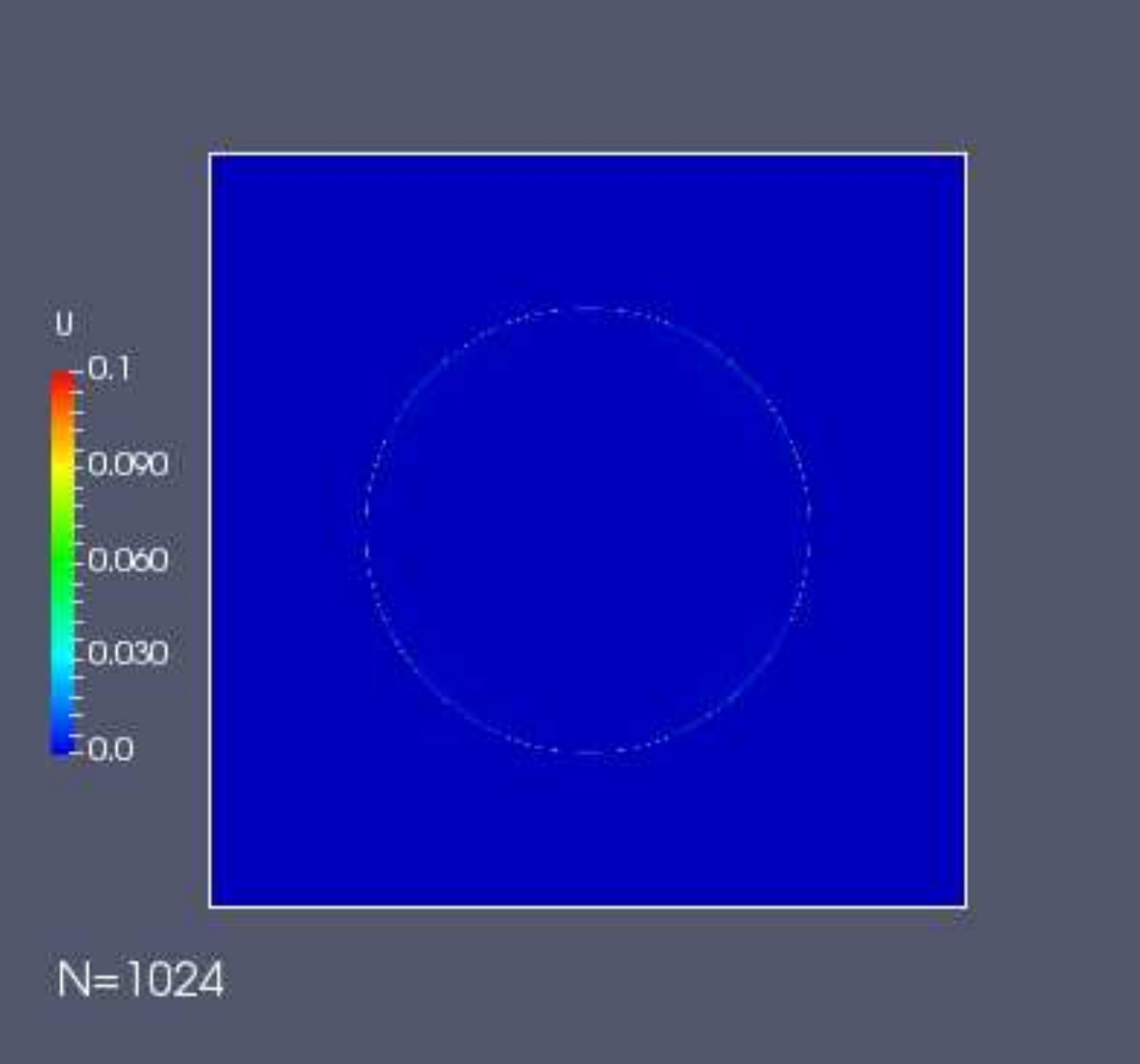}\\
   \end{tabular}
   \caption{Graphical result of difference between the numerical and the exact solutions of PME (\ref{41}) for different numbers of grid cells.}
   \label{fig3}
\end{figure*}

\begin{figure}
\centering
 \includegraphics[width=0.5\textwidth]{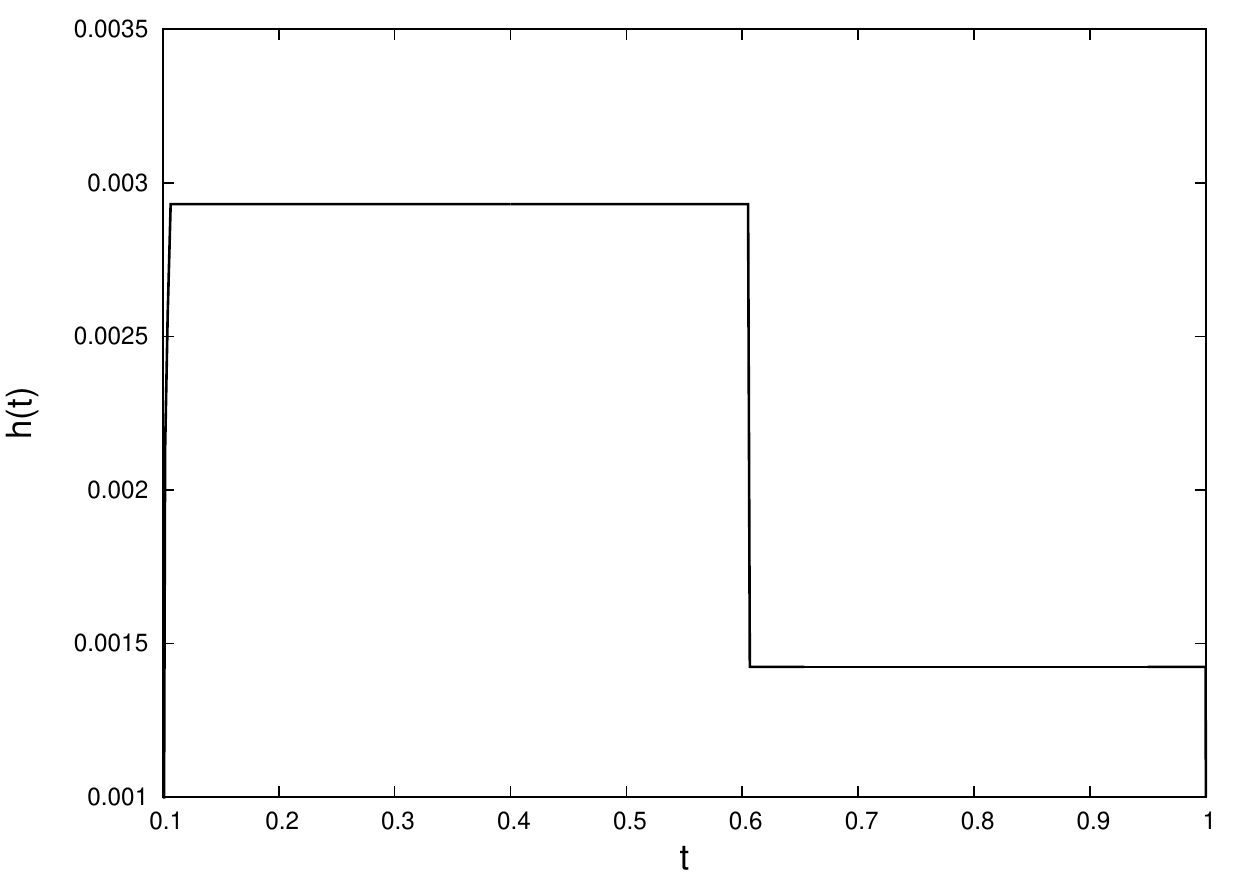}
\caption{Time-step size vs $t$ for PME (\ref{41}).} \label{fig2}
\end{figure}

Results of a computational convergence test that computes the least square error between the numerical and exact solutions of equation (\ref{41}), $E_0^{N}=\frac{1}{N^2}\parallel \mathbf{U}-\mathbf{U}^{0} \parallel_{2}$, for different numbers of grid cells are given in Table \ref{table2}. The error is steadily decreasing as the grid is refined, suggesting convergence of the method.

To illustrate spatio-temporal behaviour, the results of the numerical simulations with $\epsilon=0$ at different times are shown in Figure \ref{fig1}. The support of the solution expands radially, in agreement with the property of the porous medium equation, that initial data with compact support lead to solution with compact support (finite speed of interface propagation). The solution attains its maximum in the center of the domain and decreases toward the interface, where it indicates blow-up of the gradient, as expected from (\ref{41}) for $m>1$. Interface smearing effects that are introduced by the spatial discretisation are rather mild, and the numerical solution is free of spurious oscillations.
The numerical errors occur primarily at the interface, whereas agreement between the exact and the numerical solution is very good in the interior of the region where $U>0$ and the solutions are smooth, cf. Figure \ref{fig3}. But also here we see that the errors at the interface decrease as the grid resolution is increased.

Figure \ref{fig2} shows the time step chosen by the ROW method. Initially it remains constant at 0.003, at approximately $t=0.56$, it drops to 0.0015 where it remains for the remainder of the simulation, up to $t=1$.

These tests show that the spatial discretisation employed here produces results of acceptable accuracy for highly degenerate diffusion reaction equations.

\section{Numerical examples and validation}\label{sec5}

In section \ref{illust:sec} we show an illustrative simulation of biofilm model (\ref{21}) with boundary conditions (\ref{23}) in growth limited and transport limited regimes with the numerical method discussed in section \ref{sec4}. Subsequently, in section \ref{valid:sec} we investigate the behaviour of the numerical solution with respect to grid step size $\Delta x$, regularization parameter $\epsilon$ and the tolerance required of the Rosenbrock-Wanner method.
The model parameters that we use in these simulations are given in Table \ref{tabledata}.

\begin{table}[h!]
\centering
\caption{Dimensionless model parameters of system (\ref{21}), taken from \cite{RE:2014}.} \label{tabledata}
\begin{tabular}{l*{2}{c}r}
  \hline
  Parameter&Symbol&Value\\
  \hline
  Decay rate &~$k$&~$0.67$\\
  Monod half saturation&~$K_U$&~$0.13$\\
  substrate uptake rate&~$\nu_U$&~$530$\\
  Nutrient diffusion coefficient&~$d_c$&~$33$\\
  biomass motility coefficient&~$\delta$&~$10^{-8}$\\
  \hline
\end{tabular}
\end{table}

\begin{figure*}[h!]
\centering
\begin{tabular}{ c c c } 
\includegraphics[width=0.47\textwidth]{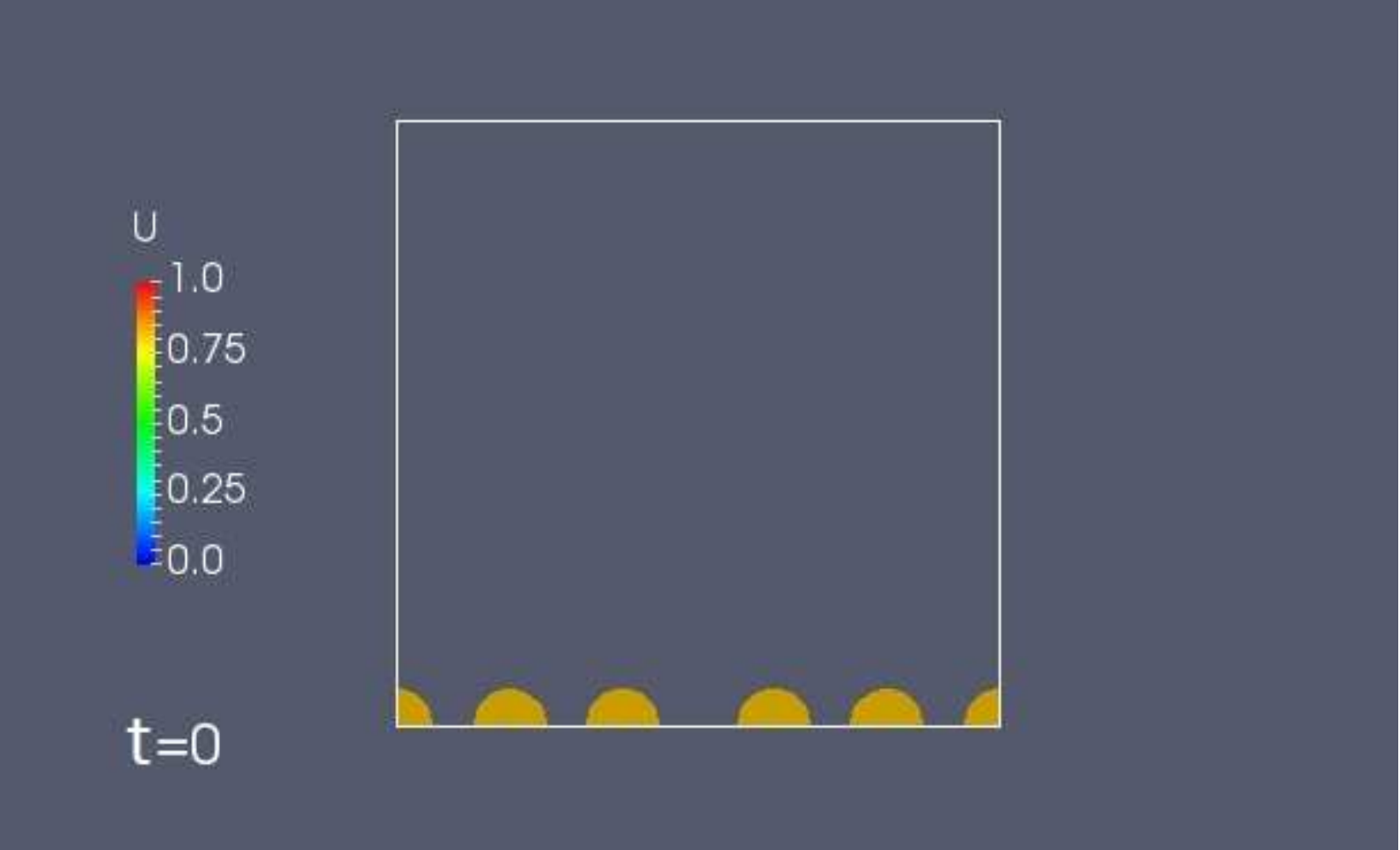}&\includegraphics[width=0.47\textwidth]{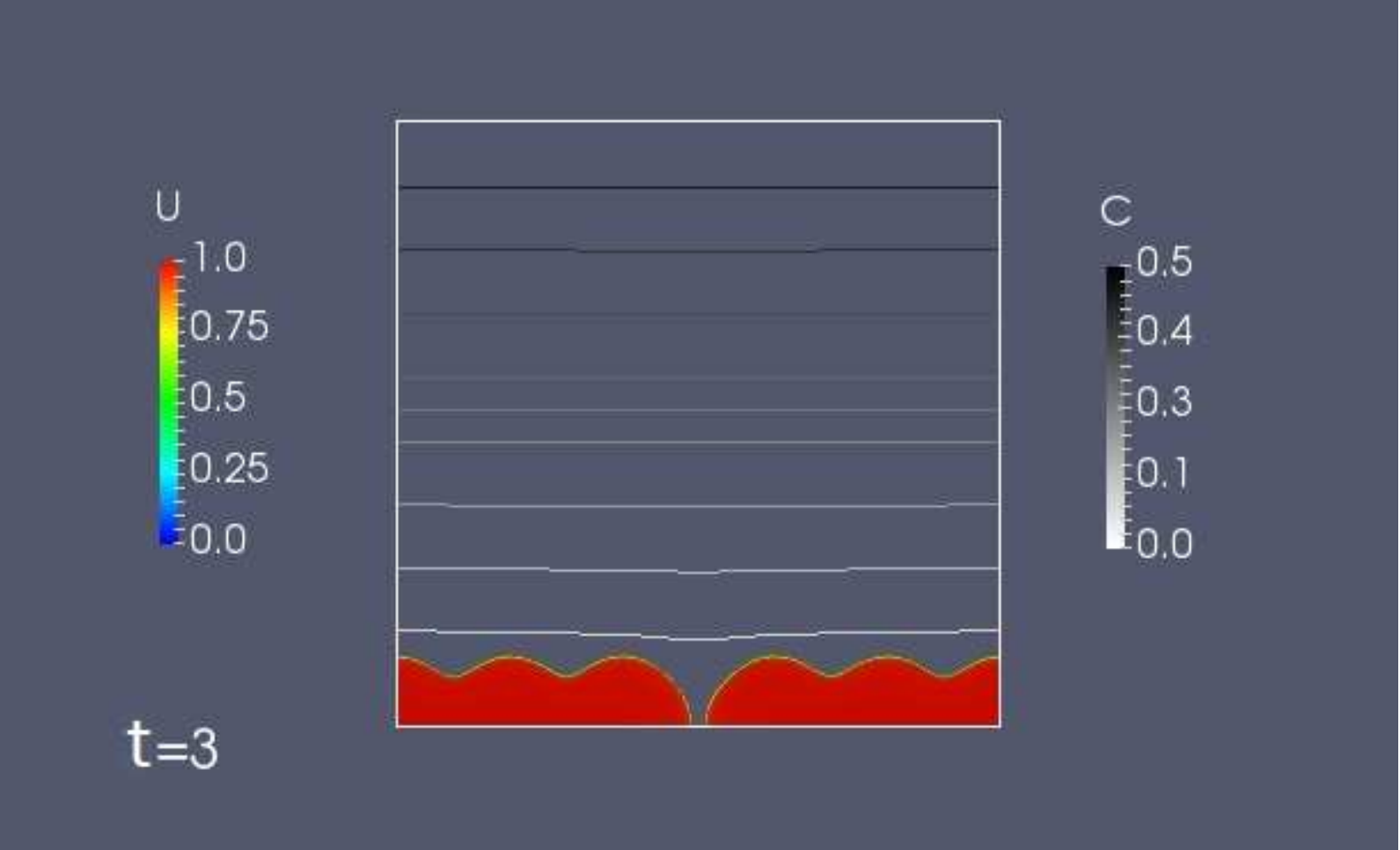}&\\
\includegraphics[width=0.47\textwidth]{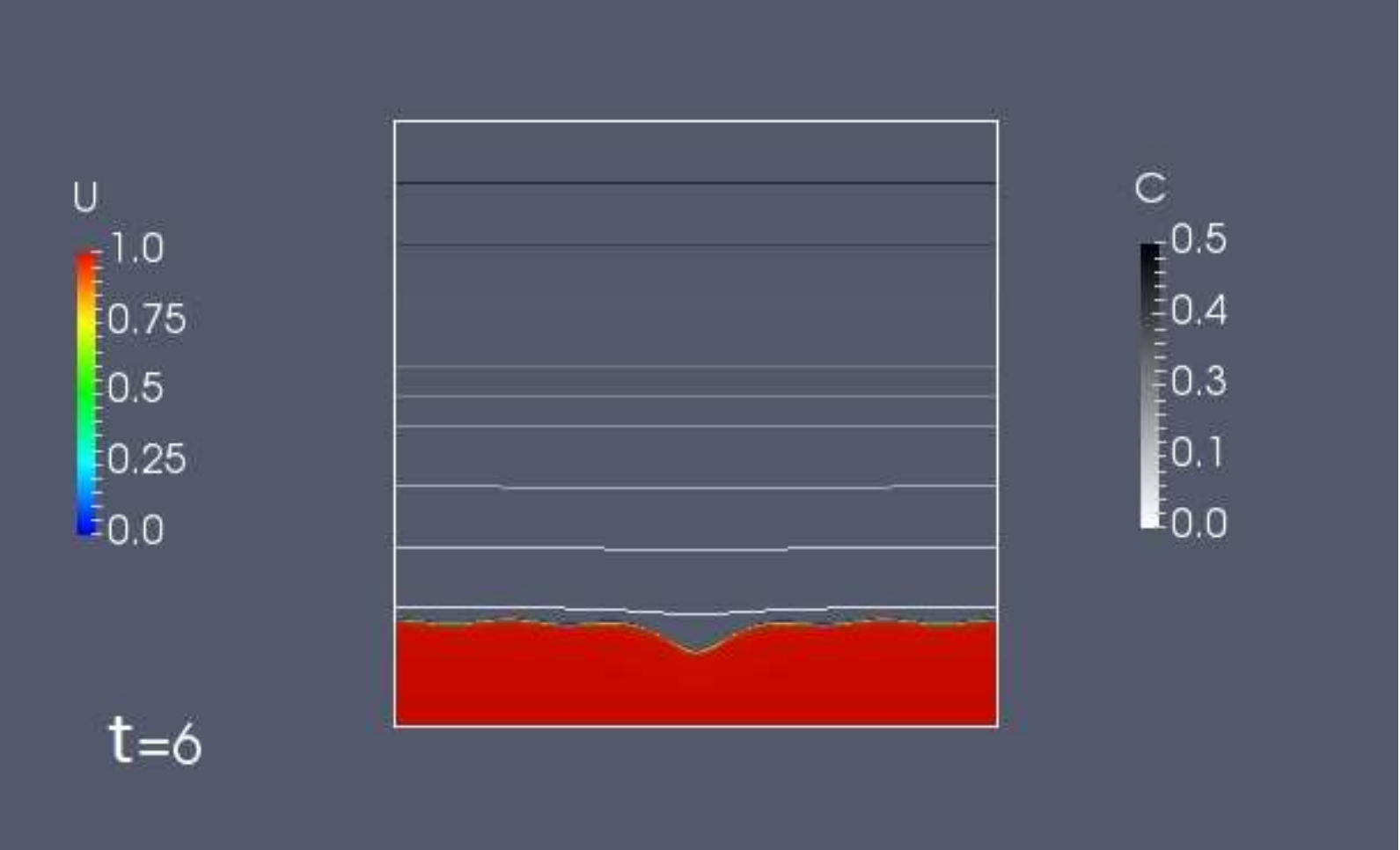}&\includegraphics[width=0.47\textwidth]{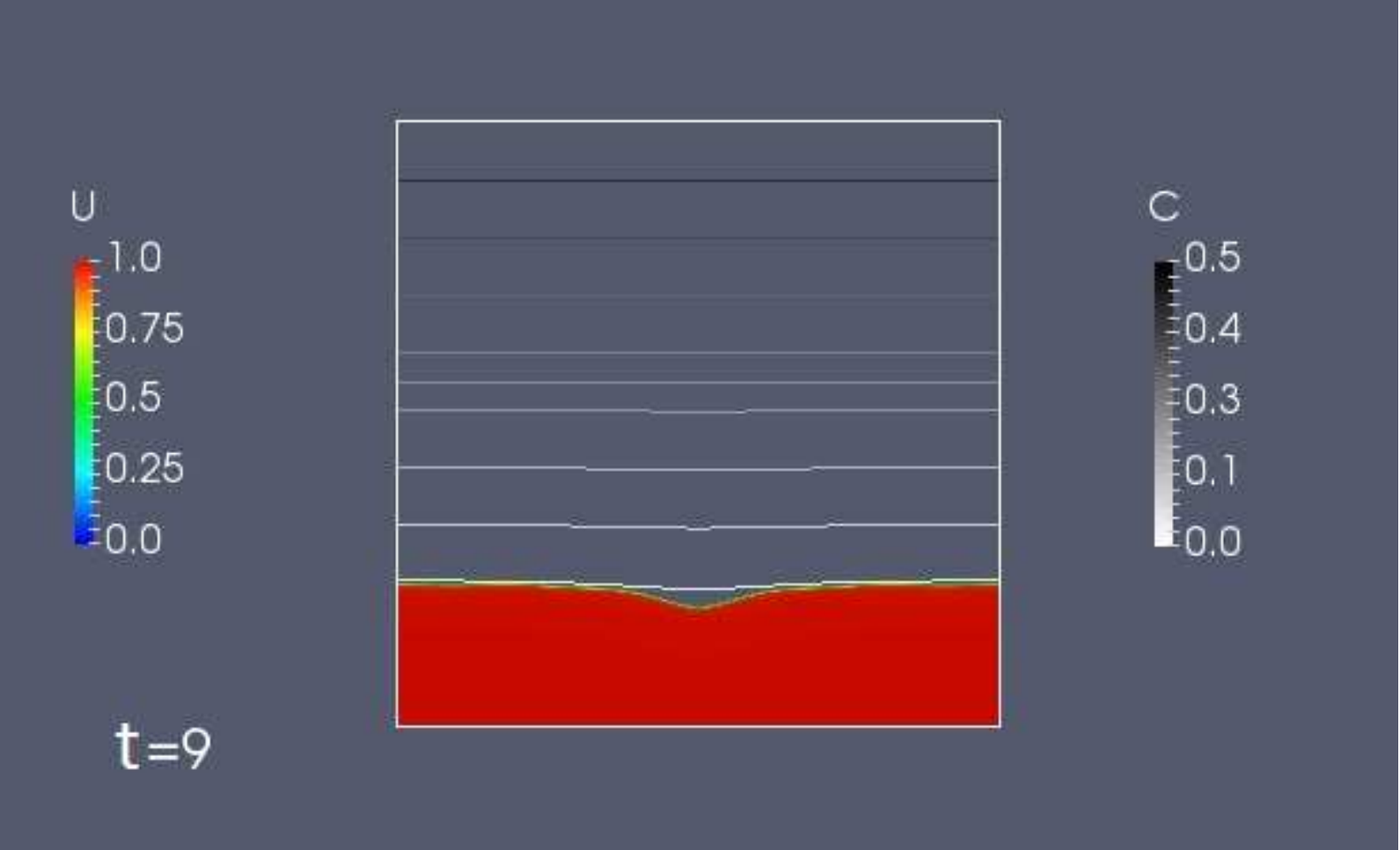}&\\
\end{tabular}
\centering 
\includegraphics[width=0.5\textwidth]{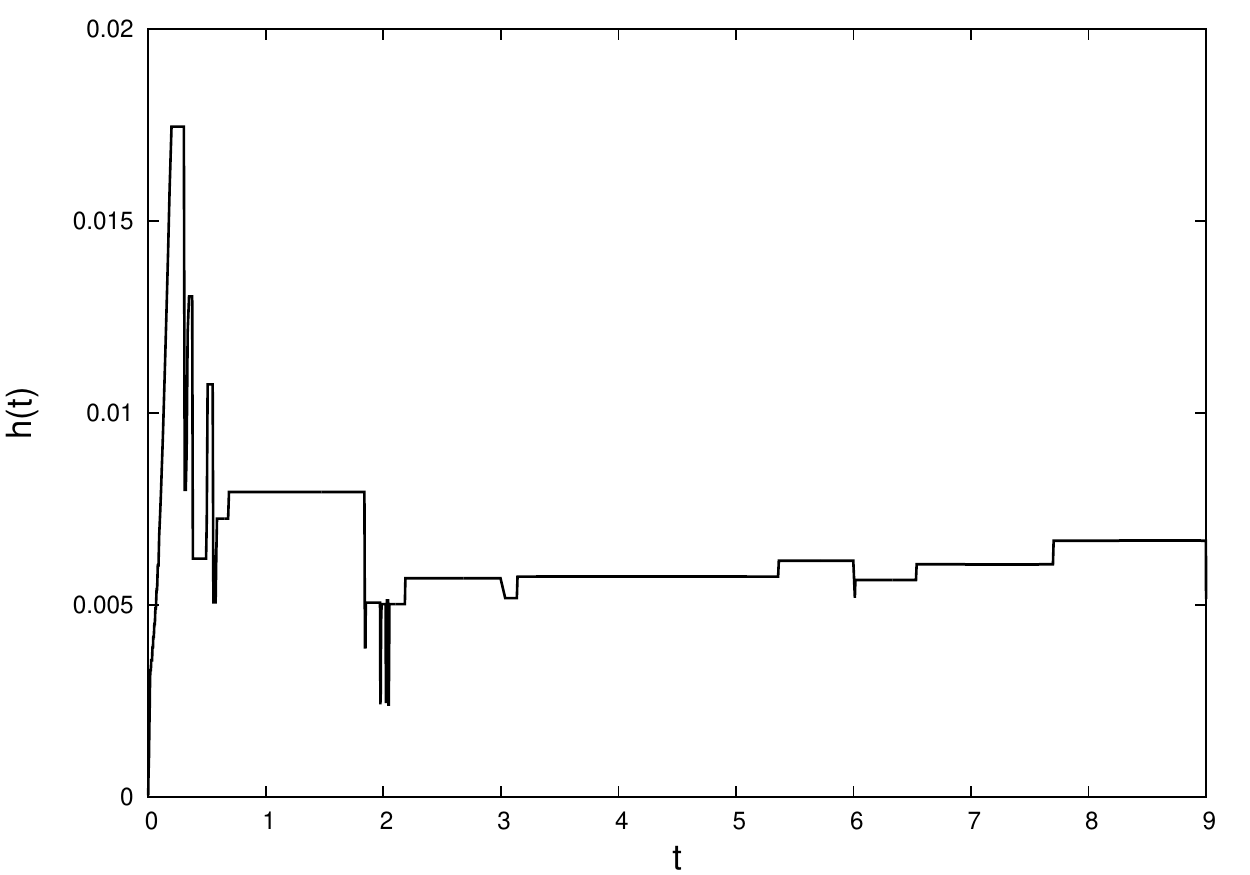}
\caption{\textbf{First two rows:} Snapshot of biofilm formation and nutrient distribution at $t= 0,3,6,9$
on a square domain with $N=M=256$ and $TOL=1e-7$. \textbf{Third row:} The value of time-step size for degenerate problem (\ref{220}) with $N=M=256$ and $TOL=1e-7$ }
\label{fig4}
\end{figure*}

\subsection{Illustrative Simulation}\label{illust:sec}

In a first illustration of the method we consider a case where initially the substratum is inoculated by six semi spherical colonies, which are symmetrically arranged around the center of the substratum on the bottom boundary. The spacing between the three colonies to the left and to the right is equidistant, but the spacing between the two inner most colonies is slightly larger. Substrate is  added from the top.
The domain $[0,1]\times[0,1]$ is square, uniformly discretised by a mesh with $N \times M=256 \times 256$ grid cells. The simulations shown here are carried out with $\epsilon=0$, i.e, the non-regularised, degenerate problem is solved.
For the time integration of the spatially discretised system (\ref{220}), we set the tolerance of the ROW method at $TOL=1e-7$.

Snapshots of the simulation are shown in Figure \ref{fig4}. Biofilm colonies are color-coded by biomass density, the grey-scale isolines are drawn to depict the substrate concentration $C$.

As the graphical results show, the numerical solution is positive and bounded by a value less than one as we have shown mathematically in section \ref{sec3}. Furthermore, the numerical simulation preserves the symmetry of the solution that was imposed by symmetric initial conditions.

Initially the individual colonies grow and expand but are still separated. At $t=3$ the colonies have merged into one larger colony left of center, and one right of center, but with a gap still in the center where the colonies initially were spaced further apart. The colonies continue to grow and stratify. At $t=6$ the gap is closed and the biofilm stratifies. At $t=12$ finally, one almost homogeneous layer of biofilm is formed. Throughout the simulation, the substrate concentration appears stratified, parallel to the substratum. While the substrate concentration decreases toward the substratum due to consumption in the biofilm, it does not become severely  limiting.

In Figure \ref{fig4} we also show how the time step $h(t)$ changes over the progress of the simulation. Initially it increases, indicating that the initial guess for the time step could have been chosen larger. After a short plateau phase it decreases in an oscillatory fashion and attains a plateau phase at time $t=3$ which is occasionally disrupted by intermittent drops in time-step from which the method recovers quickly. This shows not only the adaptability of the numerical method but also indicates that the adaptivity of the method allows a simulation with time steps that usually are much larger than the smallest occurring time step.

It was shown in \cite{PVH:1998} that the biofilm structure depends on the ratio of biomass growth and nutrient supply.  For low nutrient supply, biofilms form patchy structures while for high nutrient availability more compact structures are obtained. We have shown in Figure \ref{fig4} that for the values of parameters which we used for numerical simulations, biofilm colonies expand and form a homogeneous thick layer. Nutrient supply can be controled by the bulk concentration and by the diffusion parameters, namely the diffusion coefficient $d_c$ and the diffusion length $H$. In our non-dimensional formulation, decreasing the bulk concentration is equivalent to increasing the half saturation concentration $K_U$. In order to test our method for biofilms under nutrient limitations, we change $K_U$ from $0.13$ to $0.65$ and decrease $d_c$ from $33$ to $3.3$.

Snapshots of the simulation are shown in Figure \ref{figg}. Biofilm colonies are color-coded by biomass density. It is observed in Figure \ref{figg} that for lower nutrient availability, the biofilm has rough structure and bacterial colonies do not merge together even at $t=350$. They grow upwards, where nutrients are coming from, rather than spreading horizontally. The simulation in Figure \ref{figg} also shows the hollow inside the biofilm which emerges at $t=150$ as consequence of nutrient limitation.

\begin{figure*}[h!]
\centering
\begin{tabular}{ c c c } 
\includegraphics[width=0.45\textwidth]{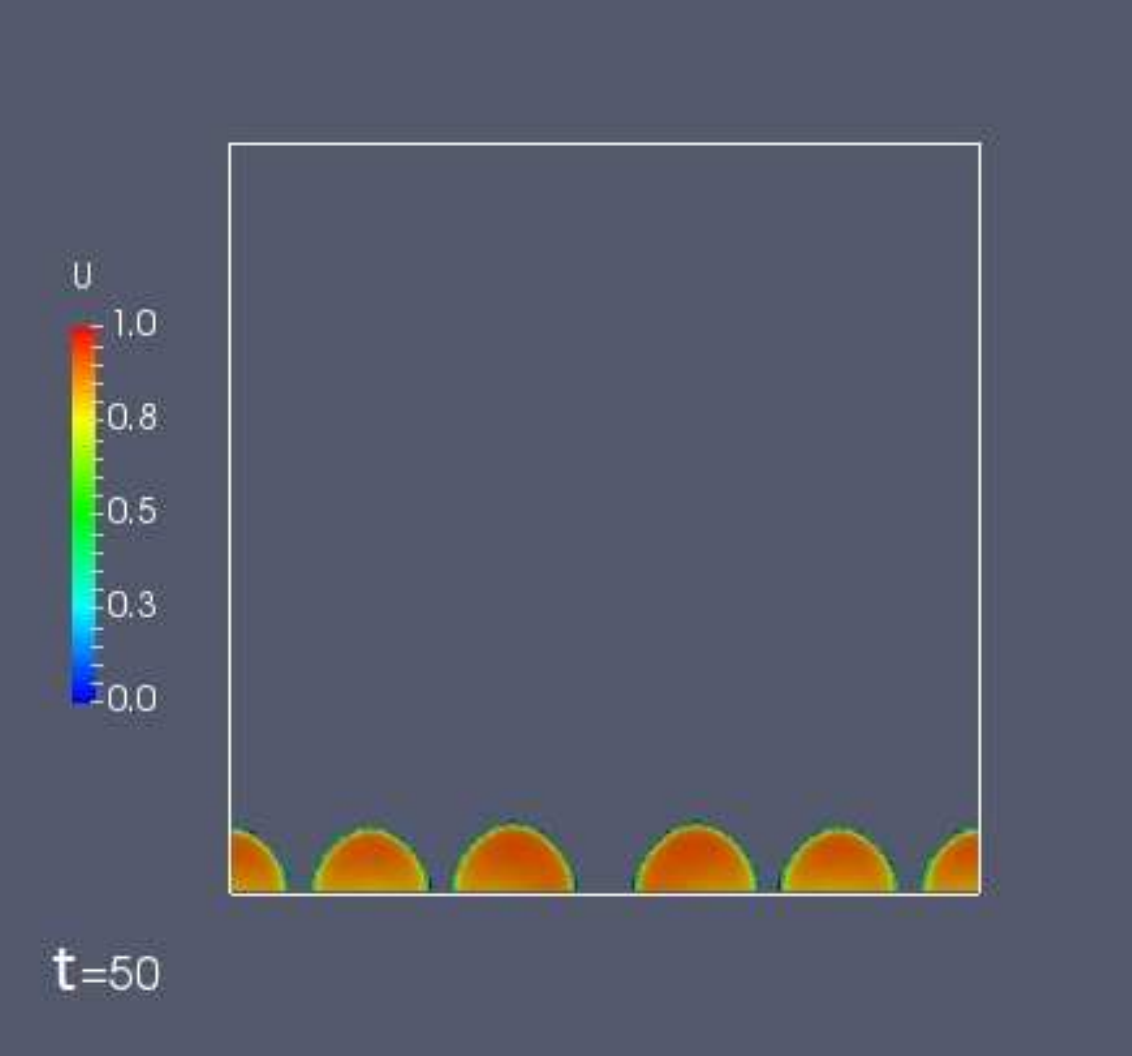}&\includegraphics[width=0.45\textwidth]{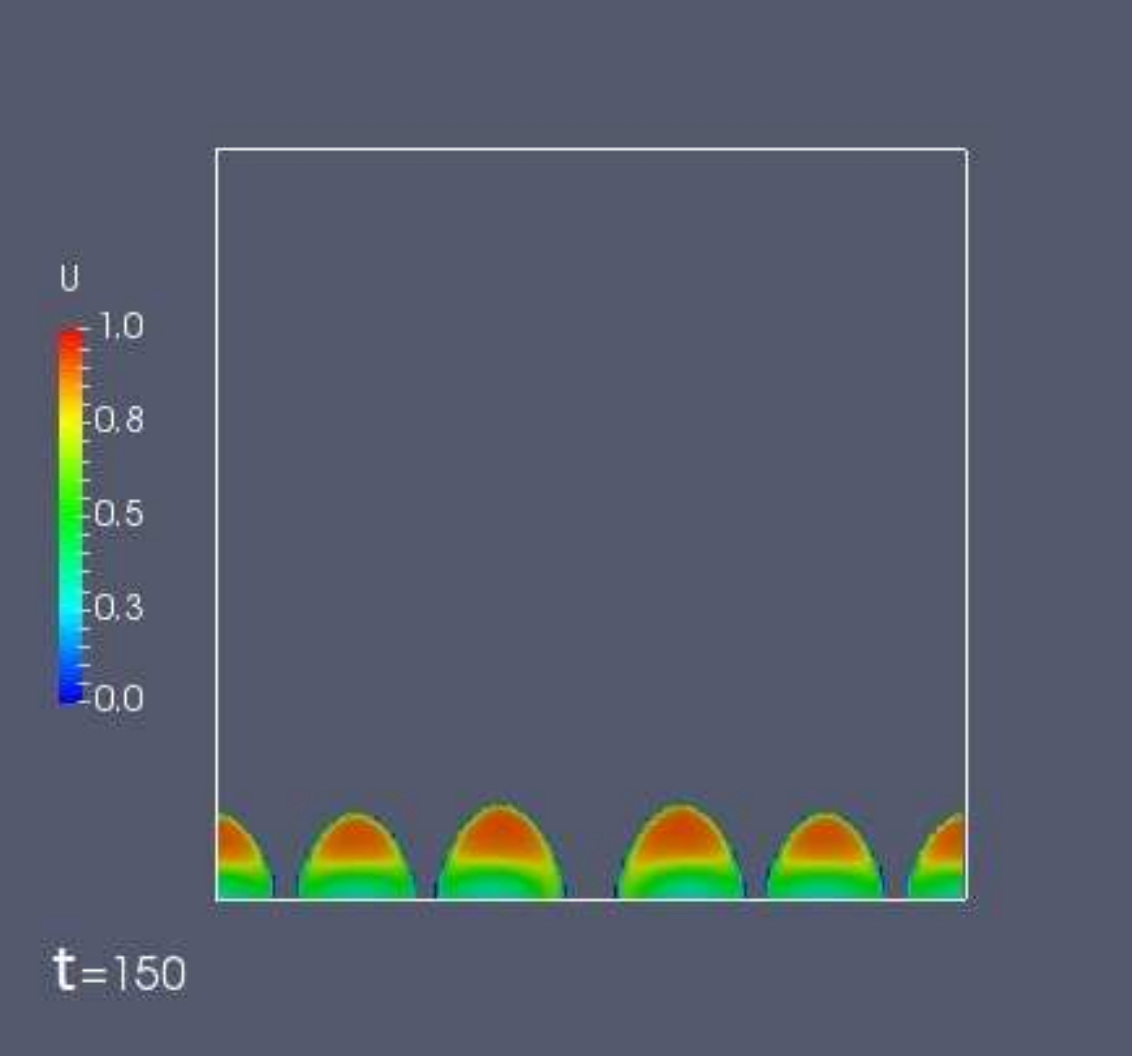}&\\
\includegraphics[width=0.45\textwidth]{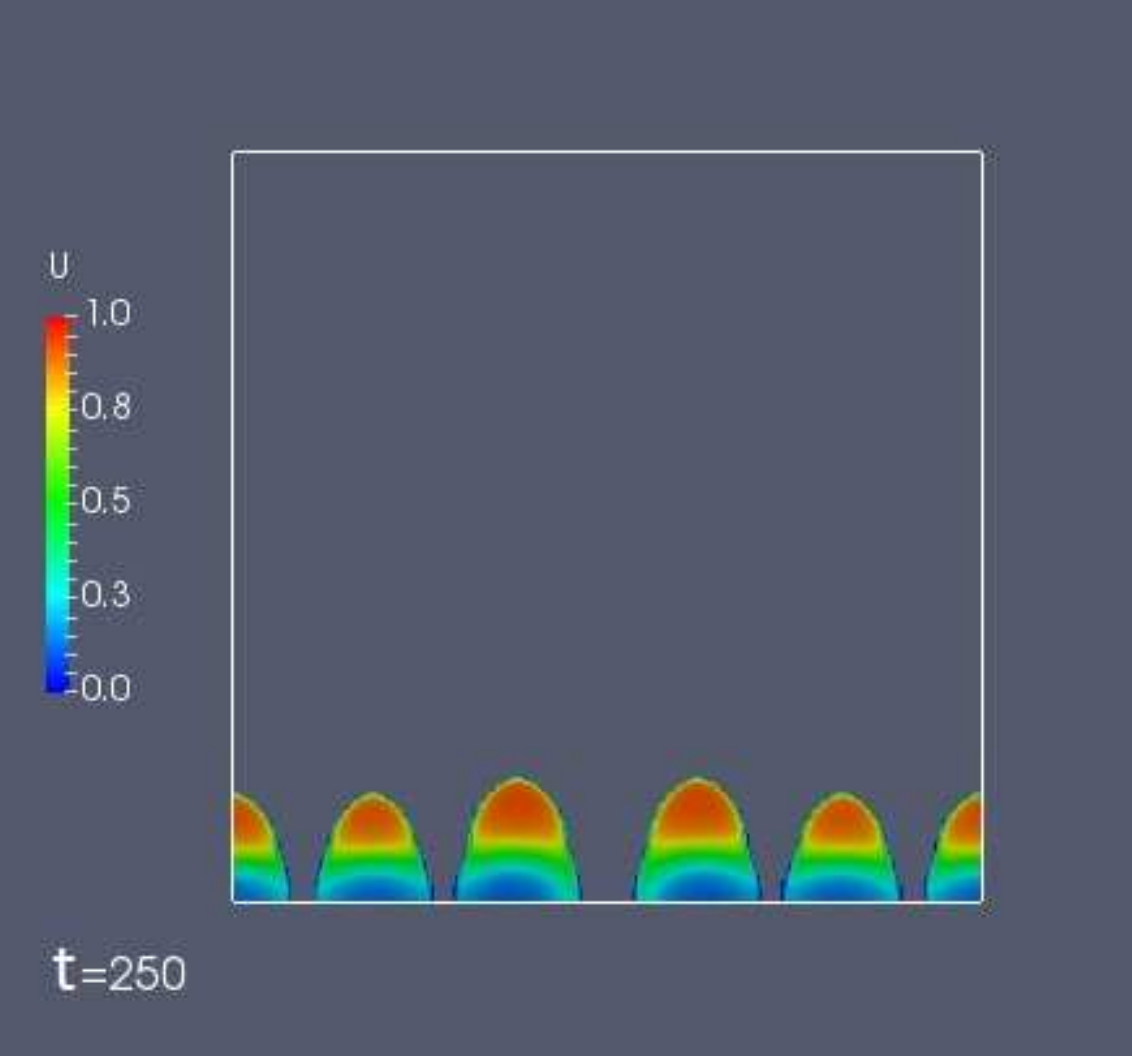}&\includegraphics[width=0.45\textwidth]{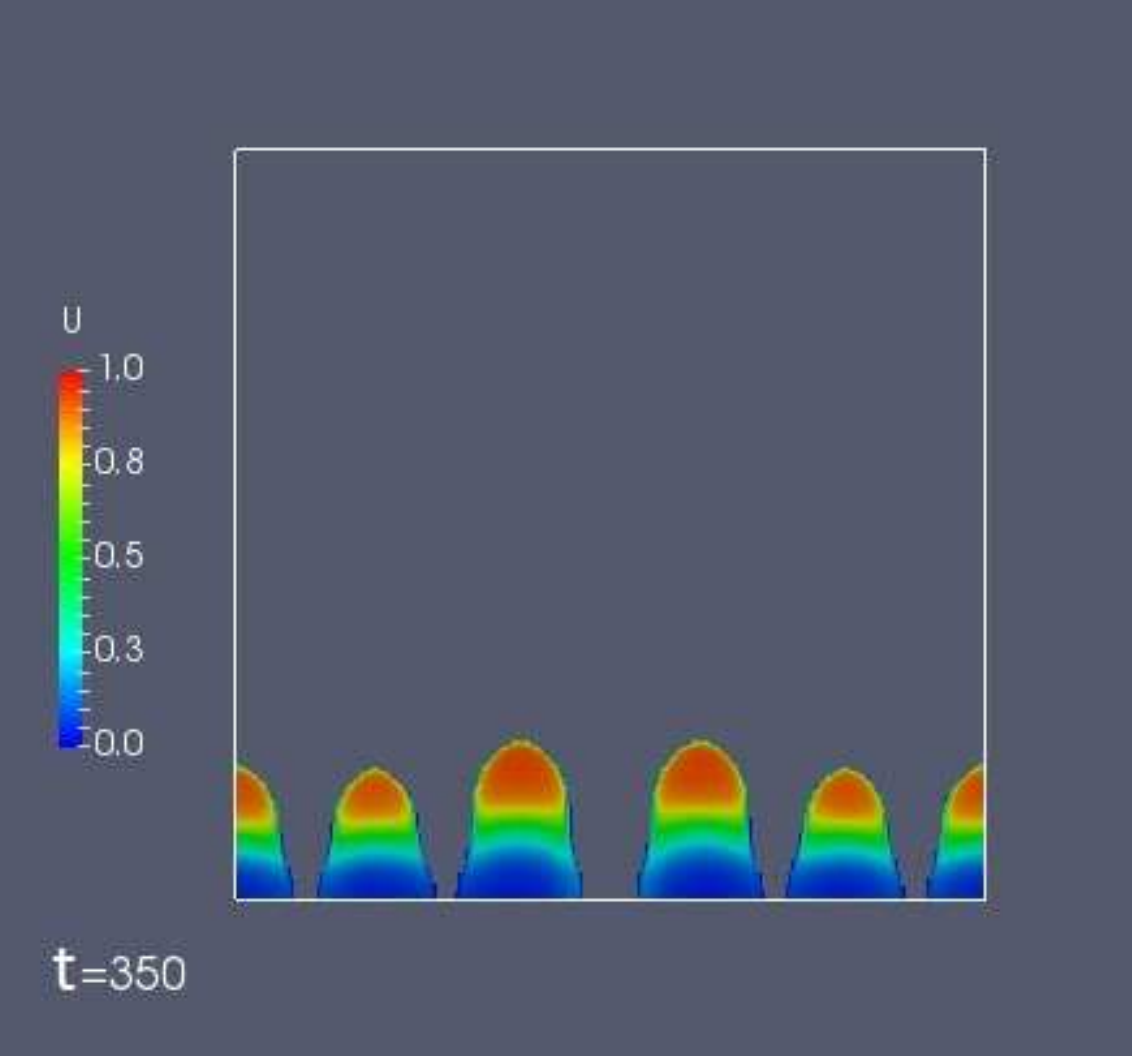}&\\
\end{tabular}
\caption{Snapshot of biofilm formation at $t=50,150,250,350$ with higher nutrient availability on a square domain with $N=M=256$ and $TOL=1e-7$.}
\label{figg}
\end{figure*}

\subsection{Validation of the Numerical Method}\label{valid:sec}

\subsubsection{Convergence of the regularization}\label{eps}

We have seen in Section \ref{sec4} for the porous medium equation that solving the regularised problem and passing $\epsilon$ to 0 gives a solution of the degenerate problem. We investigate this here for the full prototype biofilm growth model (\ref{21}) by solving (\ref{31}), on a uniform grid of size $256 \times 256$.
In order to determine the rate of convergence approximately we consider degenerate and regular semi-discrete problems (\ref{220}) and (\ref{31}) with a similar initial setting as in Section \ref{illust:sec}.

We define the error between solutions of degenerate and regular semi-discrete problems (\ref{220}) and (\ref{31}) as $E^{\epsilon}_{U}=\frac{1}{256\times 256}\parallel \mathbf{U}^0-\mathbf{U}^{\epsilon}\parallel_{2}$ and $E^{\epsilon}_{C}=\frac{1}{256\times 256}\parallel \mathbf{C}^0-\mathbf{C}^{\epsilon}\parallel_{2}$ and compute them at $t=6$. This is the time at which colonies are merged into a larger colony and there is not any gap between them. 

The values of $E^{\epsilon}_{U}$ and $E^{\epsilon}_{C}$ are shown in Figure \ref{fig7} for different values of $\epsilon$. The logarithmic scale is chosen for both axes to show the results more clear. 

As the graphical results in Figure \ref{fig7} show the error between solutions of regular and degenerate semi-discrete problems is decreasing by decreasing the value of $\epsilon$. For $\mathbf{U}$ the error is initially decreasing, at approximately $\epsilon=10^{-5}$ it becomes constant and does not show any considerable variation for $\epsilon<10^{-5}$ indicating the convergence of $\mathbf{U}^{\epsilon}$ to $\mathbf{U}^0$. For $\mathbf{C}$ the error is initially decreasing, at approximately $\epsilon=10^{-7}$ it reaches a plateau and remains constant for $\epsilon<10^{-7}$ confirming the convergence of $\mathbf{C}^{\epsilon}$ to $\mathbf{C}^0$. From the results shown in Figure \ref{fig7} we conclude that $\mathbf{U}^{\epsilon}$ converges faster than $\mathbf{C}^{\epsilon}$. 
 
 \begin{figure*}[h!]
\centering
\begin{tabular}{ c c }
\includegraphics[scale=0.45]{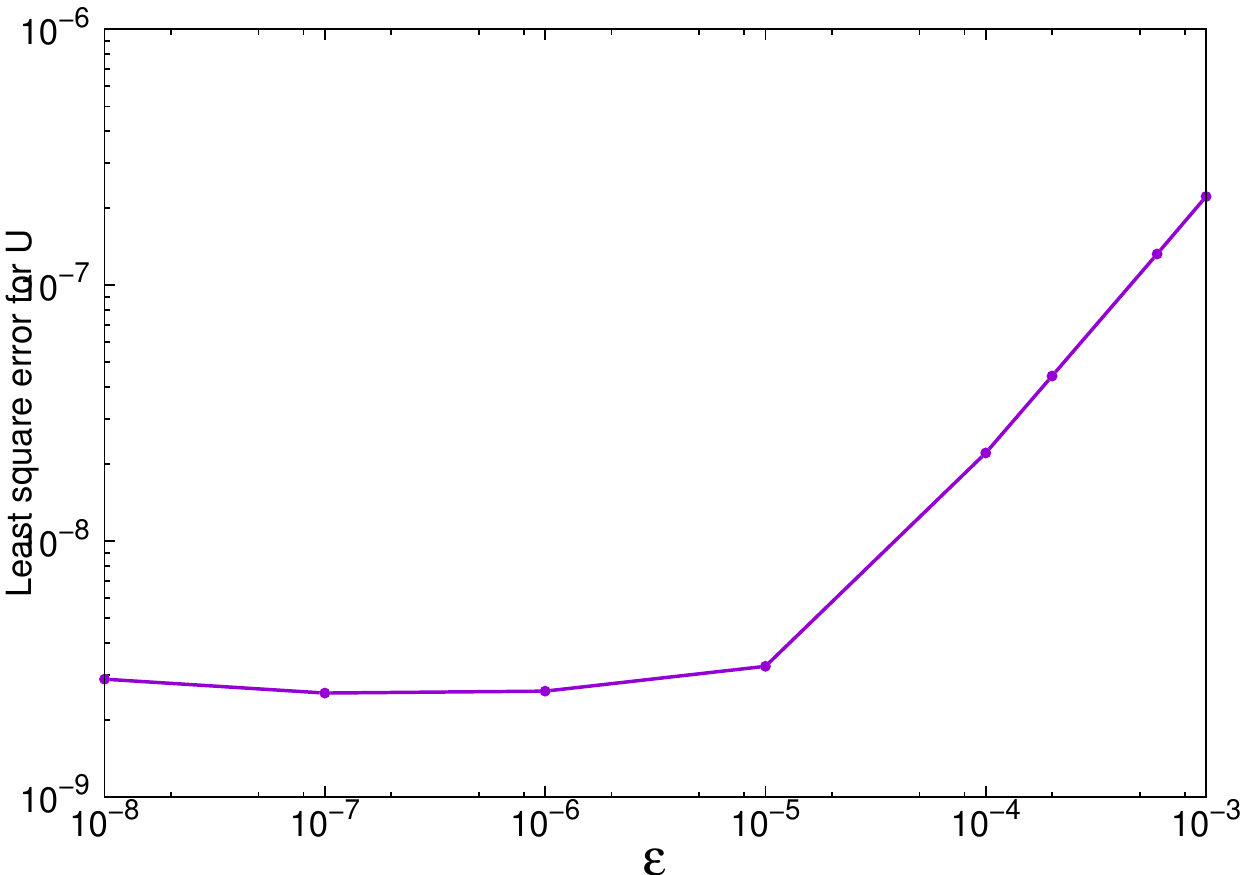} & \includegraphics[scale=0.45]{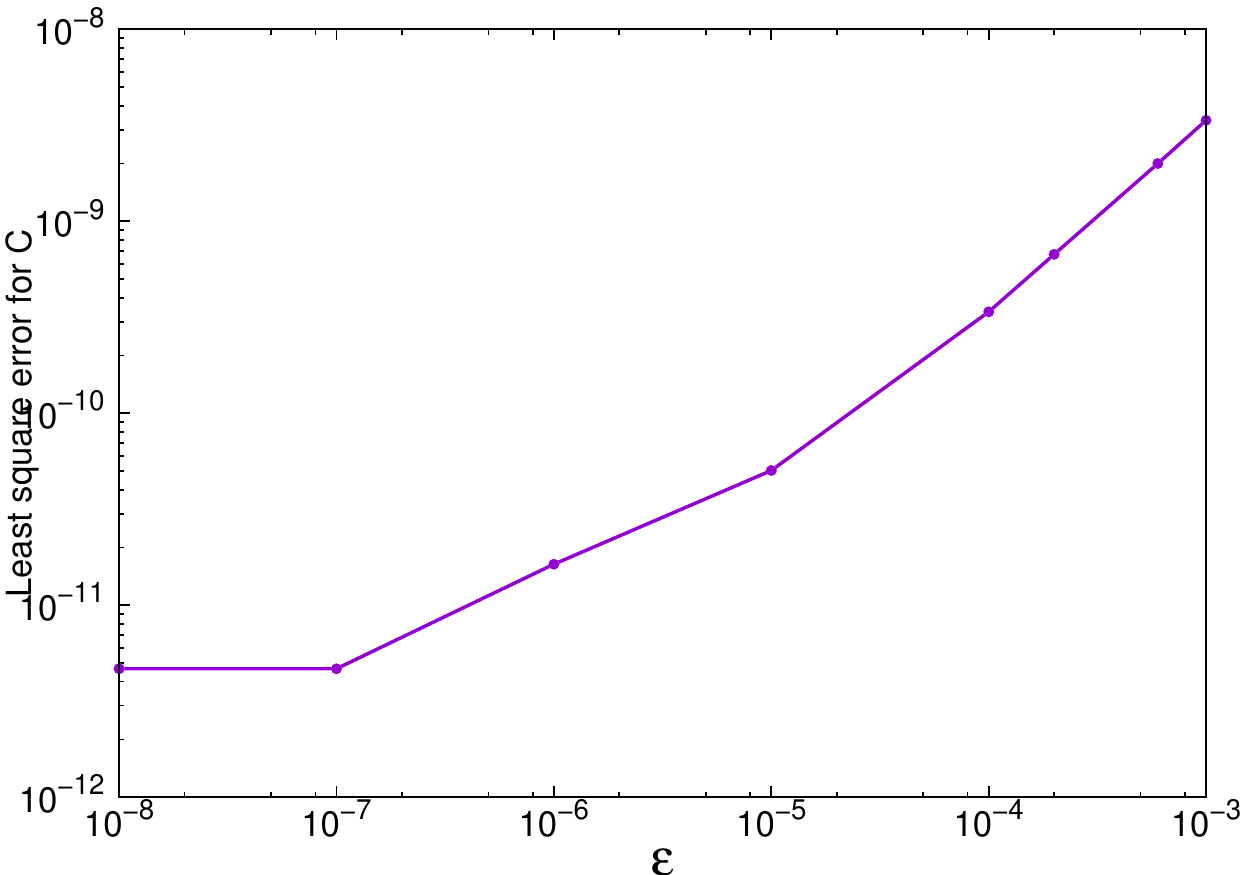} 
\end{tabular}
\caption{Least square errors $E^{\epsilon}_{U}=\frac{1}{256\times 256}\parallel \mathbf{U}^0-\mathbf{U}^{\epsilon}\parallel_{2}$ and $E^{\epsilon}_{C}=\frac{1}{256\times 256}\parallel \mathbf{C}^0-\mathbf{C}^{\epsilon}\parallel_{2}$ at $t=6$ between the solutions of problems (\ref{220}) and (\ref{31}) with $TOL=1e-7$.}
\label{fig7}
\end{figure*}

\subsubsection{Grid refinement}\label{grid}
For a grid refinement study of the full degenerate biofilm model,  we consider the degenerate semi-discrete problem (\ref{220}) with the same initial setting as Section \ref{illust:sec} to account for spatial accuracy explicitly. We refine the grids with $2^{\kappa}$, where $\kappa$ is an integer and compute the least square norm of the difference between two subsequent solutions. The least square errors are defined as $E^{\kappa}_{U}=\frac{\parallel \mathbf{U}_{\kappa}-\mathbf{U}_{\kappa-1} \parallel_{2}}{2^{2(\kappa-1)}}$ and $E^{\kappa}_{C}=\frac{\parallel \mathbf{C}_{\kappa}-\mathbf{C}_{\kappa-1} \parallel_{2}}{2^{2(\kappa-1)}}$.

We compute the errors at two different times $t=2$ and $t=6$. At $t=2$ individual colonies have not merged into one larger colony yet and at $t=6$ the gap between colonies is closed and the biofilm stratifies. We also compute the Elapsed CPU Time (ECPUT) for each cell resolution. 

Tables \ref{table3a}-\ref{table3b} show the least square error between two subsequent solutions at times $t=2$ and $t=6$ and also the elapsed CPU time. We observe a steady decrease in errors $E^{\kappa}_{U}$ and $E^{\kappa}_{C}$ at both times as the grid is refined, indicating the convergence of the method and its reliability to give tolerable smearing around the interface.

Since we have the density value at the center grids, to compute these data interpolation must be used which is a source of error so these data cannot be used to determine the convergence rate accurately.

\begin{table}
\centering
\caption{The error between two consecutive solutions for
grids with $2^{\kappa}\times2^{\kappa}$ and $2^{\kappa-1}\times2^{\kappa-1}$ cell resolution at $t=2$ (before merging) and Elapsed CPU Time (ECPUT).}\label{table3a}
\begin{tabular}{llllllll}
 \hline
  $\kappa$ &~$E^{\kappa}_{U}\vert_{\small{t=2}}$&~$E^{\kappa}_{C}\vert_{\small{t=2}}$&~$ ECPUT\vert_{\small{t=2}}$\\
 \hline
  5&~$0.2818501\times10^{-2}$&~$0.1102250\times10^{-2}$&~$0.2403\times 10^2$\\
  6&~$0.5379241\times10^{-3}$&~$0.1528744\times10^{-3}$&~$0.1524\times 10^3$\\
  7&~$0.1908778\times10^{-3}$&~$0.1745382\times10^{-4}$&~$0.2111\times 10^4$\\
  8&~$0.1352093\times10^{-3}$&~$0.1209376\times10^{-4}$&~$0.3424\times 10^5$\\
\hline
  \end{tabular}

  \end{table} 

\begin{table}
\centering
\caption{The error between two consecutive solutions for
grids with $2^{\kappa}\times2^{\kappa}$ and $2^{\kappa-1}\times2^{\kappa-1}$ cell resolution at $t=6$ (after merging) and Elapsed CPU Time (ECPUT).}\label{table3b}
\begin{tabular}{llllllll}
 \hline
  $\kappa$ &~$E^{\kappa}_{U}\vert_{\small{t=6}}$&~$E^{\kappa}_{C}\vert_{\small{t=6}}$&~$ECPUT\vert_{\small{t=6}}$\\
 \hline
  5&~$0.2238254\times10^{-2}$&~$0.2687217\times10^{-3}$&~$0.5431\times 10^2$\\
  6&~$0.5709526\times10^{-3}$&~$0.2166271\times10^{-4}$&~$0.4148\times 10^4$\\
  7&~$0.1972021\times10^{-3}$&~$0.2606688\times10^{-5}$&~$0.6121\times 10^4$\\
  8&~$0.1095095\times10^{-3}$&~$0.1136270\times10^{-5}$&~$0.1161\times 10^6$\\
\hline
  \end{tabular}
  \end{table} 
  
\subsubsection{Influence of grid refinement on interface location}\label{interface}

In order to study in more detail the movement of the interface and its location for different choices of $\Delta x$  we change the initial setting. One colony is placed at the middle of the substratum. 
The domain $[0,1]\times[0,1]$ is square and the tolerance of the ROW method is set at $TOL=1e-7$.

To show the location of biofilm/liquid interface we pick the interface point located on the symmetry line of the initial colony,  i.e. the line parallel to the $y$-axis with $x=0.5$. The location of the interface between biofilm and liquid in the $x$-$t$ plane is plotted in Figure \ref{fig6} for various $\Delta x$. Its
movement is described by a piecewise constant increasing function, which represents the  discrete nature of the grid and that the interface location can be determined at most with a $\Delta x$ accuracy. As the grid is refined, the discrete jumps diminish and convergence of the interface location is observed as $\Delta x \rightarrow 0$.

\begin{figure}[h!]
\centering
\includegraphics[width=0.55\textwidth]{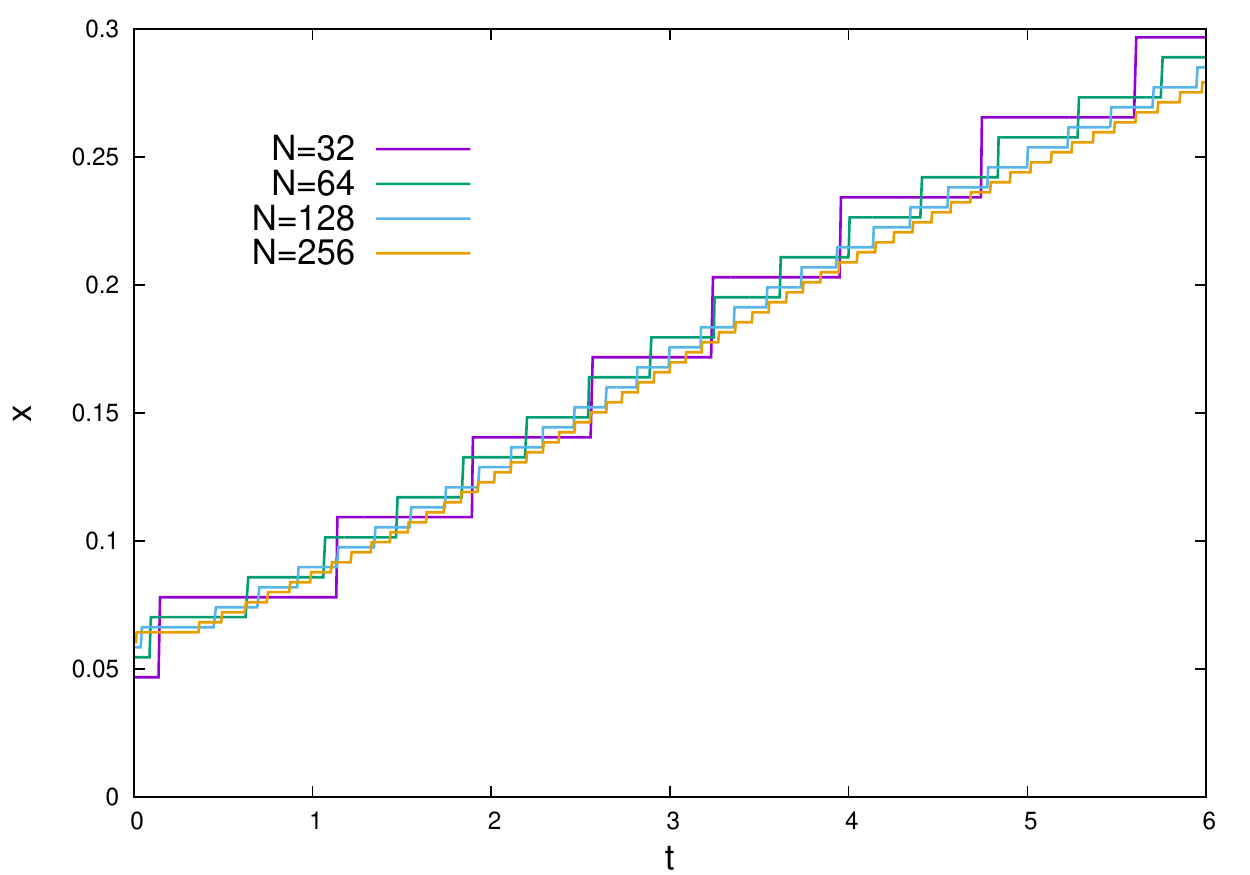}
\caption{Location of the top interface between biofilm and liquid phase for different choices of $\Delta x.$}
\label{fig6}
\end{figure}

\begin{figure*}[h!]
\centering
\begin{tabular}{ c  }
\includegraphics[width=0.55\textwidth]{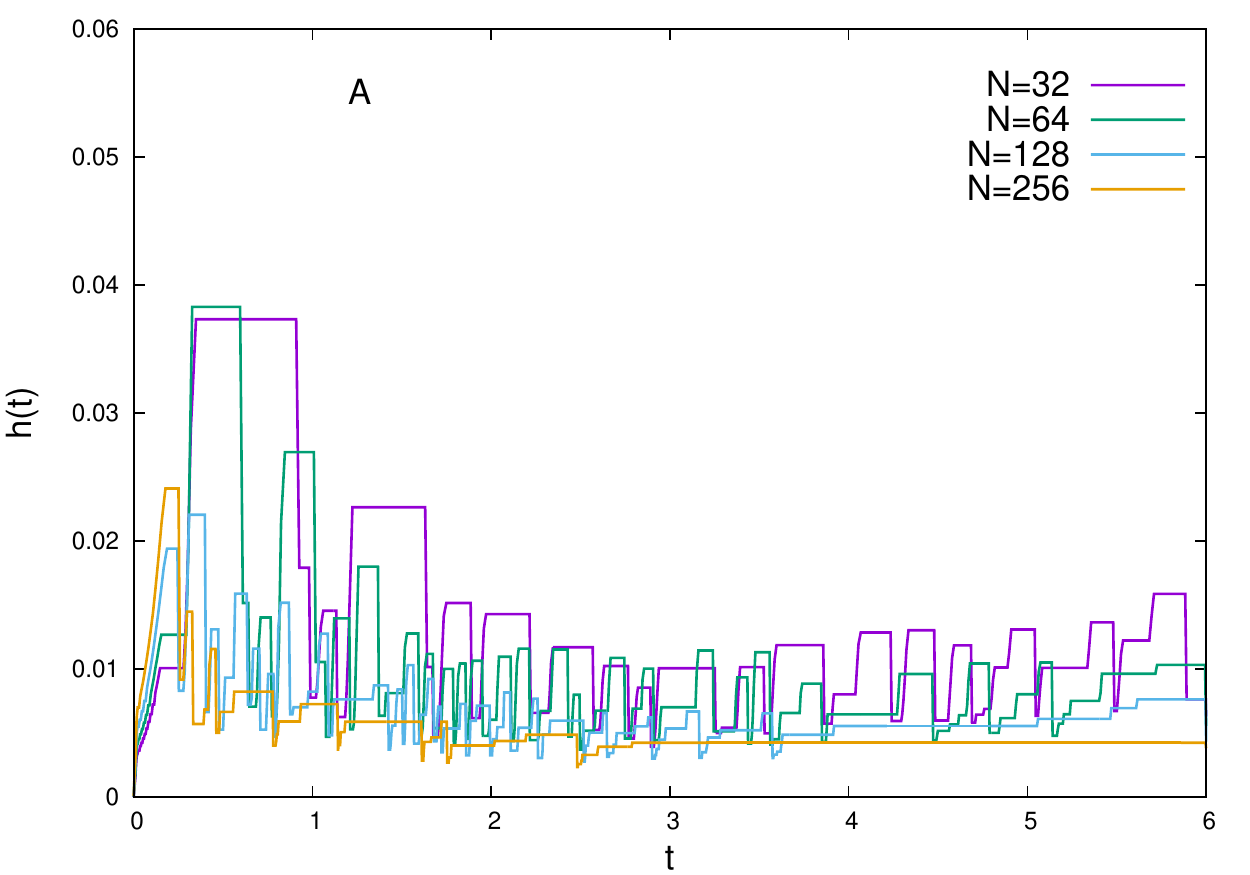}\\
\includegraphics[width=0.55\textwidth]{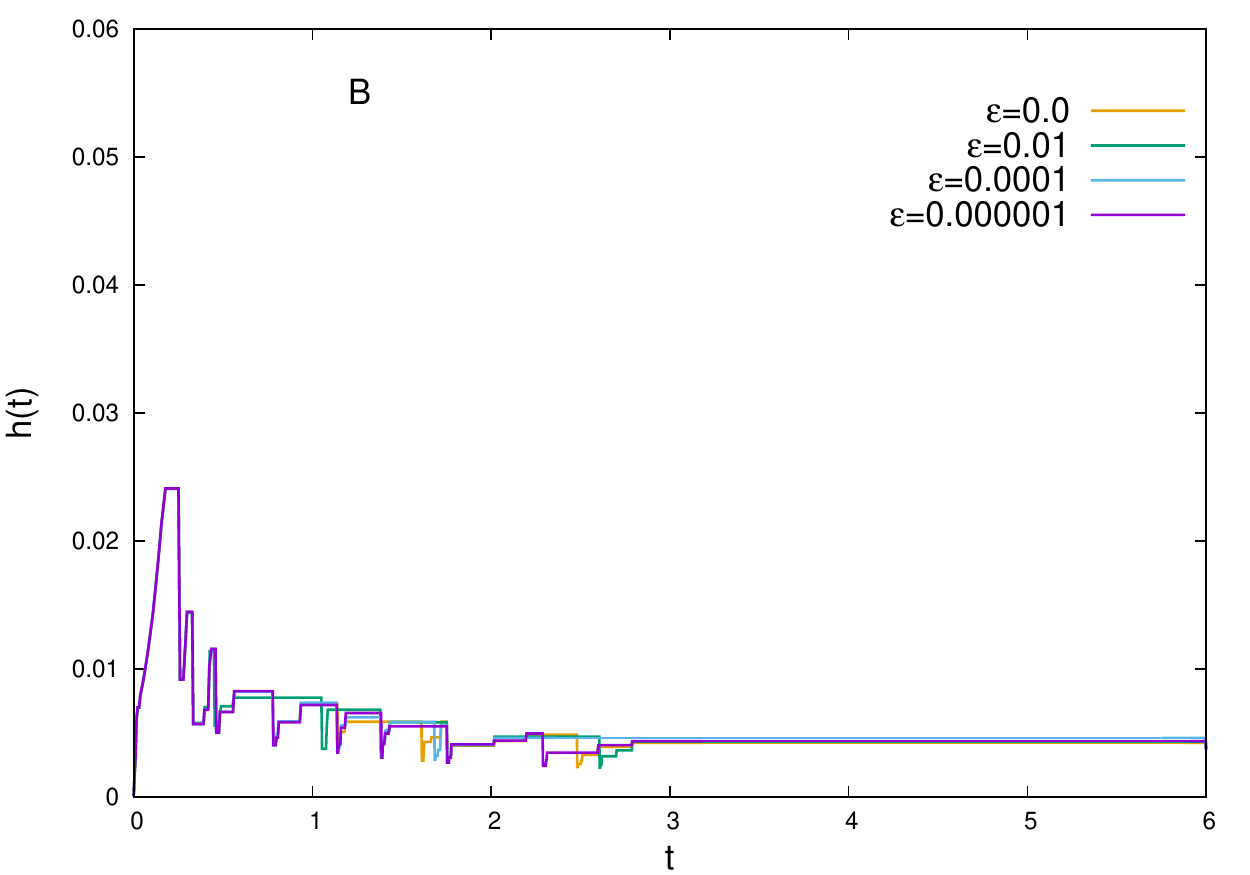}\\
\includegraphics[width=0.55\textwidth]{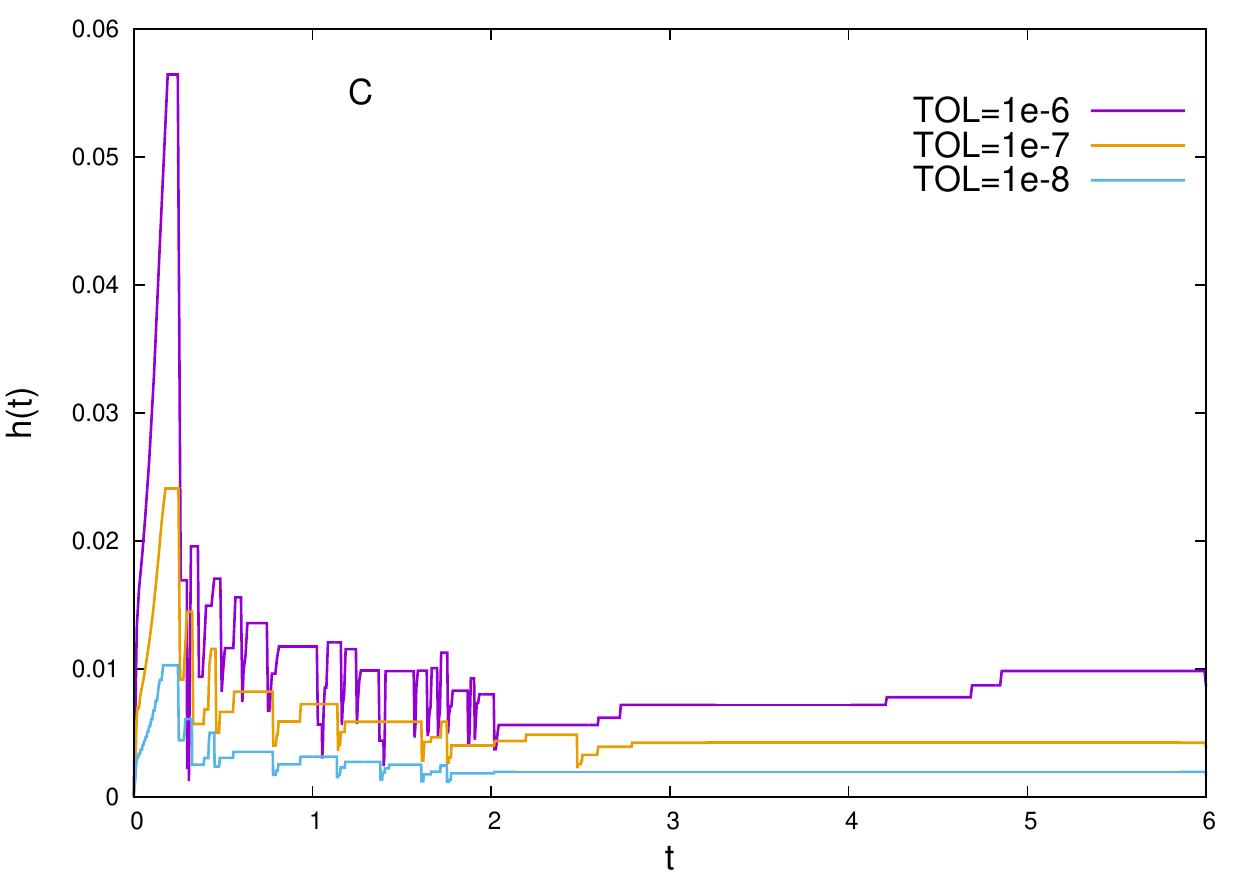}
\end{tabular}
\caption{\textbf{A}. The effect of cell resolution on the value of time-step size $h(t)$ for degenerate problem (\ref{220}) with $TOL=1e-7$, \textbf{B}. The effect of $\epsilon$ on the value of time-step size for regular and degenerate problem (\ref{31}), (\ref{220}) with $N=M=256$ and $TOL=1e-7$, \textbf{C}. The effect of tolerance on time-step size for degenerate problem (\ref{220}) with $N=M=256$.}
\label{fig5}
\end{figure*}

\subsubsection{Effect of method parameters on the time step}

We have shown in Sections \ref{grid} that the accuracy of spatial discretisation increases if grids are refined. We have also shown in section \ref{eps}  how fast the solution of the regularised semi-discrete problem is convergent to the solution of the degenerate semi-discrete problem. 
A third parameter, besides $\Delta x$ and $\epsilon$, that determines the accuracy of ROW method and controls the time-step size $h(t)$ is the  tolerance of the ROW method. Here we study the effect of these three parameters on the time-step size. We consider the same initial setting as in Section \ref{interface}. 

The effect of cell resolution on the value of time-step $h(t)$ for the degenerate semi-discrete problem (\ref{220}) is shown in Figure \ref{fig5}.A. For each cell resolution, initially the time-step size increases, indicating that initial guess could have been chosen larger. After a short plateau phase, $h(t)$ decreases in an oscillatory fashion. The time-step oscillates during the simulation for all spatial-step size except for $\Delta x=\frac{1}{256}$ in which the oscillation stops at approximately $t=2.96$ and $h(t)$ remains constant for the remainder of the simulation up to $t=6$. Figure \ref{fig5}.A also shows that finer grids require finer time steps.

The size of the time-step for the regular semi-discrete problem (\ref{31}) with different values of $\epsilon$ including $\epsilon=0$ is plotted in Figure \ref{fig5}.B, for $M\times N=256\times256$ and $TOL=1e-7$.
The time step $h(t)$ is not strongly sensitive with respect to the value of $\epsilon$. 

The time-step size $h(t)$ for the degenerate problem (\ref{220}) for different values of tolerance of the ROW method is shown in figure \ref{fig5}.C. The time step is sensitive with respect to this parameter, and decreasing the tolerance, i.e. increasing the desired accuracy, in the ROW method decreases the value of time-step size. For all three values of tolerance, initially $h(t)$ increases but the deviation from the initial guess  is considerable for $TOL=1e-6$. After the first short plateau phase, $h(t)$ decreases in oscillatory fashion for all case studies. For $TOL=1e-6$ this oscillation can be seen during the simulation up to $t=6$. For $TOL=1e-7$ oscillatory behaviour stops at approximately $t=2.96$ while this time for $TOL=1e-8$ is approximately $t=1.96$. We conclude that for smaller values of tolerance, $h(t)$ shows less oscillation and becomes stable faster.

\subsection{Discussion of the Numerical Method {\it vis-a-vis} the Semi-Implicit Method with Fixed Time Steps}

The spatial discretisation described in Section \ref{sec2.2} is the same that underlies the semi-implict method of \cite{ED:2007,SH:2012} and related schemes, which  have been predominantly used in applications. 
The resulting semi-discrete equation is a large, highly nonlinear and singular ordinary differential equation system. The difference between the methods lies in the approach to solving this initial value problem.
 We showed in Section \ref{sec3}, via a regularisation analysis, that the solutions of this system indeed possess sufficient smoothness to employ higher order methods, such as embedded Rosenbrock-Wanner methods (which we used; other choices would have been possible), despite the low regularity of the biologically relevant solutions of the underlying partial differential equation, which display gradient blow-up along an interface moving with finite speed.
Compared to the semi-implicit method, which requires in each time step the solution of 2 sparse linear systems of size $(NM)^2$, the method that we used here is more expensive per time step, as it it requires the solution of 4 sparse linear systems of size $4(NM)^2$, and the evaluation of the Jacobian.  On the other hand, it has better  convergence properties, allowing larger time steps to achieve the same accuracy.  In applications, the semi-implicit method and is relatives that use fix time-steps  require sometimes extensive {\it a priori} explorations by the user to find for a particular simulation experiment a suitable time step that gives an acceptable effort/accuracy trade-off. In the method presented here, on the other hand, accuracy is guaranteed by an automatic, adaptive time-step choice.

\bigskip

\section{Quorum sensing induction in isolated colonies}\label{DS}

\subsection{Mathematical model}\label{model} 
 
The mathematical model that we propose to study quorum sensing induction in the growing biofilm is an extension of (\ref{21}), that accounts for the autoinducer, or signal molecule concentration $s$.  Following  \cite{EHK:2015} for autoinducer production, the modified model reads
\begin{equation}\left\{%
\begin{array}{lll}
\frac{\partial u}{\partial t}&=&\nabla(D(u)\nabla u)+\frac{c}{K_{U}+c}u-ku\\\\
\frac{\partial c}{\partial t}&=&d_c\Delta c-\frac{\nu_U c}{K_{U}+c}u\\\\
\frac{\partial s}{\partial t}&=&d_s\Delta s+(\alpha+\beta\frac{s^m}{1+s^m})u-\psi s, \label{531}
\end{array}%
\right.
\end{equation}
where $D(u)$ is defined in (\ref{22}) and $d_c$ and $d_s$ are the constant diffusion coefficients for nutrient and signal molecules respectively. In equation (\ref{531}) autoinducers are produced at a base rate $\alpha$ if the local autoinducer concentrations is small relative to induction threshold, which is scaled here to be 1. If the autoinducer concentration exceeds the induction threshold, the production rate will increase to $\alpha+\beta$. The transition is described by a Hill function with exponent $m$.
Quorum sensing parameters $\alpha,~\beta,~\psi$ and $m$ are chosen in the range of data in \cite{FKH:2011}.

For our simulations we consider a rectangular computational domain $[0,L]\times[0,H]$. The boundary condition for biomass $u$ is the homogeneous Neumann condition everywhere. Note that, as long as the biofilm interface does not reach the domain, the biomass density satisfies both the homogeneous Dirichlet and Neumann conditions.  For the concentrations $c$ and $s$ we pose the homogeneous Neumann condition at the lateral sides of the domain and at the substratum, and Robin condition at the top, where nutrients are added and autoinducer signals diffuse out of the domain.

Thus, the specific boundary condition for problem (\ref{531}) is defined as:
\begin{equation} \label{532}
\left\{%
\begin{array}{lll}
 \partial_nu=0 &\mbox{for} & x\in \partial\Omega\\
 \partial_nc=\partial_ns=0 & \mbox{for} & x_1=0, x_1=L, x_2=0\\
 c+\lambda\partial_n c=1 &\mbox{for} & x_2=H.\\
 s+\lambda\partial_n s=0 &\mbox{for} & x_2=H.
\end{array}%
\right.
\end{equation}
where $\lambda$ is the concentration boundary layer thickness.

\begin{table}
\centering
\caption{Dimensionless model parameters of system (\ref{531}).} \label{table531}
\begin{tabular}{l*{2}{c}r}
  \hline
  Parameter&Symbol&Value&Source\\
  \hline
  Biomass decay rate &~$k$&~$0.67$&~$RE$\\
  Monod half saturation&~$K_U$&~$0.13$&~$RE$\\
  substrate uptake rate&~$\nu_U$&~$530$&~$RE$\\
  Nutrient diffusion coefficient&~$d_c$&~$33$&~$RE$\\
  biomass motility coefficient&~$\delta$&~$10^{-8}$&~$RE$\\
  AHL diffusion coefficient&~$d_s$&~$16.5$&~$-$\\
  dimerization~exponent&~$m$&~$2.5$&~$-$\\
  AHL production rate&~$\alpha$&~$4500$&~$-$\\
  AHL production rate&~$\beta$&~$45000$&~$-$\\
  AHL decay rate&~$\psi$&~$0.02$&~$-$\\
  \hline
  \multicolumn{3}{l}{\textsuperscript{}\footnotesize{Reference: RE=\cite{RE:2014}}}
\end{tabular}
\end{table}

Initially the system is inoculated by one semi-spherical colony placed at the center of the substratum on the bottom boundary.  The simulation is terminated when the signal concentration everywhere in the domain exceed switching threshold, i.e. the first time at which $s(t,x,y)\geq 1$ for all $(x,y)\in \Omega$.
 
We investigate how the domain size affects the time to induction. We keep the height of the domain constant at $H=1$ and change the length of the computational domain in the range $ 1 \leq  L \leq 2.5$. For each $L$ we measure the following lumped output parameters:

\begin{itemize}
\item{$T_1$:} The first time at which $s(t,x,y)\geq 1$ for some  $x\in\Omega$
\item{$T_2$:} The first time at which the average signal concentration in the colony exceeds switching threshold, i.e. when $\int_{\Omega_2}s(t,x,y)dxdy \geq \int_{\Omega_2}dxdy$
\item{$T_3$:} The first time at which $s(t,x,y)\geq 1$ for some $(x,y)\in \Omega_2(t)$
\item{$T_4$:} The first time at which $s(t,x,y)\geq 1$ for all $(x,y)\in\Omega$
\end{itemize}

We also compute the total biomass in the system and its value at each time $T_i,~i=1,...,4$ to describe the relation between biofilm density and quorum sensing induction. The value of total biomass and its value at each time $T_i,~i=1,...,4$ are defined as
\begin{eqnarray}
M(t)&=&\int_{\Omega}u(t,x,y)dxdy\nonumber\\
M_{\small{total},i} &=& M(T_i),\quad i=1,...,4,\label{533}
\end{eqnarray}

The parameter values used in the simulations and their definition are shown in Table \ref{table531}. The internal tolerance is set in all simulations to $TOL=1e-7$, spatial discretization is such that $\Delta x =1/256$.

\subsection{Results}\label{illustration}

\begin{figure*}
\centering
\begin{tabular}{ c c  }
\includegraphics[width=0.48\textwidth]{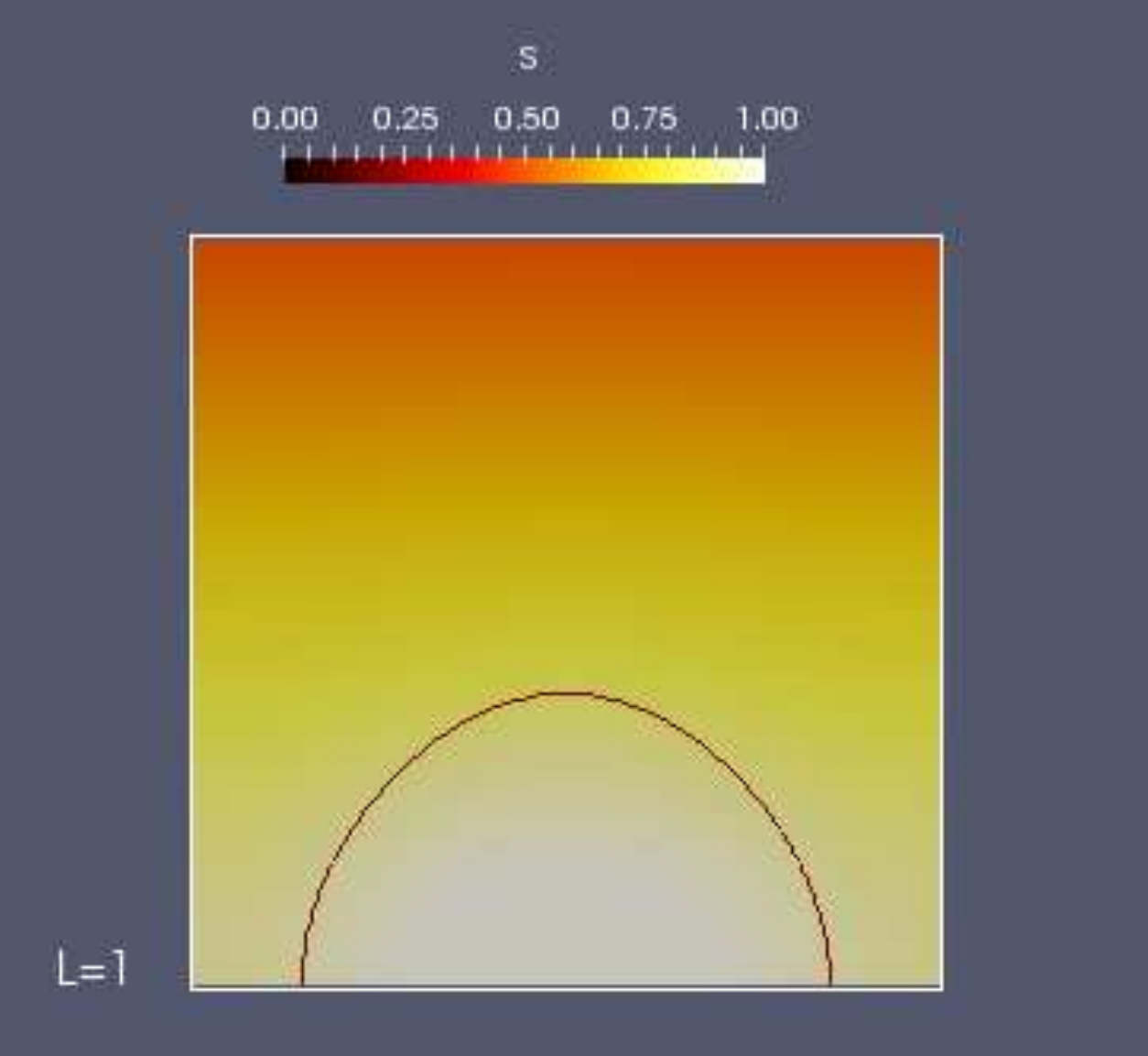}&\includegraphics[width=0.48\textwidth]{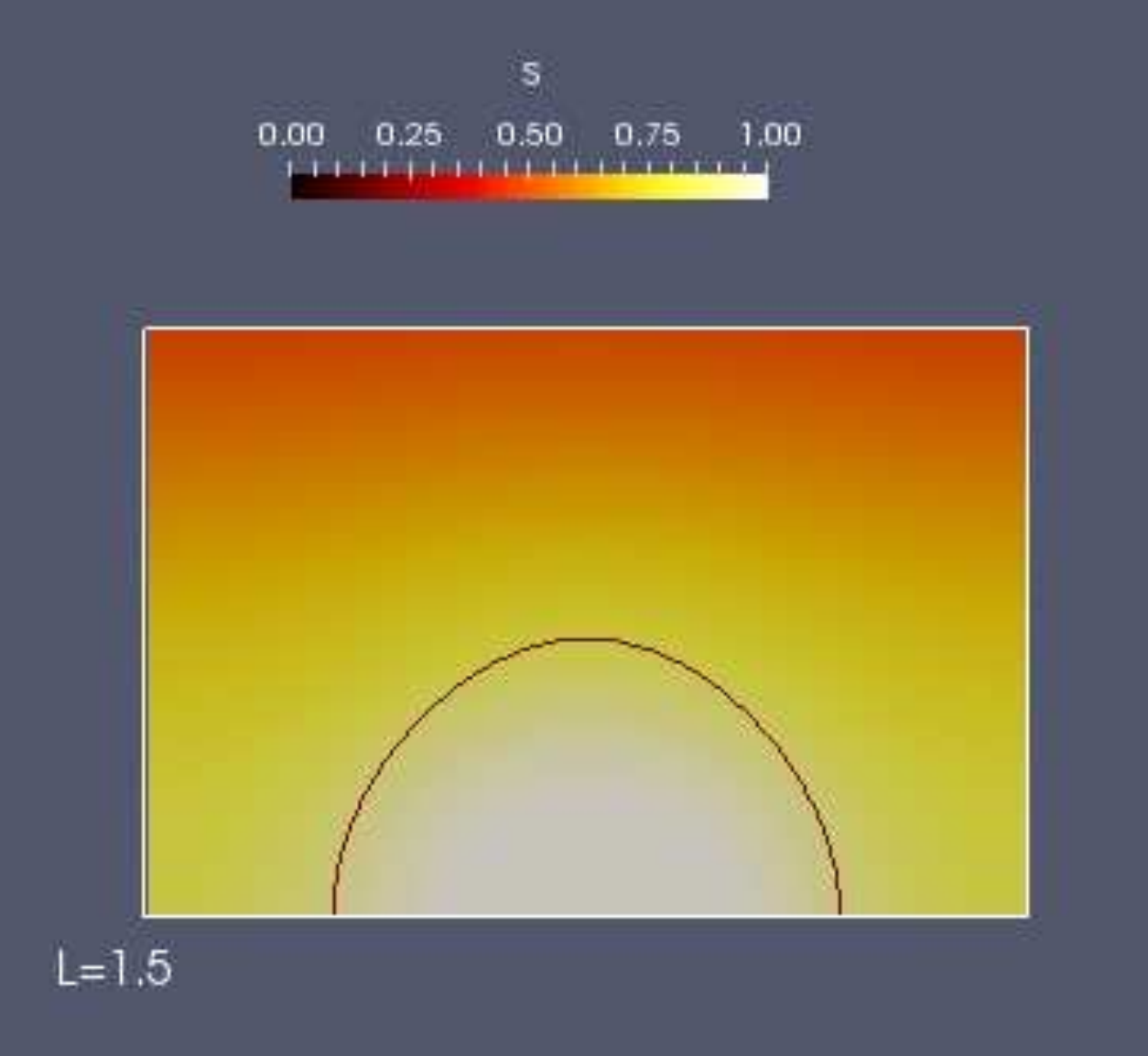}\\
\includegraphics[width=0.48\textwidth]{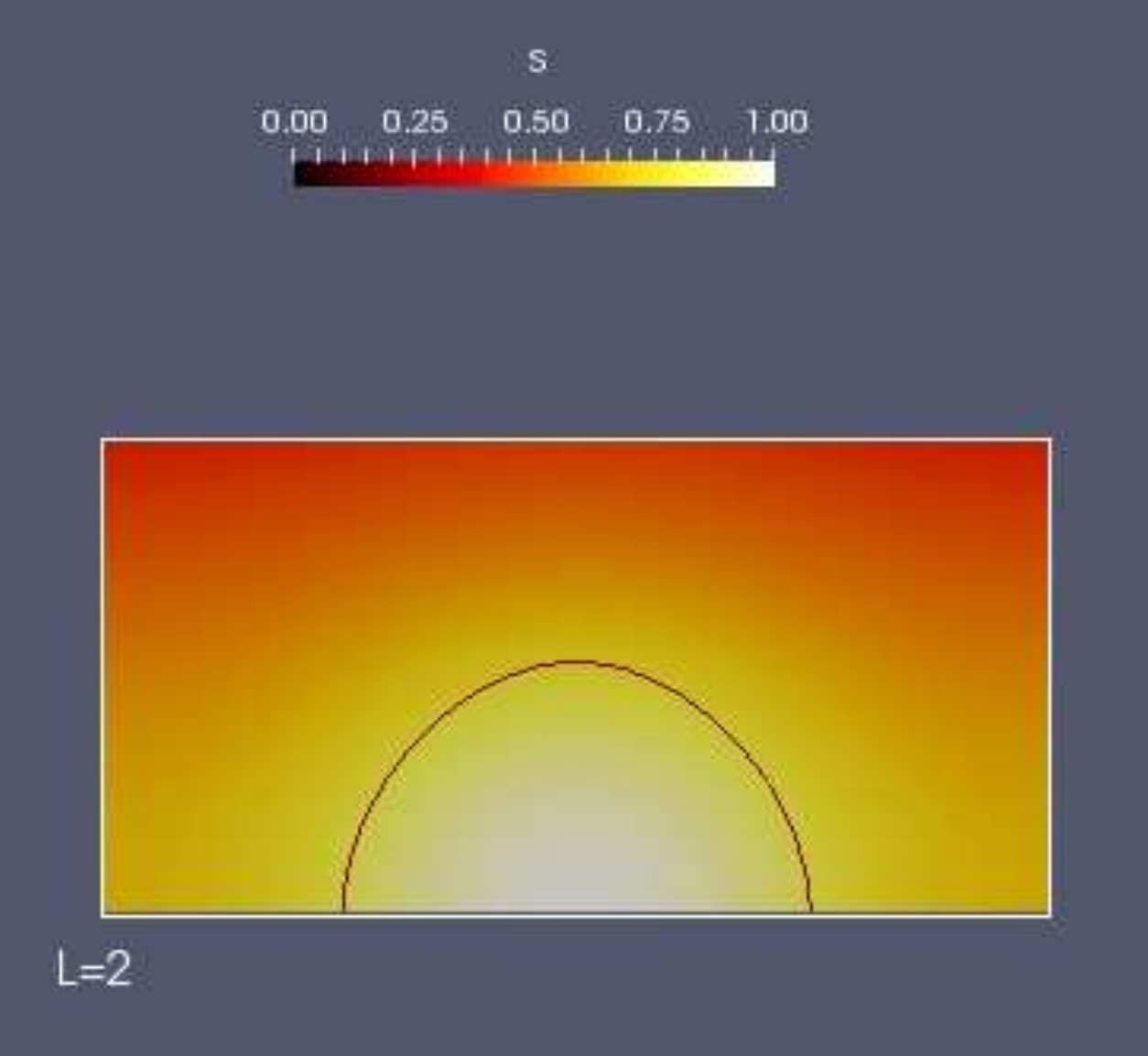}&\includegraphics[width=0.48\textwidth]{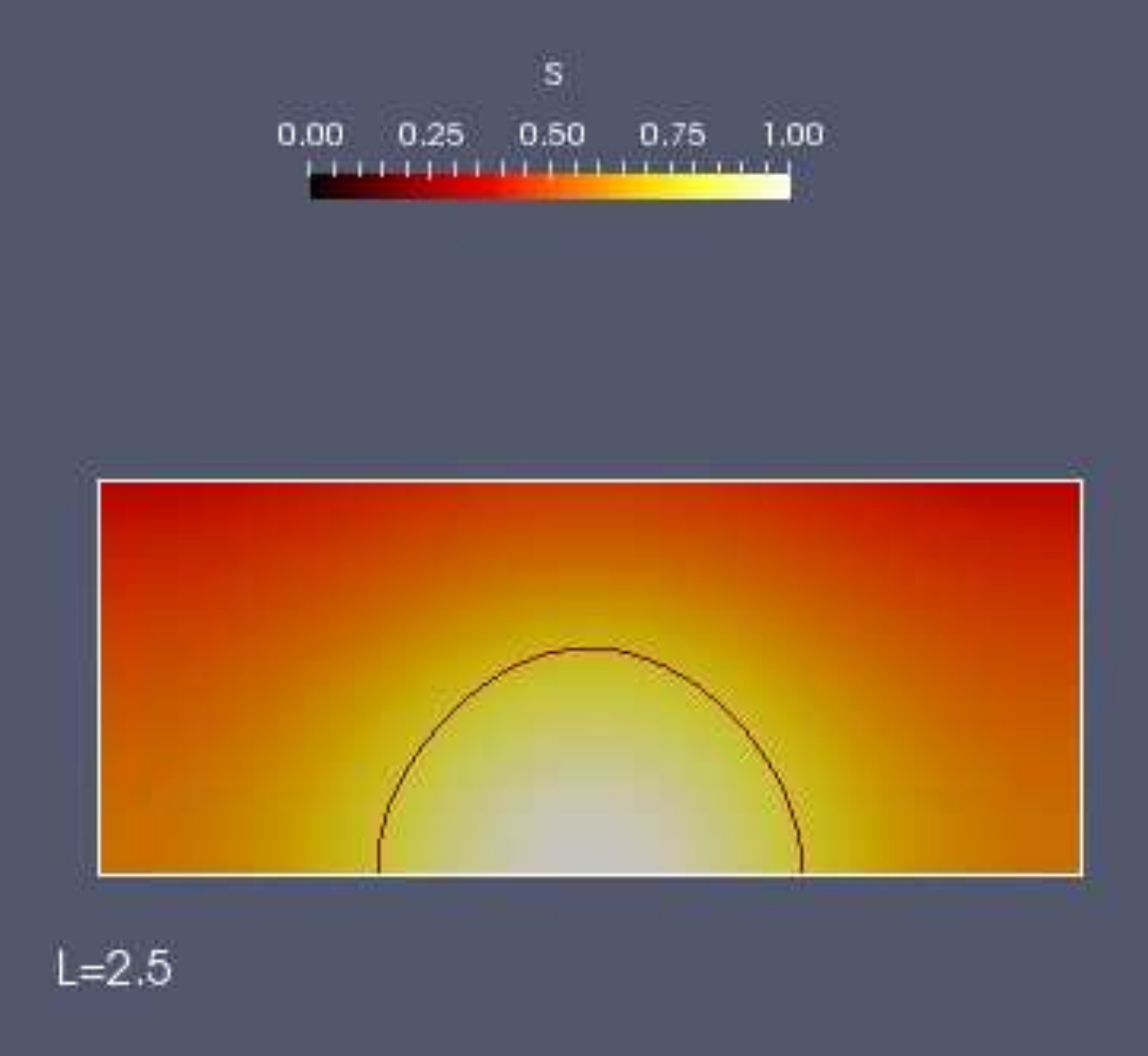}
\end{tabular}
\caption{AHL distribution in the system for different system sizes with various length at $t=T_1$. The biofilm interface is indicated in white.}
\label{figdiff2}
\end{figure*}

Figure \ref{figdiff2} shows the autoinducer concentration $s$ and the interface between biofilm and surrounding liquid at $T_1$ for four different system sizes with $L=1,1.5,2,2.5$. 
In all cases the biofilm colony remains approximately semi-spherical, but we observe that the larger $L$, the larger is the biofilm colony at the time that the
signal concentration reaches 1 first (see also Figure \ref{figdiff}.C). This is a consequence of increased substrate supply as the domain length is increased, and hence the length of the boundary through which nutrient is added.  The maximum signal concentration is attained in the center of the colony; it decreases from there toward the lateral boundaries of the domain and toward the top boundary. The concentration at the top boundary is lower, as this is where autoinducers are removed by diffusion due to the boundary conditions (\ref{532}). The larger the system length $L$, the steeper are the autoinducer gradients and the lower is the autoinducer concentration at the boundaries. This is a consequence of signals only being produced in the colony. For the smallest system length $L=1$ the autoinducer concentration appears stratified across the domain, whereas in the the highest systems length $L=2.5$ it appears to be much larger in the colony than in the aqueous phase.

\begin{figure*}[h!]
\centering
\begin{tabular}{ c  }
\includegraphics[width=0.55\textwidth]{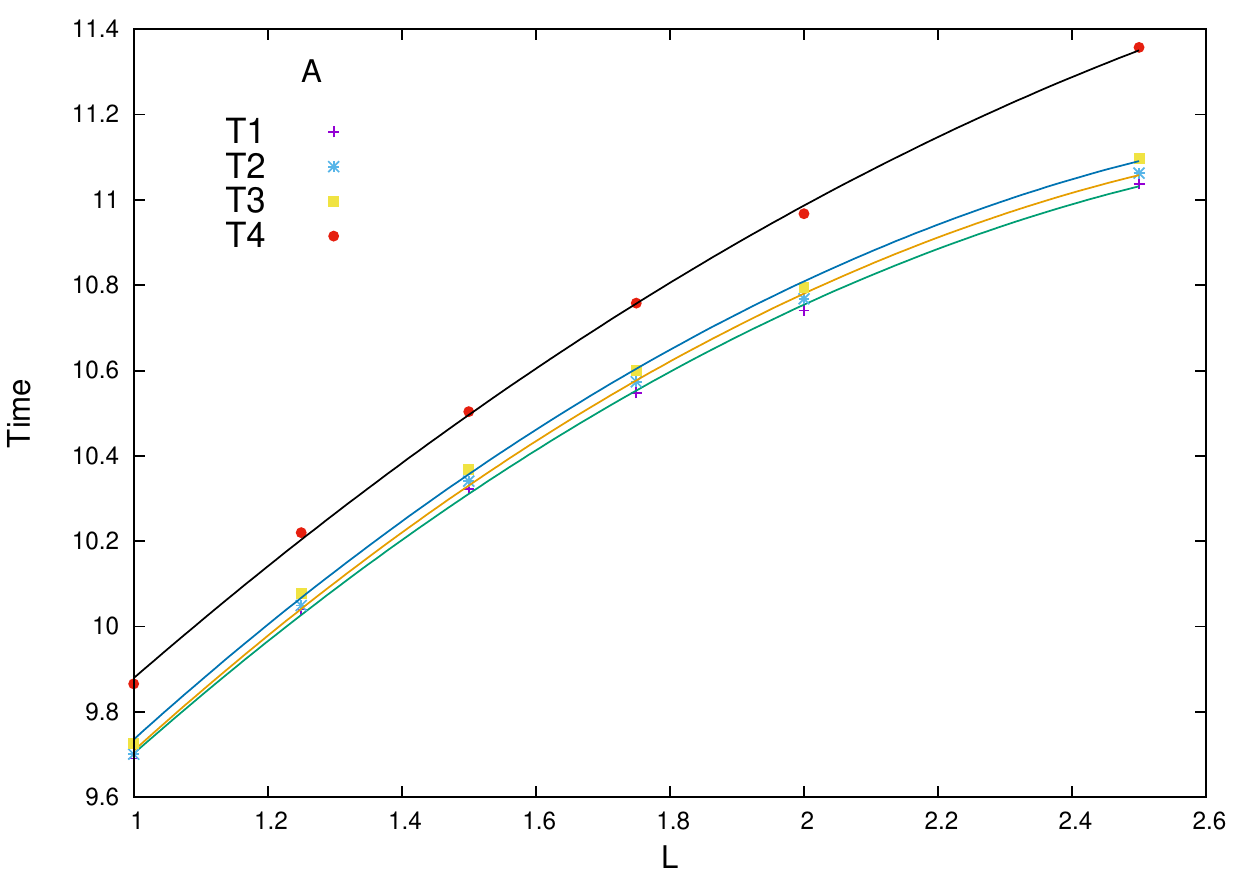}\\
\includegraphics[width=0.55\textwidth]{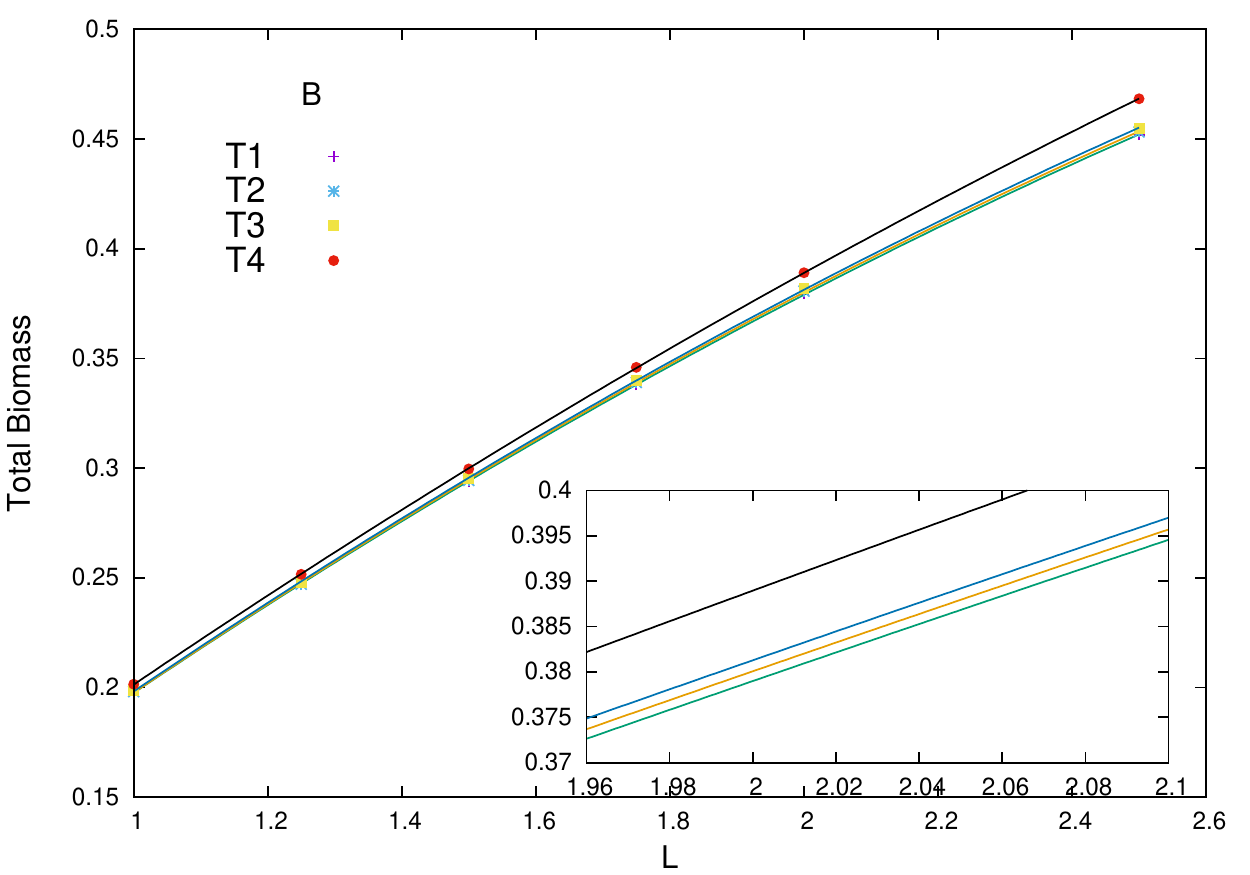}\\
\includegraphics[width=0.55\textwidth]{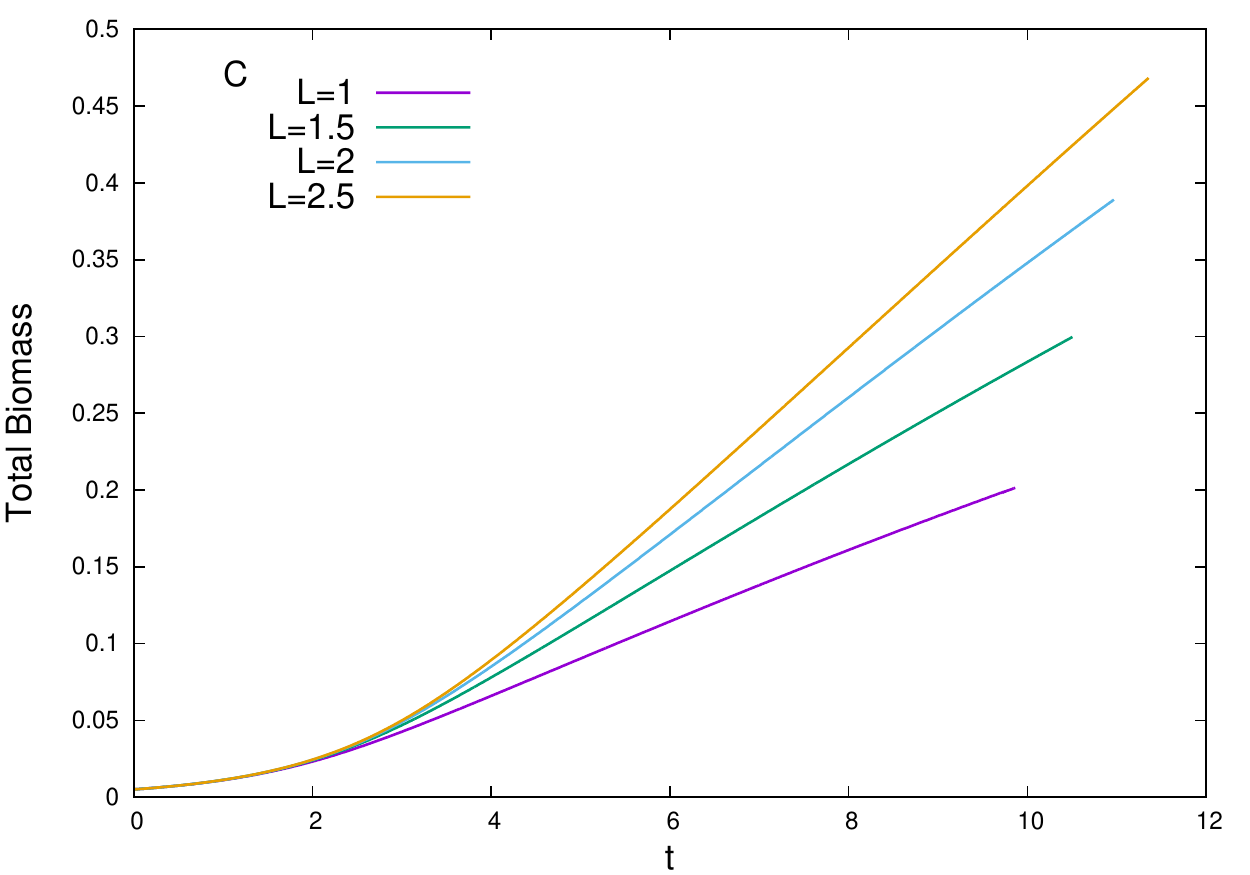}
\end{tabular}
\caption{\textbf{A}. The value of computed \textit{lumped} parameters $T_i,~i=1,...,4,$ vs lengths. \textbf{B}. The value of total biomass vs lengths at each defined time $T_i,~i=1,...,4$. \textbf{C}. Time course of total biomass for various lengths. Computational domain is discretised uniformly with $\triangle x=\frac{1}{256}$ and tolerance of ROW method is set at $TOL=1e-7$}
\label{figdiff}
\end{figure*}

\begin{table}
\centering
\caption{The values of the coefficients of fitted quadratic function for $T_i$ and $M_{total,i}$ and the error between the exact and approximated data.} \label{tablefitT}

\begin{tabular}{llllll}
 \hline
  $T_i(L)$ &\qquad$\alpha_1$&\qquad$\beta_1$&\qquad$\gamma_1$&\qquad$RMSError_T$\\
\hline\\
  $T_1(L)$&~$-0.331332$&~$2.04645$&~$7.98685$&~\quad$0.014949$\\
  $T_2(L)$&~$-0.343759$&~$2.10267$&~$7.95014$&~\quad$0.011887$\\
  $T_3(L)$&~$-0.339661$&~$2.09279$&~$7.98219$&~\quad$0.0138615$\\
  $T_4(L)$&~$-0.253235$&~$1.86745 $&~$8.26481$&~\quad$0.0175622$\\
  \hline
  \end{tabular}
\vskip1truecm
\begin{tabular}{llllll}
 \hline
  $M_{\small{total},i}(L)$ &~\qquad$\alpha_2$&~\qquad$\beta_2$&~\qquad$\gamma_2$&~$RMSError_M$\\
\hline\\
  $M_{\small{total},1}(L)$&~$-0.0234576$&~$0.251837$&~$-0.0308511$&~$0.000486687$\\
  $M_{\small{total},2}(L)$&~$-0.0237583$&~$0.253714$&~$-0.0323236$&~$0.000612262$\\
  $M_{\small{total},3}(L)$&~$-0.0235238$&~$0.253647$&~$-0.0319177$&~$0.000541766$\\
  $M_{\small{total},4}(L)$&~$-0.0191422 $&~$0.24508$&~$-0.0246319$&~$0.000274989$\\
  \hline
  \end{tabular}
\end{table} 

The value of parameters $T_i,~i=1,...,4$ and also the value  $M_{total,i}$ of total biomass at each $T_i$  for computational domains with length $L = 1, 1.25, 1.5, 1.75, 2, 2.5$ are shown in Figure \ref{figdiff}.   We observe in Figure \ref{figdiff}.A that increasing the length of the computational domain increases the value of $T_i$'s at which QS induction occurs. In fact, by increasing $L$ more autoinducers transport away from the producer. Hence, more biomass is needed to have induction, therefore the time at which QS induction occurs increases. We also observe that times $T_i,~i=1,2,3$ are very similar but there is a considerable difference between them and $T_4 $ especially in the larger domains. 

The values of total biomass at each time $T_i, i=1,...,4$ are shown in Figure \ref{figdiff}.B. For each time $T_i,~i=1,...,4$, increasing the length of the domain increases the total amount of biomass. Increasing the domain length increases mass transfer thus more biomass is needed for differentiation. Mirroring the differences between $T_{1,2,3}$ and $T_4$, Figure \ref{figdiff}.B shows considerable difference between total biomass at $T_i,~i=1,2,3$ and its value at $T_4$.  
The time course of total biomass $M(t)$ for $L=1,1.5,2,2.5$ is plotted in Figure \ref{figdiff}.C to show the effect of the length of the domain on the required biomass for QS induction vs time. We observe that initially the values of total biomass in all four systems are the same and deviation emerges at $T\simeq 2.2$. We also observe that $M(t)$ in the system with $L=2.5$ has the maximum value indicating the maximum mass transfer occurs in the system with largest domain. Furthermore, it can be seen that the value of $T_4$ which is in fact the time at which simulation stops is increasing by increasing the length of computational domain as we have shown in Figure \ref{figdiff}.A. 

The simulation results in Figure \ref{figdiff}.A,B, suggest that  $T_i$'s and $M_{total,i}$ change with respect to the system length quadratically. This is confirmed by fitting a quadratic function to our data points,  $T_{i}(L)=\alpha_1 L^2+\beta_1 L+\gamma_1$ and $M_{\small{total},i}(L)=\alpha_2 L^2+\beta_2 L+\gamma_2$, which are also included in the plots. 
The coefficients $\alpha_1,~\beta_1,~\gamma_1$ and $\alpha_2,~\beta_2,~\gamma_2$  are given in Table \ref{tablefitT}. Also given is the root mean squared error defined as $RMSError_T=\sqrt{\frac{\sum_{i=1}^{n}(\hat{T}_i-T_i)^{2}}{n}}$ for $T_i$ and $RMSError_M=\sqrt{\frac{\sum_{i=1}^{n}(\hat{M}_i-M_i)^{2}}{n}}$ for $M_{total,i}$ in which $n$ is the number of collected data and $\hat{T}_i$ and $\hat{M}_i$ are the predicted values of $T_i$ and $M_{total,i}$. We observe that $RMSError_M < RMSError_T$ indicating the error between the predicted value of $M_{\small{total},i}$ and obtained data is less than the error between the predicted and real values of $T_i$.

\subsection{Discussion of the results of QS induction in isolated colonies}

In agreement with earlier studies \cite{CH:2003,FKH:2011,Ward:2001} our results show that upregulation is rapid. In our simulations, the time between the signal concentration first reaching the switching threshold somewhere in the colony and until the switching threshold is exceeded in the colony everywhere is very short, about 1\% of the induction time.

Our simulations suggest that the environment and the physical conditions there affect the time to induction for a single isolated colony. This is to  a large extent determined by mass transfer of signals from the colony into the surrounding aqueous phase, and transport of signal molecules there. In our simulations we have neglected abiotic decay of signal molecules in the aqueous phase, but it is to conjecture that including such an effect would exacerbate this phenomenon. This suggests that upregulation of an individual colony is not a mere function of colony size, in accordance with \cite{HKM:2007}.  A related, but not directly comparable study in this regard is \cite{CH:2003}, where a one-dimensional model was used to determine the minimum thickness required for a homogeneous biofilm layer to induce. In that study the aqueous phase was not explicitly considered, but the diffusive flux of signal molecules from the colony in direction of the bulk phase is specified as a boundary condition. This, in our setting would correspond to changing the depth of the cavity, which we kept constant. In contrast to this, we varied the lateral extension of the domain, showing that also horizontal mass transfer, orthogonal to the main diffusion direction matters.  The quadratic dependence of induction time and biomass on the system size also suggest that diffusion is a dominant factor in the process.

On the other hand, if we understand the lateral homogeneous Neumann conditions as symmetry conditions instead of as no-flux conditions, then we can interpret our system also as an infinite array of equidistantly spaced identical colonies. Our system then models a biofilm community consisting of many individual colonies. System length $L$ is then the distance between the centers of 2 consecutive colonies, i.e. the smaller $L$, the more biomass in the biofilm. From the viewpoint of the entire biofilm community our results confirm the traditional interpretation of quorum sensing, i.e. induction takes place when the population reaches a certain size. We can reconcile this with the findings discussed above by the observation that the horizontal signal fluxes between two neighbouring colonies cancel each other, i.e. the net flux between neighbouring colonies is zero. Again this confirms the important role of signal diffusion.


\bigskip

\section{Conclusion}\label{conclusion}

Our primary objective was to study a time-adaptive, error-controlled  numerical solution strategy for a non-linear degenerate diffusion system that arises in biofilm modelling. This system has two non-linear effects (i) porous medium degeneracy for $u=0$ and (ii) super-diffusion singularity for $u=1$, which  make this problem difficult to treat numerically and analytically. 

Biologically relevant solutions of this system are characterized by an interface that propagates with finite speed, along which the biomass gradient blows up. 
Due to the low reglarity of these solutions, low order time intergation methods have been used previously for the time integration of the semi-discrete system that is obtained after spatial discretisation. These methods mostly have been explicit or semi-implicit, with small enough fixed time step to avoid numerical difficulties (e.g. overshooting of the singularity), and without error control capabilites.
The important question that we answered is whether we can also use a higher order time integration method to solve the semi-discrete approximation of this highly non-linear PDE.
By regularisation we showed that the spatially discretised problem indeed satisfies a Lipschitz condition, i.e. has classical smooth solutions, which never reach the singular point.  This allows the application of time-adaptive, error controlled time integration techniques, such as embedded Rosenbrock-Wanner methods, to simulate these highly nonlinear problems with singular and degenrate diffusion.

To demonstrate the utility of the numerical method we carried out a simulation experiment of quorum sensing induction of a biofilm colony in a protected niche. We found that induction time does not depend only on colony size, but also on the properties of the environment that affect transport of signal molecules, e.g niche size in our case.

\textbf{Acknowledgements}
This study has been financially supported by the Natural Science and Engineering Research Council of Canada (NSERC): MG holds a PGS-D graduate scholarship, HJE a Discovery Grant, the
equipment was purchased with a Research Tools and Infrastructure Grant (HJE).


\end{document}